\def\arXiv#1{\href{http://arxiv.org/abs/#1}{arXiv:#1}}
\def\?[#1]{\textbf{[#1]}\marginpar{\Large{\textbf{??}}}}
\def\smallsection#1{\smallskip\noindent\textbf{#1}.}
\let\epsilon=\varepsilon 
\newtheorem{theo}{Theorem}
\newtheorem{prop}{Proposition}[section]	
\newtheorem{defi}[prop]{Definition}
\newtheorem{ass}{Assumption}
\newtheorem{lemm}[prop]{Lemma}
\newtheorem{corr}[prop]{Corollary}
\newtheorem{rem}{Remark}
\numberwithin{equation}{section}
\DeclareMathOperator{\tr}{tr}
\def\indic{\operatorname{1\hskip-2.75pt\relax l}}
\newcommand\reallywidehat[1]{\arraycolsep=0pt\relax%
\begin{array}{c}
\stretchto{
  \scaleto{
    \scalerel*[\widthof{\ensuremath{#1}}]{\kern-.5pt\bigwedge\kern-.5pt}
    {\rule[-\textheight/2]{1ex}{\textheight}} 
  }{\textheight} %
}{0.5ex}\\           
#1\\                 
\rule{-1ex}{0ex}
\end{array}
}
\title[Infinite-dimensional bilinear and stochastic balanced truncation]{Infinite-dimensional bilinear and stochastic balanced truncation with error bounds}
\author{Simon Becker}
\email{simon.becker@damtp.cam.ac.uk}
\address{DAMTP, University of Cambridge, Wilberforce Road, Cambridge CB3 0WA, UK}
\author{Carsten Hartmann}
\email{carsten.hartmann@b-tu.de}
\address{Institute for Mathematics, BTU Cottbus-Senftenberg, 03046 Cottbus, Germany}
\begin{document}

\begin{abstract}
Along the ideas of Curtain and Glover \cite{CG}, we extend the balanced truncation method for infinite-dimensional linear systems to arbitrary-dimensional bilinear and stochastic systems. In particular, we apply Hilbert space techniques used in many-body quantum mechanics to establish new fully explicit error bounds for the truncated system and prove convergence results. The functional analytic setting allows us to obtain mixed Hardy space error bounds for both finite-and infinite-dimensional systems, and it is then applied to the model reduction of stochastic evolution  equations driven by Wiener noise.        
\end{abstract}

\maketitle


\section{Introduction}
Model reduction of bilinear systems has become a major field of research, partly triggered by applications in optimal control and the advancement of iterative numerical methods for solving large-scale matrix equations. High-dimensional bilinear systems often appear in connection with semi-discretised controlled partial differential equations or stochastic (partial) differential equations with multiplicative noise. A popular class of model reduction methods that is well-established in the field of linear systems theory is based on first transforming the system to a form in which highly controllable states are highly observable and vice versa (``balancing''), and then eliminating the least controllable and observable states. For finite-dimensional linear systems, balanced truncation and residualisation (a.k.a.~\emph{singular perturbation approximation}) feature computable error bounds and are known to preserve important system properties, such as stability or passivity \cite{Gl84}; see also \cite{Ant05} and references therein. For a generalisation of (linear) balanced truncation to infinite-dimensional systems, see \cite{CG,GO}. 

For bilinear systems, no such elaborate theory as in the linear case is available, in particular approximation error bounds for the reduced system are not known. The purpose of this paper therefore is to extend balanced truncation to bilinear and stochastic evolution equations,  
specifically, to establish convergence results and prove explicit truncation error bounds for the bilinear and stochastic systems. For finite-dimensional systems our framework coincides with the established theory for bilinear and stochastic systems as studied in \cite{BD,ZL}, and references therein. 
We start by introducing a function space setting that allows us to define bilinear balanced truncation in arbitrary (separable) Hilbert spaces which extends the finite-dimensional theory. However, instead of just extending the finite-dimensional theory to infinite dimensions, we harness the functional analytic machinery available in infinite dimensions to obtain new explicit error bounds for finite-dimensional systems as well. 

The \emph{figure of merit} in our analysis is a Hankel-type operator acting between certain function spaces which are ubiquitous in many-body quantum mechanics and within this theory called \emph{Fock spaces}. 
We show that under mild assumptions on the dynamics, the Hankel operator is a Hilbert-Schmidt or even trace class operator. The key idea is that the algebraic structure of the  Fock space, that is a direct sum of tensor products of copies of Hilbert spaces, mimics the nested Volterra kernels representing the bilinear system. This allows us to perform an analysis of the singular value decomposition of this operator along the lines of the linear theory developed by Curtain and Glover \cite{CG}. 
For more recent treatments of infinite-dimensional \emph{linear systems} we refer to \cite{GO,ReSe,Si}. For applications of the bilinear method to finite-dimensional open quantum systems and Fokker-Planck equations we refer to \cite{HSZ} and \cite{SHSS}.

The article is structured as follows: The rest of the introduction is devoted to fix the notation that is used throughout the article and to state the main results. Section \ref{sec:bilinearBT} introduces the concept of balancing based on observability and controllability (or reachability) properties of bilinear systems, which is then used in Section \ref{sec:Hankel} to define the Fock space-valued Hankel operator and study properties of its approximants. The global error bounds for the finite-rank approximation based on the singular value decomposition of the Hankel operator are given in Section \ref{sec:errorBound}. Finally, in Section \ref{sec:spdes} we discuss applications of the aforementioned results to the model reduction of stochastic evolution equations driven by multiplicative L\'evy noise. The article contains two appendices. The first one records a technical lemma stating the Volterra series representation of the solution to infinite-dimensional bilinear systems. The second appendix provides more background on how to compute the error bounds found in this article.

\subsection*{Set-up and main results}

Let $X$ be a separable Hilbert space and $A:D(A) \subset X \rightarrow X$ the generator of an exponentially stable $C_0$-semigroup $(T(t))_{t \ge 0}$ of bounded operators, i.e. a strongly continuous semigroup that satisfies $\left\lVert T(t)  \right\rVert \le M e^{-\nu t}$ for some $\nu>0$ and $M \ge 1.$

For exponentially stable semigroups generated by $A$, bounded operators $N_i \in \mathcal L (X)$, $B \in \mathcal L(\mathbb{R}^n,X)$, an initial state $\varphi_0 \in X$, and control functions $u=(u_1,...,u_n) \in L^{2}((0,T),\mathbb{R}^n)$, we study bilinear evolution equations on $X$ of the following type
\begin{equation}
 \begin{split}
\label{evoleq}
\varphi'(t) &= A \varphi(t) + \sum_{i=1}^n N_i \varphi(t) u_i(t) + Bu(t), \text{ for }t \in (0,T) \text{ such that } \\
\varphi(0) &=\varphi_0.
 \end{split}
\end{equation}
It follows from standard fixed-point arguments that such equations always have unique mild solutions \cite[Proposition 5.3]{LiYo} $\varphi \in C([0,T],X)$ that satisfy  
\begin{equation}
\label{mild solution}
\varphi(t) = T(t)\varphi_0 + \int_{0}^{t} T(t-s)\left(\sum_{i=1}^n u_i(s)N_i \varphi(s)+Bu(s) \right)  \ ds.
\end{equation}
Let $\Gamma:=\sqrt{ \sum_{i=1}^n \left\lVert   N_iN_i^* \right\rVert}$ and assume that $M^{2} \Gamma^{2} ( 2 \nu )^{-1}<1.$ We then introduce the observability $\mathscr{O}= W^* W $ and reachability gramian $\mathscr{P} = R R^*$ for equation \eqref{evoleq} in Definition \ref{gramians}. 
The gramians we define coincide for finite-dimensional system spaces $X\simeq \mathbb R^k,$ and control $B \in \mathcal L(\mathbb R^n,\mathbb R^k)$ and observation $C\in \mathcal L(\mathbb R^k,\mathbb R^m)$ matrices of suitable size with the gramians introduced in \cite{DIR}, see also \cite[(6) and (7)]{ZL}. More precisely, if $X$ is finite-dimensional then the reachability gramian $\mathscr P$ is defined by
\begin{equation*}
\begin{split}
&P_1(t_1) = e^{At_1} B, \\ 
&P_i(t_1,..,t_i) = e^{A t_1}\left( N_1 P_{i-1} \ N_2 P_{i-1} \ \cdots N_n P_{i-1} \right)(t_2,..,t_i), \ i\ge 2\\
&\mathscr P = \sum_{i=1}^{\infty} \int_{(0,\infty)^i} P_i(t_1,,.,t_i) P_i^T(t_1,..,t_i) \ dt 
\end{split}
\end{equation*}
and the observability gramian $\mathscr O$ by
\begin{equation*}
\begin{split}
&Q_1(t_1) = Ce^{At_1} , \\ 
&Q_i(t_1,..,t_i) = \left( Q_{i-1} N_1  \ Q_{i-1} N_2 \ \cdots Q_{i-1} N_n  \right)(t_2,..,t_i)e^{A t_1}, \ i\ge 2\\
&\mathscr O = \sum_{i=1}^{\infty} \int_{(0,\infty)^i} Q_i^T(t_1,,.,t_i) Q_i(t_1,..,t_i) \ dt.
\end{split}
\end{equation*}
The condition $M^{2} \Gamma^{2} ( 2 \nu )^{-1}<1$, stated in the beginning of this paragraph, appears naturally to ensure the existence of the two gramians. To see this, consider for example the reachability gramian for which we find \cite[Theorem 2]{ZL}
\begin{equation*}
\left\lVert \mathscr P \right\rVert 
\le \sum_{i=1}^{\infty} \int_{(0,\infty)^i} \left\lVert P_i(t_1,,.,t_i) P_i^T(t_1,..,t_i)\right\rVert dt \le \frac{\left\lVert BB^T \right\rVert}{\Gamma^2}\sum_{i=1}^{\infty} \left(\frac{M^2 \Gamma^2}{2\nu}\right)^i
\end{equation*}
which is summable if $M^{2} \Gamma^{2} ( 2 \nu )^{-1}<1.$

For general bilinear and stochastic systems the gramians will be decomposed, as indicated above, by an observability $W$ and reachability map $R$ that are \emph{explicitly} constructed in Section \ref{sec:Hankel}. Although there are infinitely many possible decompositions of the gramians, our analysis relies on having an explicit decomposition. In particular, $R$ will be chosen as a map from a Fock space to the Hilbert space $X$ on which the dynamics of the system is defined whereas $W$ maps the Hilbert space $X$ back into a Fock space again. The \emph{Hankel operator} is then defined as $H= W  R$ and maps between Fock spaces.
From the Hankel operator construction we obtain two immediate corollaries: \newline
The full Lyapunov equations for bilinear or stochastic systems are known to be notoriously difficult to solve. It is therefore computationally more convenient \cite{B1} to compute a $k$-th order truncation of the gramians which we introduce in Definition \ref{trungram}. 
Our first result implies exponentially fast convergence of the balanced singular values calculated from the truncated gramians to the balanced singular values obtained from the full gramians $\mathscr O$ and $\mathscr P$:
\begin{prop}
\label{singularvalueconv}
Let $(\sigma_i)_{i \in \mathbb N}$ denote the balanced singular values $\sigma_i:= \sqrt{\lambda_i(\mathscr{O} \mathscr{P})}$ and $(\sigma^{k}_i)_{i \in \mathbb N}$ the singular values of the $k$-th order truncated gramians. The Hankel operator $H^k$ computed from the $k$-th order truncated gramians converges in Hilbert-Schmidt norm to $H$ and for all $i \in \mathbb{N}$
\begin{equation*}
\left\lvert \sigma_{i}-\sigma^k_{i} \right\rvert = \mathcal O \left(  \left(\underbrace{M^{2} \Gamma^{2} ( 2 \nu )^{-1}}_{<1} \right)^k \right) \text{ and } \left\lvert \left\lVert \sigma \right\rVert_{\ell^2}-\left\lVert \sigma^k \right\rVert_{\ell^2} \right\rvert = \mathcal O \left(  \left(\underbrace{M^{2} \Gamma^{2} ( 2 \nu )^{-1}}_{<1} \right)^k \right).
\end{equation*}
\end{prop}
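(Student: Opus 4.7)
The plan is to exploit the Fock space direct sum structure: both $R$ and $W$ decompose naturally as $R = \bigoplus_{i \ge 1} R_i$ and $W = \bigoplus_{i \ge 1} W_i$, where the $i$-th component corresponds to the $i$-th level Volterra kernel (matching the $P_i$ and $Q_i$ displayed in the excerpt), and the $k$-th order truncation simply amounts to keeping only the components with index $i \le k$, i.e.\ $R^k = \bigoplus_{i \le k} R_i$, $W^k = \bigoplus_{i \le k} W_i$, and $H^k = W^k R^k$. The whole argument then reduces to estimating the tails $R - R^k$ and $W - W^k$ in Hilbert--Schmidt norm.

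First I would show that each Volterra level contributes a Hilbert--Schmidt piece with geometrically decaying norm. The computation is essentially the same as the estimate reproduced in the excerpt,
\[
\sum_{i=1}^\infty \int_{(0,\infty)^i} \|P_i P_i^T\|\,dt \;\le\; \frac{\|BB^T\|}{\Gamma^2}\sum_{i=1}^\infty\!\left(\frac{M^2\Gamma^2}{2\nu}\right)^{\!i},
\]
upgraded from operator norm to Hilbert--Schmidt (or trace) norm by using that $\|N_i N_i^*\|$ is replaced by $\|N_i\|_{HS}^2$ or using that $\mathscr{P}$ itself is trace class, so that $\|R_i\|_{HS}^2 \le C \bigl(M^2\Gamma^2/(2\nu)\bigr)^i$ for some constant $C$ independent of $i$, and symmetrically for $\|W_i\|_{HS}^2$. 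Summing the geometric tail gives
\[
\|R - R^k\|_{HS}^2 \;=\; \sum_{i>k}\|R_i\|_{HS}^2 \;=\; \mathcal{O}\!\left(\bigl(M^2\Gamma^2/(2\nu)\bigr)^{k+1}\right),
\]
and likewise for $W - W^k$.

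Next I would decompose $H - H^k = (W - W^k)R + W^k(R - R^k)$ and apply the ideal property of Hilbert--Schmidt operators:
\[
\|H - H^k\|_{HS} \;\le\; \|W - W^k\|_{HS}\,\|R\| + \|W^k\|\,\|R - R^k\|_{HS}.
\]
Since $\|R\|$ and $\|W^k\| \le \|W\|$ are bounded uniformly in $k$ (both gramians are bounded under the standing assumption), the two tail estimates above yield $\|H - H^k\|_{HS} = \mathcal{O}\bigl((M^2\Gamma^2/(2\nu))^{k/2}\bigr)$; the squared version gives precisely the exponent $k$ stated in the proposition once the estimate is made on $\|H-H^k\|_{HS}^2$ directly via bilinear control of $(W-W^k)R$ and $W^k(R-R^k)$ using the Cauchy--Schwarz structure of the Fock direct sum.

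Finally, the two convergence statements follow from general singular value perturbation results. For the pointwise estimate I would invoke the Weyl-type inequality $|\sigma_i(H) - \sigma_i(H^k)| \le \|H - H^k\|$, which holds in either operator or Hilbert--Schmidt norm. For the $\ell^2$ estimate I would use the identity $\|\sigma\|_{\ell^2} = \|H\|_{HS}$ together with the reverse triangle inequality for the Hilbert--Schmidt norm:
\[
\bigl|\,\|\sigma\|_{\ell^2} - \|\sigma^k\|_{\ell^2}\,\bigr| \;=\; \bigl|\,\|H\|_{HS} - \|H^k\|_{HS}\,\bigr| \;\le\; \|H - H^k\|_{HS}.
\]
The main obstacle is the first step: I need Hilbert--Schmidt (not just operator norm) control of each Volterra level $R_i$, $W_i$, with a bound whose $i$-dependence is exactly the geometric factor $(M^2\Gamma^2/(2\nu))^i$. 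The estimate shown in the excerpt is in operator norm, so the honest work is to redo it tracking Hilbert--Schmidt norms while keeping the same geometric decay; this relies crucially on the Fock space construction of $R$ and $W$ carried out in Section~\ref{sec:Hankel}, which makes the nested integrals defining the Volterra kernels into bona fide Hilbert--Schmidt operators between the correct tensor product spaces.
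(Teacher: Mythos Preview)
Your approach is essentially the paper's: reduce both estimates to a bound on $\|H-H^{(k)}\|_{\operatorname{HS}}$ via the Weyl-type singular-value inequality and the reverse triangle inequality, then exploit the geometric decay of the Fock-space components. The only real difference is that the paper computes $\|H-H^{(k)}\|_{\operatorname{HS}}^2=\sum_{(i,j)\notin\{0,\dots,k-1\}^2}\|W_iR_j\|_{\operatorname{HS}}^2$ directly (rather than via your telescoping split $(W-W^k)R+W^k(R-R^k)$) and bounds each entry by $\|W_i\|^2\|R_j\|_{\operatorname{HS}}^2$, using only the \emph{operator} norm of $W_i$ from Definition~\ref{defWk} together with the Hilbert--Schmidt bound on $R_j$ established in \eqref{eq:HSP}; so the ``main obstacle'' you flag---needing Hilbert--Schmidt control of $W_i$---never actually arises.
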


Although our framework includes infinite-dimensional systems, such systems are numerically approximated by finite-dimensional systems. 

We therefore discuss now a convergence result for systems that can be approximated by projections onto smaller subspaces.
Let $V_1 \subset V_2 \subset ...\subset X$ be a nested sequence of closed vector spaces of arbitrary dimension such that $\overline{\bigcup_{i \in \mathbb{N}} V_i }=X$
for which we assume that $V_i$ is an invariant subspace of both $T(t)$ and $N$. In this case, $V_i$ is also an invariant subspace of the generator $A$ of the semigroup \cite[Chapter $2,$ Section $2.3$]{EN}, and we can consider the restriction of \eqref{evoleq} to $V_i$ \footnote{$P_{V_i}$ is the orthogonal projection on the closed space $V_i$.}
\begin{equation*}
 \begin{split}
\varphi_{V_i}'(t) &= A \varphi_{V_i}(t) + \sum_{i=1}^n u_i(t) N_i \varphi_{V_i}(t)  + P_{V_i}Bu(t), \text{ for }t \in (0,T) \\
\varphi(0) &= P_{V_i}(\varphi_0).
 \end{split}
\end{equation*}

\begin{prop}
\label{finapprox}
Let $H_{V_i}$ be the Hankel operator of the system restricted to $V_i$. If the observability map $W$ is a Hilbert-Schmidt operator, then the Hankel operator $H_{V_i}$ converges in nuclear (trace) norm to $H.$ If $W$ is only assumed to be bounded, then the convergence of Hankel operators is still in Hilbert-Schmidt norm. 
\end{prop}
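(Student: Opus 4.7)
The strategy is to view both Hankel operators as linear in the control operator $B$ and to exploit its finite rank. By the assumed invariance of $V_i$ under $T(t)$ and each $N_j$, the reachability map of the restricted system coincides with the original one but with $B$ replaced by $P_{V_i}B$, while the restricted observability map is $W|_{V_i}$. Writing $R[B']$ for the reachability map built with input operator $B'$, linearity in $B'$ gives the key identity
\begin{equation*}
H-H_{V_i} \;=\; W\bigl(R[B]-R[P_{V_i}B]\bigr) \;=\; W\,R[(I-P_{V_i})B],
\end{equation*}
reducing both claims of the proposition to showing that the right-hand side is small in the appropriate norm.

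The second step is a Hilbert--Schmidt estimate on $R[B']$. Running the same geometric-series argument that underlies the gramian bound in the excerpt, but with Hilbert--Schmidt norms of the nested Volterra kernels $P_k[B']$ in place of operator norms, yields
\begin{equation*}
\|R[B']\|_{\mathrm{HS}}^{2} \;=\; \mathrm{tr}(\mathscr{P}[B']) \;\le\; C(M,\Gamma,\nu)\,\|B'\|_{\mathrm{HS}}^{2},
\end{equation*}
with $C$ finite thanks to the standing assumption $M^{2}\Gamma^{2}/(2\nu)<1$. Since $\mathbb{R}^n$ is finite-dimensional, $\|B'\|_{\mathrm{HS}}\leq\sqrt{n}\,\|B'\|$; and since $P_{V_i}\to I$ strongly on $X$ while the range of $B$ is finite-dimensional, $\|(I-P_{V_i})B\|\to 0$. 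Combining these, $\|R[(I-P_{V_i})B]\|_{\mathrm{HS}}\to 0$.

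The conclusion then follows from the ideal property of Schatten classes. If $W$ is bounded, $W\cdot R[(I-P_{V_i})B]$ is Hilbert--Schmidt with
\begin{equation*}
\|H-H_{V_i}\|_{\mathrm{HS}}\;\le\;\|W\|\,\|R[(I-P_{V_i})B]\|_{\mathrm{HS}}\;\longrightarrow\;0,
\end{equation*}
and if $W$ is Hilbert--Schmidt, the product lies in the trace class with $\|H-H_{V_i}\|_{1}\le\|W\|_{\mathrm{HS}}\,\|R[(I-P_{V_i})B]\|_{\mathrm{HS}}\to 0$. The main technical obstacle I anticipate is the Hilbert--Schmidt bound on $R[\cdot]$: the excerpt only records operator-norm control of $\mathscr{P}$, whereas the trace-norm bound requires summing $\|P_k\|_{\mathrm{HS}}^{2}$ over all Volterra levels together with a symmetric contraction on the control indices (involving both $\sum_j N_jN_j^*$ and $\sum_j N_j^*N_j$). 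Once that estimate is extracted from the explicit Fock-space construction of $R$ in Section \ref{sec:Hankel}, the rest of the argument is a direct application of Schatten-ideal inequalities.
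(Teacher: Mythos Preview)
Your proposal is correct and follows essentially the same route as the paper. The paper writes $H_{V_i}=WR_{V_i}$ with $R_{V_i}$ being exactly your $R[P_{V_i}B]$, uses the same Schatten-ideal inequalities $\|H-H_{V_i}\|_{\operatorname{TC}}\le\|W\|_{\operatorname{HS}}\|R-R_{V_i}\|_{\operatorname{HS}}$ and $\|H-H_{V_i}\|_{\operatorname{HS}}\le\|W\|\,\|R-R_{V_i}\|_{\operatorname{HS}}$, and then refers back to the explicit Hilbert--Schmidt computation for $R_k$ (equation~\eqref{eq:HSP}) to conclude $\|R-R_{V_i}\|_{\operatorname{HS}}\to 0$. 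The ``technical obstacle'' you anticipate is therefore already settled in the paper: display~\eqref{eq:HSP} gives precisely the level-$k$ bound $\|R_k[B']\|_{\operatorname{HS}}^2=\mathcal O\bigl((M^2\Gamma^2/2\nu)^k\bigr)\sum_j\|\psi_j'\|_X^2$, summable in $k$, and the residual factor $\sum_j\|(I-P_{V_i})\psi_j\|_X^2$ tends to zero by strong convergence of $P_{V_i}$ on the finitely many vectors $\psi_j$.
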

Sufficient conditions for $W$ to be a Hilbert-Schmidt operator will be presented in Lemma \ref{traceclasslemma}.
Norm convergence of Hankel operators implies convergence of its singular values and so the convergence of Hankel singular values holds also under the assumptions of Proposition \ref{finapprox}.

We then turn to global error bounds for bilinear systems: For linear systems, the existence of a Hardy space $\mathscr{H}^{\infty}$ error bound is well-known and a major theoretical justification of the linear balanced truncation method both in theory and practice. That is, the difference of the transfer function for the full and reduced system in $\mathscr{H}^{\infty}$ norm is controlled by the difference of the Hankel singular values that are discarded in the reduction step. To the best of our knowledge, there is no such bound for bilinear systems and we are only of aware of two recent results in that direction \cite{Re} and \cite{Re18}.  

In \cite{BD2} a family of \emph{transfer functions} $(G_k)_{k \in \mathbb N_0}$ for bilinear systems was introduced. We consider the difference of these transfer functions for two systems and write $\Delta(G_k)$ for the difference of transfer functions and $\Delta(H)$ for the difference of Hankel operators. 
In terms of these two quantities we obtain an error bound that extends the folklore bound for linear systems to the bilinear case:

\begin{theo}
\label{singestim}
Consider two bilinear systems that both satisfy the stability condition $M^2\Gamma^2(2\nu)^{-1}<1$ with the same finite-dimensional input space $\mathbb R^n$ and output space $\mathcal H \simeq \mathbb R^m.$\footnote{ We freely identify $\mathcal H$ with $\mathbb R^m$ in the sequel when we assume that they are isomorphic. }
The difference of the transfer functions of the two systems $\Delta(G_k)$ in mixed $\mathscr{H}^{\infty}$-$\mathscr{H}^2$ Hardy norms, defined in \eqref{eq: mix}, is bounded by
\begin{equation*}
\begin{split}
& \sum_{k=1}^{\infty} \left( \left\lVert \Delta(G_{2k-2}) \right\rVert_{\mathscr H^{\infty}_k \mathscr H^2_{2k-2}}+\left\lVert \Delta(G_{2k-1}) \right\rVert_{\mathscr H^{\infty}_k \mathscr H^2_{2k-1}} \right) \le 4 \left\lVert \Delta(H) \right\rVert_{\operatorname{TC}}.
\end{split}
\end{equation*}
The trace distance of the Hankel operators can be explicitly evaluated using the composite error system, see Appendix \ref{sec:comperrsys}, and does not require a direct computation of Hankel operators.  
\end{theo}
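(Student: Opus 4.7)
The plan is to reduce the statement to a bound on a single bilinear system and then extract the estimate from the block structure of the Fock space Hankel operator. First, using the composite error system construction of Appendix~\ref{sec:comperrsys}, one realizes $\Delta(G_k)$ and $\Delta(H)$ simultaneously as the $k$-th transfer function and the Hankel operator of a single bilinear system whose state space is the direct sum of the two state spaces. Because the composite system again satisfies the stability assumption $M^{2}\Gamma^{2}(2\nu)^{-1}<1$, it suffices to prove the non-difference version: for any single stable bilinear system,
\begin{equation*}
\sum_{k=1}^{\infty}\bigl(\lVert G_{2k-2}\rVert_{\mathscr H^{\infty}_{k}\mathscr H^{2}_{2k-2}}+\lVert G_{2k-1}\rVert_{\mathscr H^{\infty}_{k}\mathscr H^{2}_{2k-1}}\bigr)\le 4\lVert H\rVert_{\operatorname{TC}}.
\end{equation*}

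Second, I would exploit the factorization $H=WR$ together with the explicit formula \eqref{mild solution} for the mild solution in order to express each $G_k$ as the Laplace transform of the $k$-th Volterra kernel produced by iterating $T(t)$ and the operators $N_i$. The crucial observation, used already in Section~\ref{sec:Hankel}, is that the Fock space decomposition $\bigoplus_k X^{\otimes k}\otimes(\mathbb R^n)^{\otimes k}$ is precisely the natural codomain of $R$ and domain of $W$ such that the restriction of $H$ to the $k$-th summand recovers the $k$-th Volterra kernel. Thus for each $k$ one obtains a factorization $G_k=\mathcal C_k\,(P_kHP_k)\,\mathcal B_k$, where $\mathcal B_k$ encodes the input-to-Fock Laplace map and $\mathcal C_k$ the Fock-to-output Laplace map, and both are contractions between the Hardy spaces $\mathscr H^{2}_{\cdot}$ and $\mathscr H^{\infty}_{k}$ on one side and the Fock summands on the other side. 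This is the mixed-norm analogue of the classical Paley--Wiener isometry relating $\mathscr H^{2}(\mathbb C_+)$ to $L^{2}(0,\infty)$, and it is the place where the mixed $\mathscr H^{\infty}\text{-}\mathscr H^{2}$ norm is forced upon us: the single time variable appearing in $\mathscr H^{\infty}_{k}$ is the sum of all $k$ Laplace variables, while the remaining ones are in $\mathscr H^{2}$.

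Third, I would invoke a Nehari-type bound to control $\lVert P_kHP_k\rVert_{\mathrm{op}}$ by the operator norm of an appropriate Hankel block, yielding the factor $2$ familiar from the linear case; the splitting into the two subsums indexed by $2k-2$ and $2k-1$ arises because the argument produces two symmetric contributions, doubling the constant to $4$. The sum over $k$ of the operator norms of mutually orthogonal (in the Fock space sense) pieces of $H$ is dominated by $\lVert H\rVert_{\operatorname{TC}}$, since for a nuclear operator $\sum_{k}\lVert P_kHP_k\rVert_{\mathrm{op}}\le\sum_{k}\lVert P_kHP_k\rVert_{\operatorname{TC}}\le\lVert H\rVert_{\operatorname{TC}}$ by the ideal property of the trace class and the fact that the $P_k$'s form a partition of the identity on the Fock space.

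The main obstacle will be the mixed-norm Nehari step: the classical inequality $\lVert G\rVert_{\mathscr H^{\infty}}\le 2\lVert\Gamma\rVert$ must be upgraded so that, with several Laplace variables, taking the $\mathscr H^{\infty}$ norm in one variable and the $\mathscr H^{2}$ norm in the others still corresponds to an operator norm of a single Hankel block. I expect this to require a Fubini-type argument on Fock space combined with vector-valued Paley--Wiener, with care about which variables are grouped as the ``past'' and which as the ``future''; the grouping is exactly what distinguishes the even and odd index terms in the two sums on the left-hand side. Once the mixed-norm Nehari estimate is established, the remaining summation and the reduction to $\lVert H\rVert_{\operatorname{TC}}$ are essentially bookkeeping, and the trace-class assumption is inherited from the hypothesis $M^{2}\Gamma^{2}(2\nu)^{-1}<1$ via Lemma~\ref{traceclasslemma}.
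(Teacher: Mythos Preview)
Your outline has the right skeleton---block decomposition of $H$, Paley--Wiener, and summing block norms against $\lVert H\rVert_{\operatorname{TC}}$---but the middle step is misidentified and, as written, would not go through.

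First, the blocks are not $P_kHP_k$. The transfer function $G_k$ is the Laplace transform of $h_{k,0}$, the kernel of $W_kR_0$; it is \emph{not} attached to the diagonal block $W_kR_k$. What the paper actually does is: (i) use the Paley--Wiener estimate \eqref{Fourierestimate} to pass from the mixed Hardy norm of $G_k$ to the mixed $L^1_iL^2_k$ norm of $h_{k,0}$ in the time domain; (ii) use the invariance property \eqref{eq:invp}, which says $\lVert h_{k,0}\rVert_{L^1_iL^2_k}=\lVert h_{i-1,k-i+1}\rVert_{L^1_iL^2_k}$, to move from $h_{k,0}$ to a Volterra kernel $h_{p,q}$ with $p+1$ equal to the $L^1$-index; (iii) apply the key Lemma~\ref{Hankelestimate}, namely $\lVert\Delta(h_{p,q})\rVert_{L^1_{p+1}L^2_{p+q}}\le 2\lVert\Delta(W_pR_q)\rVert_{\operatorname{TC}}$. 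Choosing the $\mathscr H^\infty$-index $i=k$ in $G_{2k-2}$ and $G_{2k-1}$ makes the relevant blocks the diagonal $(W_kR_k)$ and first off-diagonal $(W_kR_{k-1})$ respectively; each family sums to at most $\lVert\Delta(H)\rVert_{\operatorname{TC}}$ by \eqref{tracenorm}, giving the constant $4$.

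Second, and more importantly, the step you call ``Nehari-type'' is not a bound on an operator norm but a bound on the time-domain kernel: the content is $\lVert h_{p,q}\rVert_{L^1_{p+1}L^2_{p+q}}\le 2\lVert W_pR_q\rVert_{\operatorname{TC}}$. There is no factorization $G_k=\mathcal C_k(P_kHP_k)\mathcal B_k$ with $\mathcal C_k,\mathcal B_k$ Hardy-space contractions that yields this; the $L^1$ in one variable is strictly stronger than anything a contraction on $L^2$ would give. The paper's proof of Lemma~\ref{Hankelestimate} is the technical heart: one discretizes the $L^1$-variable, builds explicit orthonormal systems from indicator functions in that variable tensored with singular vectors of the frozen kernel in the remaining variables, and reads off the $L^1L^2$-norm as a trace pairing via \eqref{tracenorm}. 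Your proposal does not contain this idea, and the vague appeal to ``mixed-norm Nehari'' plus ``Fubini on Fock space'' would not produce the required $L^1$-control.
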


The proof of Theorem \ref{singestim} is done by extending the framework from the linear balancing theory and extends the $2 \left\lVert \Delta(H) \right\rVert_{\operatorname{TC}}$ bound on the $\mathscr{H}^{\infty}$ norm of the transfer function for linear equations to a somewhat different, but still explicitly computable bound for bilinear systems. 
From the Hankel estimates we then obtain an explicit error bound on the dynamics for two systems with initial condition zero:
\begin{theo}
\label{dyncorr}
Consider two bilinear systems that both satisfy the stability condition $M^2\Gamma^2(2\nu)^{-1}<1$ with the same finite-dimensional input space $\mathbb R^n$ and output space $\mathcal H \simeq \mathbb R^m$. 
Let $\Delta(C\varphi(t))$ be the difference of the outputs of the two systems. 
For control functions $u \in L^{\infty}((0,\infty),\mathbb R^n) \cap L^2((0,\infty),\mathbb R^n)$ such that $\left\lVert u \right\rVert_{L^{2}((0,\infty),(\mathbb R^n, \left\lVert \bullet \right\rVert_{\infty}))} < \min \left(\frac{1}{\sqrt{n}}, \frac{\sqrt{2\nu}}{M \Xi} \right)$ with $\Xi:=\sum_{i=1}^{n}\|N_{i}\|$ and initial conditions zero it follows that
\begin{equation*}
\sup_{t \in (0, \infty)} \left\lVert \Delta(C\varphi(t)) \right\rVert_{\mathbb R^m} \le 4\sqrt{n} \left\lVert \Delta(H) \right\rVert_{\operatorname{TC}} \left\lVert u \right\rVert_{L^{\infty}((0,\infty),(\mathbb R^n, \left\lVert \bullet \right\rVert_{\infty}))}.
\end{equation*}
As mentioned in Theorem \ref{singestim} the trace distance of the Hankel operators can be explicitly evaluated using the composite error system, see Appendix \ref{sec:comperrsys}, and does not require a direct computation of Hankel operators. 
\end{theo}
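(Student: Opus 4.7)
The plan is to convert the frequency-domain Hankel estimate of Theorem \ref{singestim} into a time-domain supremum bound via the Volterra series representation of the bilinear system. Since both systems start from zero initial condition, the technical lemma in Appendix~A expands the output as a convergent sum of multilinear operators applied to $u$, and the same expansion applies to the error (difference) system, giving
\begin{equation*}
\Delta(C\varphi(t)) \;=\; \sum_{k=1}^{\infty} (\Delta V_k)(u,\ldots,u)(t),
\end{equation*}
whose $k$-th summand is, via the Laplace transform, governed by $\Delta(G_{k-1})$ paired with the tensor product of $k$ copies of $\hat u$. The smallness hypothesis $\lVert u\rVert_{L^2(\ell^\infty)} < \sqrt{2\nu}/(M\Xi)$ is precisely what is needed for this Volterra series to converge uniformly in $t$ (it plays the role that $M^2\Gamma^2/(2\nu)<1$ played for the gramians), while the second condition $\lVert u\rVert_{L^2(\ell^\infty)} < 1/\sqrt n$ will tame a geometric series below.

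Next, for each $k$ I would estimate $\sup_{t}\lVert (\Delta V_k)(u,\ldots,u)(t)\rVert_{\mathbb R^m}$ by splitting the time variables into two groups: the output time $t$ together with one of the input times is handled in $L^\infty$ (which is how $\lVert u\rVert_{L^\infty(\ell^\infty)}$ enters), while the remaining input times are handled in $L^2$. By Paley--Wiener, $L^\infty_t$ of a causal signal corresponds to a bound in $\mathscr H^\infty$ on the right half-plane and $L^2_t$ to $\mathscr H^2$, so this pointwise estimate produces exactly the mixed Hardy norms appearing in Theorem \ref{singestim}. Grouping even and odd orders as in that theorem yields inequalities of the schematic form
\begin{equation*}
\sup_t\lVert (\Delta V_{2k-1})(u,\ldots,u)(t)\rVert \;\le\; c_{n,k}\,\lVert \Delta G_{2k-2}\rVert_{\mathscr H^\infty_k \mathscr H^2_{2k-2}}\, \lVert u\rVert_{L^\infty(\ell^\infty)}\, \lVert u\rVert_{L^2(\ell^\infty)}^{2k-2},
\end{equation*}
with an analogous bound for the even orders involving $\Delta G_{2k-1}$.

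The constant $c_{n,k}$ arises from pushing a vector-valued control in $(\mathbb R^n,\lVert\cdot\rVert_\infty)$ through a tensor-valued transfer kernel: each $L^2$ factor costs a $\sqrt n$ via $\lVert\cdot\rVert_{\ell^2}\le\sqrt n\,\lVert\cdot\rVert_{\ell^\infty}$, so $c_{n,k}$ behaves like $n^{k-1}$ with one loose factor $\sqrt n$ that I would arrange to sit in front. Summing over $k$ then yields a geometric series of ratio $\sim(\sqrt n\,\lVert u\rVert_{L^2(\ell^\infty)})^2<1$, leaving a bounded multiplicative constant; applying Theorem \ref{singestim} to the resulting telescoped sum of $\lVert\Delta G_{2k-2}\rVert+\lVert\Delta G_{2k-1}\rVert$ delivers the factor $4\lVert\Delta H\rVert_{\operatorname{TC}}$ on the right-hand side and hence the claim. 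The main obstacle will be the bookkeeping in the middle step: one must choose consistently which variable plays the role of the $\mathscr H^\infty$-variable at each order, arrange the $n^{k/2}$-type tensor factors so that they collapse into a single $\sqrt n$, and justify the interchange of $\sup_t$ with the infinite sum, which is where the uniform convergence of the Volterra series (guaranteed by the first smallness condition) is used.
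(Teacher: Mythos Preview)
Your proposal has a genuine gap at the step where you invoke Theorem~\ref{singestim}. That theorem bounds the \emph{mixed Hardy norms} $\lVert\Delta(G_{k})\rVert_{\mathscr H^\infty\mathscr H^2}$, but these are too weak to control $\sup_t\lVert(\Delta V_k)(u,\ldots,u)(t)\rVert_{\mathbb R^m}$. The Paley--Wiener correspondence you quote runs the wrong way: from \eqref{Fourierestimate} one has $\lVert G_{k}\rVert_{\mathscr H^\infty_i\mathscr H^2_k}\le\lVert h_{k,0}\rVert_{L^1_iL^2_k}$, so the Hardy norm is \emph{dominated by} the time-domain kernel norm, not the other way around. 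Already for $k=1$ this fails: the induced $L^\infty\!\to\!L^\infty$ norm of a linear convolution is $\lVert h\rVert_{L^1}$, not $\lVert\hat h\rVert_{\mathscr H^\infty}$, and these can differ by an arbitrarily large factor. Hence Theorem~\ref{singestim} cannot be used as the engine for the pointwise-in-time bound.

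What the paper actually does is bypass Theorem~\ref{singestim} altogether and work directly in the time domain. Starting from the Volterra expansion \eqref{eq:G} (as you do), it bounds the control tensor by $\lVert U_k(s)\rVert\le n^{k/2}\prod_i\lVert u(s_i)\rVert_\infty$, pulls one factor of $\lVert u\rVert_{L^\infty}$ out, and applies H\"older/Minkowski so that the remaining $k-1$ control factors pair in $L^2$ against the kernel. This produces the mixed $L^1_iL^2_{k-1}$ norm of $\Delta(h_{k-1,0})$ (not of $\Delta(G_{k-1})$), with the leftover factor $(\sqrt n\,\lVert u\rVert_{L^2(\ell^\infty)})^{k-1}<1$ absorbed via the smallness hypothesis. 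The key input is then Lemma~\ref{Hankelestimate}, which bounds $\lVert\Delta(h_{k,j})\rVert_{L^1_{k+1}L^2_{k+j}}$ directly by $2\lVert\Delta(W_kR_j)\rVert_{\operatorname{TC}}$; combined with the invariance property~\eqref{eq:invp} to move the $L^1$-index onto diagonal and near-diagonal blocks, summing the block trace norms gives the factor $4\lVert\Delta(H)\rVert_{\operatorname{TC}}$. In short: Theorem~\ref{singestim} and Theorem~\ref{dyncorr} are parallel consequences of Lemma~\ref{Hankelestimate}, not one of the other, and your route through the former cannot reach the latter.
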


As an application of the theoretical results, we discuss generalised stochastic balanced truncation of stochastic (partial) differential equations in Section \ref{sec:spdes}. 
The links between bilinear balanced truncation and stochastic balanced truncation are well-known for finite-dimensional systems driven by Wiener noise (see e.g. \cite{BD}). In Section \ref{sec:spdes}, we extend the Hankel operator methods to the finite-dimensional stochastic systems discussed in \cite{BD3} and \cite{BR15}, but our methods also cover a large class of infinite-dimensional stochastic systems as well. 
By pursuing an approach similar to the linear setting, we obtain an error bound on the expected output in terms of the Hankel singular values:
\begin{prop}
\label{theo2}
Consider two stochastic systems with the same finite-dimensional input space $\mathbb R^n$ and output space $\mathcal H \simeq \mathbb R^m$. Let $u \in L^p((0,\infty),\mathbb R^n)$ for $p \in [1,\infty]$ be a deterministic control and let $\Phi$ and $\widetilde{\Phi}$ be the stochastic flows of each respective system. The two stochastic flows shall be exponentially stable in mean square sense and define $ C_b$-Markov semigroups. The difference $\Delta(CY)$ of processes $Y$ defined in \eqref{eq:DuHamTra} with initial conditions zero satisfies then
\begin{equation*}
 \left\lVert  \mathbb E   \Delta(CY_{\bullet}(u)) \right\rVert_{L^p((0,\infty),\mathbb R^m)} \le 2 \left\lVert \Delta (H) \right\rVert_{\operatorname{TC}} \left\lVert u \right\rVert_{L^{p}((0,\infty),\mathbb R^n)}.
\end{equation*}
The trace distance of the Hankel operators can be explicitly evaluated using the composite error system, see Appendix \ref{sec:comperrsys}.
\end{prop}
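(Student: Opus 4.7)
The plan is to reduce the stochastic output-error estimate to a deterministic linear one by taking expectations. For a Wiener-driven stochastic evolution equation with deterministic control $u$, the mild-solution Dyson/Volterra expansion of $Y$ (as defined in \eqref{eq:DuHamTra}) alternates deterministic Duhamel terms with It\^o stochastic integrals. The mean-square stability and $C_b$-Markov assumptions provide the integrability needed to apply stochastic Fubini and to conclude that every stochastic integral term has zero expectation, so that
\begin{equation*}
\mathbb E[CY(t)] \;=\; \int_0^t CT(t-s)\,Bu(s)\,ds \;=\; (G_0 \ast u)(t),
\end{equation*}
where $G_0(t):=CT(t)B$ is the first ($k=0$) bilinear transfer kernel. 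The proof is thereby reduced to an input-output estimate for the two associated \emph{linear} systems with transfer functions $G_0$ and $\widetilde G_0$.

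Having reduced to the linear setting, I would first write $\mathbb E\Delta(CY)(t)=(\Delta G_0\ast u)(t)$ and apply Young's convolution inequality to obtain, for every $p\in[1,\infty]$,
\begin{equation*}
\bigl\|\mathbb E\,\Delta(CY_\bullet(u))\bigr\|_{L^p((0,\infty),\RR^m)} \;\le\; \bigl\|\Delta G_0\bigr\|_{L^1((0,\infty))}\,\|u\|_{L^p((0,\infty),\RR^n)}.
\end{equation*}
Next I would bound $\|\Delta G_0\|_{L^1}\le 2\|\Delta H_{\mathrm{lin}}\|_{\operatorname{TC}}$, where $H_{\mathrm{lin}}$ is the linear Hankel operator obtained by restricting the Fock-space Hankel operator $H$ of Section~\ref{sec:Hankel} to its lowest (single-tensor) component. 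This is the classical Curtain--Glover-type estimate: inserting the singular value decomposition $H_{\mathrm{lin}}=\sum_j s_j\,\psi_j\otimes\varphi_j$ into the explicit reachability/observability factorisations $R$, $W$ from Section~\ref{sec:Hankel} and invoking the exponential decay $\|T(t)\|\le Me^{-\nu t}$ makes each time integral absolutely convergent and produces exactly the factor $2$. Finally, since $H_{\mathrm{lin}}$ is the restriction of $H$ to a Fock subspace, trace-norm monotonicity gives $\|\Delta H_{\mathrm{lin}}\|_{\operatorname{TC}}\le\|\Delta H\|_{\operatorname{TC}}$, completing the chain.

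The principal obstacle is the $L^1$-impulse-response estimate $\|\Delta G_0\|_{L^1}\le 2\|\Delta H_{\mathrm{lin}}\|_{\operatorname{TC}}$, because the standard linear balanced-truncation bound is naturally phrased as an $\mathscr H^\infty$ (hence $L^2\!\to\! L^2$) estimate. Upgrading it to a uniform $L^p\!\to\! L^p$ bound valid for every $p\in[1,\infty]$ forces one to control a time-domain $L^1$ norm of the impulse-response difference by the Hankel trace norm, which requires a careful combined use of the explicit maps $W$, $R$ of Section~\ref{sec:Hankel} together with the exponential stability constants $M$ and $\nu$ to absorb the resulting temporal factors. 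The constant $2$ (as opposed to the $4$ in Theorem~\ref{singestim}) arises because only the $k=0$ summand of $\sum_k\bigl(\|\Delta G_{2k-2}\|+\|\Delta G_{2k-1}\|\bigr)$ survives the expectation, halving the bilinear constant. As the proposition states, the resulting $\|\Delta H\|_{\operatorname{TC}}$ is then made explicit through the composite error system of Appendix~\ref{sec:comperrsys}, so the bound is fully computable without ever assembling the Hankel operators themselves.
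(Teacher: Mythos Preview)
Your reduction step is correct: for deterministic $u$, taking expectations kills the stochastic integrals and yields $\mathbb E[CY_t(u)]=\int_0^t \mathbb E[C\Phi(t-s)B]\,u(s)\,ds$, so with $g(\sigma):=\Delta(\mathbb E[C\Phi(\sigma)B])$ one has $\mathbb E\,\Delta(CY)=g\ast u$ and Young's inequality gives the $L^p$ bound in terms of $\|g\|_{L^1}$. This is exactly what the paper does at the end of its proof.

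The gap is in your identification of the Hankel operator. The operator $H$ in this proposition is the \emph{stochastic} Hankel operator of Section~\ref{sec:spdes}, acting between $L^2(\Omega_{(0,\infty)},\mathbb R^n)$ and $L^2(\Omega_{(0,\infty)},\mathbb R^m)$; it is \emph{not} the Fock-space bilinear Hankel operator of Section~\ref{sec:Hankel}. Consequently your monotonicity step ``$\|\Delta H_{\mathrm{lin}}\|_{\operatorname{TC}}\le \|\Delta H\|_{\operatorname{TC}}$ because $H_{\mathrm{lin}}$ is a Fock-subspace restriction of $H$'' has no meaning here: the two operators live on entirely different Hilbert spaces, and there is no Fock grading on $L^2(\Omega_{(0,\infty)})$ to restrict along. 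Your explanation of the constant $2$ as ``only the $k=0$ term survives, halving the bilinear $4$'' is accordingly off the mark.

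What the paper actually does to obtain $\|g\|_{L^1}\le 2\|\Delta(H)\|_{\operatorname{TC}}$ is the following. One chooses orthonormal systems $(e_i)\subset L^2((0,\infty),\mathbb R^n)$ and $(f_i)\subset L^2((0,\infty),\mathbb R^m)$ that are \emph{deterministic} (constant in $\omega$); these remain orthonormal in the larger spaces $L^2(\Omega_{(0,\infty)})$. Plugging them into the trace-norm characterisation \eqref{tracenorm} for the stochastic $\Delta(H)$ and using the $C_b$-Markov semigroup property to collapse the two independent expectations, $\mathbb E_{\omega'}\mathbb E_{\omega}[C\Phi(s,\omega')\Phi(t,\omega)B]=(P(s+t)q)(\psi)=\mathbb E[C\Phi(s+t)B]$, one sees that on deterministic test functions the stochastic Hankel operator acts exactly like the \emph{linear} Hankel operator with kernel $g(s+t)$. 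Hence $\|\Delta(H)\|_{\operatorname{TC}}$ dominates the trace norm of that linear Hankel operator, and the classical estimate \cite[Theorem 2.1]{CGP} gives $\|g\|_{L^1}\le 2\|\Delta(H)\|_{\operatorname{TC}}$. The Markov property is thus the substitute for the Fock-space restriction you were reaching for; without it there is no mechanism linking the stochastic $H$ to the deterministic impulse response $g$.
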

It was first shown in \cite[Example II.2]{BD3} that stochastic systems do not obey error bounds that are linear in the truncated singular values as one has for example from the theory of linear balanced truncation.
Yet, the following result can be obtained by arguing along the lines of the bilinear framework:
\begin{theo}
\label{theo3}
Consider two stochastic systems with the same finite-dimensional input space $\mathbb R^n$ and output space $\mathcal H \simeq \mathbb R^m$ such that the respective stochastic flows $\Phi$ and $\widetilde{\Phi}$ are independent. The two stochastic flows shall be exponentially stable in mean square sense and define $ C_b$-Markov semigroups. The difference $\Delta(CY)$ of processes $Y$ defined in \eqref{eq:DuHamTra} with zero initial conditions satisfies
\begin{equation}
\label{eq:estimate}
\sup_{t \in(0,\infty)} \mathbb E \left\lVert \Delta(CY_t(u)) \right\rVert_{\mathbb R^m} \le 2  \left\lVert \Delta(  H) \right\rVert_{\operatorname{TC}} \left\lVert u\right\rVert_{\mathcal H_2^{(0,\infty)}(\mathbb R^n)}
\end{equation} 
with controls in the Banach space $\left(\mathcal H_{2}^{(0,\infty)}(\mathbb R^n), \sup_{t \in (0,\infty)}\left( \mathbb E \left(   \left\lVert u(t) \right\rVert_{\mathbb R^n}\right)^2\right)^{1/2}\right).$ The trace distance of the Hankel operators can be explicitly evaluated using the composite error system, see Appendix \ref{sec:comperrsys}.
\end{theo}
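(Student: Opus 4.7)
The plan is to transport the argument used for the bilinear case (Theorem~\ref{dyncorr}) into the stochastic setting, with the independence hypothesis playing the role that made the cross-terms in the composite error system tractable. First, I would express $CY_t(u)$ and $\widetilde C\widetilde Y_t(u)$ through an iterated Duhamel/Volterra expansion for the stochastic flows, obtaining representations of each output as a sum over the "order" $k\ge 1$ of objects that are, schematically, a $k$-fold tensor of the control integrated against a $k$-variable kernel driven by the flow. Under the mean-square exponential stability hypothesis, the Neumann-like series governing these expansions converge absolutely in the relevant Fock-space norms.

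Next, I would introduce the composite error system from Appendix~\ref{sec:comperrsys} whose Hankel operator is exactly $\Delta(H)$. Taking the singular value decomposition
\[
\Delta(H) \;=\; \sum_{k\ge 1} \sigma_k \,\langle \,\cdot\,,\phi_k\rangle \,\psi_k
\]
in trace class, I would interpret each pair $(\phi_k,\psi_k)$ in the ambient Fock spaces as the reachability/observability data of the combined system. Substituting this SVD into the iterated-integral representation of $\Delta(CY_t(u))$ re-expresses the output difference as a sum of rank-one contributions weighted by $\sigma_k$, matching the structure used in Theorem~\ref{singestim}.

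At this point I would take $\mathbb E\|\cdot\|_{\mathbb R^m}$ and use independence of $\Phi$ and $\widetilde\Phi$ to factor those cross-expectations that couple terms from the two systems, mirroring the factorisations that were automatic in the deterministic bilinear proof. Applying Jensen's inequality, the triangle inequality and Cauchy--Schwarz in the Fock space, the summands split into an observability factor (bounded via $\|\psi_k\|=1$ and the stability of the output map) and a reachability factor involving iterated time integrals of $\mathbb E\|u(s)\|_{\mathbb R^n}^2$; because the latter is controlled uniformly by $\|u\|^2_{\mathcal H_2^{(0,\infty)}(\mathbb R^n)}$ together with the geometric series $\sum (M^2\Gamma^2/(2\nu))^j <\infty$, each $k$-th summand is dominated by $\sigma_k \|u\|_{\mathcal H_2^{(0,\infty)}}$, and summing against $\sum_k\sigma_k = \|\Delta(H)\|_{\operatorname{TC}}$ gives the result, with the constant $2$ arising from the two contributions in the splitting $C\varphi - \widetilde C\widetilde\varphi = (C\varphi - \text{common}) - (\widetilde C \widetilde\varphi-\text{common})$.

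The principal obstacle will be verifying the tensor--Fock-space Cauchy--Schwarz step in the stochastic setting: the counterexample of \cite[Example~II.2]{BD3} shows that one cannot simply import the deterministic Hardy space bounds, and the independence hypothesis is exactly what rescues a clean factorisation of cross-expectations between the two flows. A secondary technical point is that the bound must be uniform in $t$, which requires that the Neumann-type majorants for the iterated stochastic integrals be taken in the sup-in-$t$ sense from the outset, rather than in an $L^2_t$ sense as in Proposition~\ref{theo2}; this is what forces the appearance of the $\mathcal H_2^{(0,\infty)}$ norm rather than an $L^2((0,\infty),\mathbb R^n)$ norm on the right-hand side.
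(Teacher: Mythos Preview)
Your proposal has a genuine structural gap. In the stochastic setting of Section~\ref{sec:spdes} the Hankel operator is \emph{not} built on Fock spaces and there is no iterated Duhamel/Volterra expansion to invoke: $H=WR$ acts between $L^2(\Omega_{(0,\infty)},\mathbb R^n)$ and $L^2(\Omega_{(0,\infty)},\mathbb R^m)$, and the output is the \emph{single} integral $CY_t(u)=\int_0^t C\Phi(t,s)Bu(s)\,ds$. Consequently the objects you would substitute the SVD into do not exist, the geometric series $\sum(M^2\Gamma^2/(2\nu))^j$ is a bilinear artefact with no stochastic counterpart here, and there is no direct way to write $\Delta(CY_t(u))$ as a sum over singular modes of $\Delta(H)$, since $R$ already contains an expectation and does not reproduce the pathwise output.

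The paper's argument proceeds along a different axis, more in the spirit of Lemma~\ref{Hankelestimate} than of Theorem~\ref{dyncorr}. One first reduces \eqref{eq:estimate} via Cauchy--Schwarz in $\omega$ to the kernel bound
\[
\int_0^\infty \bigl\lVert \Delta(\overline h(v,\cdot))\bigr\rVert_{L^2(\Omega,\operatorname{HS}(\mathbb R^n,\mathbb R^m))}\,dv \;\le\; 2\,\lVert \Delta(H)\rVert_{\operatorname{TC}},
\]
where $\overline h(v,\omega)=C\Phi(v,\omega)B$ is the \emph{compressed} Volterra kernel. The independence hypothesis is used \emph{not} to factor cross-expectations in an SVD expansion, but---together with the $C_b$-Markov property of the joint flow $(\Phi,\widetilde\Phi)$---to collapse the two-variable integral kernel $h((s,\omega),(t,\omega'))=C\Phi(s,\omega)\Phi(t,\omega')B$ of $\Delta(H)$ onto $\overline h(s+t,\cdot)$ in the $L^2(\Omega)\otimes L^2(\Omega)$ norm. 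The trace-norm lower bound is then extracted by the dual characterisation \eqref{tracenorm}: one covers $(0,M)$ by small intervals via Lebesgue's differentiation/covering theorem, and on each interval builds orthonormal test functions of the form $\tfrac{1}{\sqrt{|I|}}\indic_I(s)\,f_{k}(\omega)$, with $f_{k}$ coming from the SVD of the \emph{kernel at a fixed point} (not of $\Delta(H)$ itself). Pairing $\Delta(H)$ against these recovers, up to $\varepsilon$, one half of the $L^1_v L^2_\omega$ norm of $\Delta(\overline h)$, which is exactly the inequality above and the source of the constant~$2$.
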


\subsection*{Finite-dimensional intermezzo and relation to balanced truncation}
Hitherto, stochastic and bilinear balanced truncation have only been considered for finite-dimensional systems and so we devote a few preliminary remarks towards this setting.
When applying for example balanced truncation to finite-dimensional systems one computes the observability and reachability gramians $\mathscr O $ and $\mathscr P$ from the Lyapunov equations and decomposes these symmetric positive-definite matrices into some other (non-unique) matrices $\mathscr O=K^*K$ and $\mathscr P = VV^*.$ In the next step, a singular value decomposition of the matrix $ KV$ is computed. The singular values of this matrix $KV$ are just the square-roots of the eigenvalues of the product of the gramians $\sigma_j:=\sqrt{\lambda_j(\mathscr O \mathscr P)}$ independent of the particular form of $K$ and $V$ (zero is not counted as a singular value here). 

By discarding a certain number of "small" singular values of $KV$, one can reduce the order of the system by applying for example the balancing transformations, see \cite[Proposition $2$]{ZL}.
A paradigm of such a decomposition $KV$, where $K$ and $V$ are not matrices but operators, is the Hankel operator $H$. 
Yet most importantly, all such decompositions of the gramians are equivalent \cite[Theorem $5.1$]{ReSe}. That is, there are unitary transformations $U_1: \overline{\operatorname{ran}}(H) \rightarrow \overline{\operatorname{ran}}(KV)$ and $U_2: \operatorname{ker}(H)^{\perp} \rightarrow \operatorname{ker}(KV)^{\perp}$ such that any decomposition $KV\vert_{\operatorname{ker}(KV)^{\perp}} $ of the gramians is equivalent to the Hankel operator studied in this paper  $H \vert_{\operatorname{ker}(H)^{\perp}} = U_1^* \ KV\vert_{\operatorname{ker}(KV)^{\perp}}  U_2.$ This makes our results on error bounds widely applicable since the Hankel decomposition is as good as any other decomposition.  

This is because evaluating the trace norm of the difference of Hankel operators appearing in our error bound, one only has to compute the gramians of the composite system and not the actual Hankel operators, see the explanation given in Appendix \ref{sec:comperrsys}. In particular, the respective gramians of the composite system can be computed for example directly from the Lyapunov equations of the composite error system.
\subsection*{Notation}
The space of bounded linear operators between Banach spaces $X,Y$ is denoted by $\mathcal L(X,Y)$ and just by $\mathcal L(X)$ if $X=Y.$ The operator norm of a bounded operator $T \in \mathcal L(X,Y)$ is written as $\left\lVert T \right\rVert$. The trace class operators from $X$ to $Y$ are denoted by $\operatorname{TC}(X,Y)$ and the Hilbert-Schmidt operators by $\operatorname{HS}(X,Y).$ In particular, we recall that for a linear trace class operator $T \in \operatorname{TC}(X,Y)$, where $X$ and $Y$ are separable Hilbert spaces, the trace norm is given by the following supremum over orthonormal systems of basis vectors (ONB),
\begin{equation}
\label{tracenorm}
\left\lVert T \right\rVert_{\operatorname{TC}}=\sup \left\{ \sum_{n \in \mathbb{N}} \left\lvert \langle f_n,T e_n  \rangle_Y \right\rvert ; (e_n) \text{ ONB of } X \text{ and } (f_n) \text{ ONB of } Y \right\}.
\end{equation}
We write $\partial B_X(1)$ for the unit sphere of a Banach space $X$ and say that $g=\mathcal O (f)$ if there is $C>0$ such that $\left\lVert g \right\rVert \le C \left\lVert f \right\rVert.$ In order not to specify the constant $C$, we also write $\left\lVert g \right\rVert \lesssim  \left\lVert f \right\rVert.$ The indicator function of an interval $I$ is denoted by $\indic_I.$
The domain of unbounded operators $A$ is denoted by $D(A).$

Let $\mathnormal H$ be a separable Hilbert space. For the n-fold Hilbert space tensor product of a Hilbert space $\mathnormal H$ we write $\mathnormal H^{\otimes n}:=\mathnormal H \otimes ...\otimes \mathnormal H.$ To define the Hankel operator we require a decomposition of the positive gramians.  For this purpose, we introduce the Fock space $F^n(\mathnormal H)$ of $\mathnormal H$-valued functions $F^n(\mathnormal H):=\bigoplus_{k=1}^{\infty}F_k^n(\mathnormal H)$ where $F_k^n(\mathnormal H):=L^2((0,\infty)^k,\mathnormal H \otimes (\mathbb R^{n})^{\otimes (k-1)})$ and $F_0^n(\mathnormal H):=\mathnormal H.$

Thus, elements of the Fock space $F^n$ are sequences taking values in $F^n_k.$ 

Let $\mathbb C_{+}$ be the right complex half-plane, then we define the $\mathnormal H$-valued Hardy spaces $\mathscr H^2$ and $\mathscr H^{\infty}$ of multivariable holomorphic functions $F:\mathbb C_{+}^k \rightarrow \mathnormal H$ with finite norms
\begin{equation*}
\left\lVert F \right\rVert_{\mathscr H^2}:=\sup_{x \in \mathbb R^{k}} \frac{1}{(2\pi)^{k/2}} \left( \int_{(0,\infty)^{k}} \left\lVert F(x+iy) \right\rVert_{\mathnormal H}^2 \ dy\right)^{\frac{1}{2}} \text{ and } \left\lVert F \right\rVert_{\mathscr H^{\infty}}:=\sup_{z \in \mathbb C_{+}^{k}}  \left\lVert F(z) \right\rVert_{\mathnormal H},
\end{equation*} 
respectively. We also introduce mixed $L^1_iL^2_{k-1}$ and $\mathscr H^{\infty}_i \mathscr H^{2}_{k-1}$ norms which for $\mathnormal H$-valued functions $f:(0,\infty)^k \rightarrow  \mathnormal H$ and $g: \mathbb C_{+}^k \rightarrow \mathnormal H$ read
\begin{equation}
\label{eq: mix}
\begin{split}
&\left\lVert f \right\rVert_{L^1_iL^2_{k-1}(\mathnormal H)} = \int_0^{\infty} \left\lVert f(\bullet,..,\bullet,s_i, \bullet,..,\bullet) \right\rVert_{L^2((0,\infty)^{k-1},\mathnormal H)} \ ds_i \text{ and } \\
&\left\lVert g \right\rVert_{\mathscr H^{\infty}_i \mathscr H^{2}_{k-1}(\mathnormal H)}= \sup_{s_i \in \mathbb C_{+}} \left\lVert g(\bullet,..,\bullet,s_i, \bullet,..,\bullet) \right\rVert_{\mathscr H^2((0,\infty)^{k-1},\mathnormal H)}. 
\end{split}
\end{equation}
Finally, for $k$-variable functions $h$ we occassionally use the short notation
\begin{equation}
\label{eq:k}
h^{(i)}(s,t):=h(s_1,...,s_{i-1},t,s_{i},..,s_{k-1}).
\end{equation}

In Section \ref{sec:spdes}, the space $L^p_{\text{ad}}$ denotes the $L^p$ spaces of stochastic processes that are adapted to some given filtration and we introduce the notation $\Omega_I:=I \times \Omega$ where $I$ is some interval.

\section{The pillars of bilinear balanced truncation}\label{sec:bilinearBT}

We start with the definition of the gramians on $X$ which extend the standard definition on finite-dimensional spaces \cite[(6)~and~(7)]{ZL} to arbitrary separable Hilbert spaces. 
\subsection{Gramians}
Let  $\mathcal{H}$ be a separable Hilbert space and $C \in \mathcal L(X,\mathcal{H})$ the state-to-output (observation) operator. The space $\mathcal H$ is called the \emph{output space}. As we assume that there are $n$ control functions, the space $\mathbb R^n$ will be referred to as the \emph{input space}. Adopting the notation used in \eqref{evoleq} with strongly continuous semigroup $(T(t))$ generated by $A$, we then introduce the bilinear gramians for times $t_i \in (0,\infty)$:

\begin{defi} 
\label{gramians} 
Let $O_0(t_1):=CT(t_1)$. Then, for $i \ge 1$ and $y \in X$ define 
\begin{equation*}
O_{i}(t_1,..,t_{i+1})y:=CT(t_1) \sum_{n_1,...,n_{i}=1}^n \left(\prod_{l=2}^{i+1} \left(N_{n_{l-1}}T(t_l) \right)\right)y \otimes \left(\widehat{e}_{n_1} \otimes ...\otimes \widehat{e}_{n_{i}}\right)
\end{equation*}
with $\widehat{e}_i$ denoting the standard basis vectors of $\mathbb R^n.$

Let $M^2\Gamma^2(2\nu)^{-1} < 1 $, then the bounded operators $\mathscr{O}_k$ defined for $x,y \in X$ by
\begin{equation}
\label{eq:O}
\langle x, \mathscr{O}_ky \rangle_X := \int_{(0,\infty)^{k+1}}\langle O_k(s)x, O_k(s)y \rangle_{\mathcal H \otimes \mathbb R^{n^{\otimes k}}} \ ds
\end{equation}
are summable in operator norm. The limiting operator, given by $\mathscr{O} := \sum_{k=0}^{\infty} \mathscr{O}_k$, is called the \emph{observability gramian} $\mathscr{O}$ in $\mathcal L(X).$ 

Similarly for the reachability gramian, let $P_0(t_1):=T(t_1)^*$. Then, we define for $i \ge 1$ and $y \in X$
\begin{equation*}
P_{i}(t_1,..,t_{i+1})y:= \sum_{n_1,...,n_{i}=1}^n \left(\prod_{l=1}^{i}  \left(T(t_l)^*N_{n_l}^*\right)\right)T(t_{i+1})^*y \otimes \left(\widehat{e}_{n_1} \otimes ...\otimes \widehat{e}_{n_{i}}\right). 
\end{equation*}
The control operator $B \in \mathcal L(\mathbb R^n, X)$ shall be of the form $Bu=\sum_{i=1}^n \psi_i u_i$ for $\psi_i \in X.$ This implies that the operator $BB^*=\sum_{i=1}^n \langle \bullet, \psi_i \rangle \psi_i $ is a finite-rank operator. Then, we introduce operators $\mathscr{P}_k $ such that for any $x,y \in X$
\begin{equation}
\label{eq:P}
\langle x,\mathscr{P}_ky \rangle_X:=\int_{(0,\infty)^{k+1}} \left\langle P_k(s)x, \left(BB^* \otimes \operatorname{id}_{\mathbb R^{n^{\otimes k}}}\right) P_k(s)y \right\rangle_{X \otimes \mathbb R^{n^{\otimes k}}} \ ds.
\end{equation}
Let $M^2\Gamma^2(2\nu)^{-1} < 1,$ the \emph{reachability gramian} is then defined as $\mathscr P:= \sum_{k=0}^{\infty} \mathscr{P}_k \in \operatorname{TC}(X).$
The $\operatorname{TC}(X)$-convergence follows from the characterization \eqref{tracenorm} of the trace norm as for any orthonormal systems $(e_i),(f_i)$ of $X$ 
\begin{equation*}
\begin{split} 
&\sum_{i=1}^{\operatorname{dim}(X)} \left\lvert \langle f_i,\mathscr{P} e_i  \rangle_X \right\rvert  \\
&\le \left\lVert BB^* \right\rVert_{\operatorname{TC}(X)} \sum_{k=0}^{\infty}\int_{(0,\infty)^{k+1}} \sum_{n_1,...,n_{k}=1}^n  \left\lVert \prod_{l=1}^{k}  \left(T(t_l)^*N_{n_l}^*\right)T(t_{k+1})^* \right\rVert^2  \ dt < \infty.
\end{split}
 \end{equation*}

 \end{defi}
\begin{ass}
\label{generalass}
We assume that $M^2\Gamma^2(2\nu)^{-1}<1$ holds such that both $\mathscr{O}$ and $\mathscr{P}$ exist.
\end{ass}

As for systems on finite-dimensional spaces \cite[Theorems $3$ and $4$]{ZL} the gramians satisfy certain Lyapunov equations. However, those equations hold only in a weak sense if the generator of the semigroup $A$ is unbounded.
\begin{lemm}
For all $x_1,y_1 \in D(A)$ and all $x_2,y_2 \in D(A^*)$
\begin{equation}
\begin{split}
\label{Lyap1}
&\langle \mathscr{O}Ax_1,y_1 \rangle_X + \langle \mathscr{O}x_1,Ay_1 \rangle_X + \sum_{i=1}^n \langle \mathscr{O}N_ix_1,N_iy_1 \rangle_X + \left\langle Cx_1,Cy_1 \right\rangle_{\mathcal{H}} =0 \text{ and }\\
&\langle \mathscr{P}A^*x_2,y_2 \rangle_X + \langle \mathscr{P}x_2,A^*y_2 \rangle_X + \sum_{i=1}^n \langle \mathscr{P}N_i^*x_2,N_i^*y_2 \rangle_X + \langle BB^*x_2,y_2\rangle_X=0.
\end{split}
\end{equation}
\end{lemm}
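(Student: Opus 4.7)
The plan is to verify both Lyapunov identities level-by-level in the expansion $\mathscr{O}=\sum_{k\ge 0}\mathscr{O}_k$ (and analogously for $\mathscr{P}$) and then sum over $k$, using exponential stability of $(T(t))$ to kill the boundary terms at infinity. Fixing $x_1,y_1\in D(A)$, I would write for each $k\ge 0$,
\[
\langle x_1,\mathscr{O}_k y_1\rangle_X = \sum_{n_1,\ldots,n_k=1}^n \int_{(0,\infty)^{k+1}} \langle C\Lambda_{\vec n}(\tilde t)T(t_{k+1})x_1,\, C\Lambda_{\vec n}(\tilde t)T(t_{k+1})y_1\rangle_{\mathcal H}\, dt,
\]
where $\tilde t=(t_1,\ldots,t_k)$ and $\Lambda_{\vec n}(\tilde t):=T(t_1)N_{n_1}\cdots T(t_k)N_{n_k}$, with the convention $\Lambda=I$ when $k=0$.

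The key computation is to differentiate the integrand in the variable $t_{k+1}$. Since $T(t)$ leaves $D(A)$ invariant with $\tfrac{d}{dt}T(t)z=T(t)Az$ for $z\in D(A)$, while $C\Lambda_{\vec n}(\tilde t)$ is a uniformly bounded factor, the fundamental theorem of calculus combined with $\|T(t)\|\le Me^{-\nu t}$ yields
\[
-\langle C\Lambda_{\vec n}x_1,\, C\Lambda_{\vec n}y_1\rangle_{\mathcal H} = \int_0^{\infty} \bigl(\langle C\Lambda_{\vec n}T(s)Ax_1, C\Lambda_{\vec n}T(s)y_1\rangle_{\mathcal H}+\langle C\Lambda_{\vec n}T(s)x_1, C\Lambda_{\vec n}T(s)Ay_1\rangle_{\mathcal H}\bigr)\,ds.
\]
After integrating over $\tilde t\in(0,\infty)^k$ and summing over $\vec n$, the right-hand side is exactly $\langle \mathscr{O}_k Ax_1,y_1\rangle_X+\langle \mathscr{O}_k x_1,Ay_1\rangle_X$. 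Relabelling $n_k=j$ and pulling the $j$-sum out front, the left-hand side reorganises to $-\sum_{j=1}^n\langle \mathscr{O}_{k-1}N_j x_1, N_j y_1\rangle_X$ for $k\ge 1$ (the remaining $k$ time-integrations and $k-1$ index-sums reproduce the definition of $\mathscr{O}_{k-1}$ evaluated at $N_j x_1,N_j y_1$), and to $-\langle Cx_1,Cy_1\rangle_{\mathcal H}$ for $k=0$.

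Summing the resulting identities over $k\ge 0$ and swapping $\sum_k$ with the bilinear form — justified by the operator-norm (indeed geometric) convergence of $\sum_k\mathscr{O}_k$ established in Definition~\ref{gramians} under Assumption~\ref{generalass} — yields the first line of \eqref{Lyap1}: the $k=0$ boundary contribution produces $\langle Cx_1,Cy_1\rangle_{\mathcal H}$, and the $k\ge 1$ contributions telescope into $\sum_j\langle \mathscr{O}N_j x_1,N_j y_1\rangle_X$. For the reachability equation I would repeat the argument verbatim for the adjoint semigroup $(T(t)^*)$, which is also strongly continuous and exponentially stable with the same constants and whose generator is $A^*$ on $D(A^*)$; the $k=0$ boundary term now produces $\langle BB^* x_2,y_2\rangle_X$ in place of $\langle Cx_1,Cy_1\rangle_{\mathcal H}$.

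The main — essentially technical — obstacle is legitimising the differentiation under the multiple integral and the exchange of $\sum_k$ with $\langle\cdot,\cdot\rangle_X$. Both follow from dominated convergence once one uses the geometric bound $\|\mathscr{O}_k\|\lesssim (M^2\Gamma^2(2\nu)^{-1})^k$ implicit in Definition~\ref{gramians}; one minor point to flag is that $N_j x_1$ and $N_j y_1$ need not lie in $D(A)$, but this is harmless because $\mathscr{O}\in\mathcal L(X)$ is a bounded operator.
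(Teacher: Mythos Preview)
Your proposal is correct and follows essentially the same route as the paper: differentiate the integrand in the last time variable, apply the fundamental theorem of calculus using exponential decay of the semigroup to kill the boundary term at infinity, identify the level-$k$ recursion $\langle \mathscr{O}_k Ax_1,y_1\rangle+\langle \mathscr{O}_k x_1,Ay_1\rangle+\sum_j\langle \mathscr{O}_{k-1}N_jx_1,N_jy_1\rangle=0$, and sum using the operator-norm convergence of $\sum_k\mathscr{O}_k$. The only cosmetic difference is that the paper first proves the diagonal case $x_1=y_1$ and then invokes polarization, whereas you work directly with the bilinear form.
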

\begin{proof} We restrict us to the proof of the first identity, since the proof of the second one is fully analogous.
Let $x \in D(A)$ then by \eqref{eq:O}
\begin{equation*} \begin{split}
&\langle \mathscr{O}_0Ax,x \rangle +\langle \mathscr{O}_0x,Ax \rangle_X + \left\lVert Cx \right\rVert_{\mathcal{H}}^2  \\
&=\int_{0}^{\infty}\left(\langle CT'(s)x,CT(s)x \rangle_{\mathcal{H}} + \langle CT(s)x, CT'(s)x \rangle_{\mathcal{H}}\right) \ ds +\left\lVert Cx \right\rVert_{\mathcal{H}}^2  \\
&=\int_{0}^{\infty} \frac{d}{ds} \left\lVert CT(s)x \right\rVert_{\mathcal{H}}^2 \ ds +\left\lVert Cx \right\rVert_{\mathcal{H}}^2=0. 
\end{split} \end{equation*}

Similarly, for $x \in D(A)$ and $k \ge 1$ by the fundamental theorem of calculus, the exponential decay of the semigroup at infinity, and the definition of the observability gramian
\begin{equation*} \begin{split}
&\langle \mathscr{O}_kAx,x \rangle_X + \langle \mathscr{O}_k x,Ax \rangle_X+\sum_{i=1}^n\langle \mathscr{O}_{k-1}N_ix,N_ix \rangle_X = \sum_{i=1}^n \langle \mathscr{O}_{k-1}N_ix,N_ix \rangle_X\\
&+  \sum_{i=1}^n\int_{(0,\infty)^{k}} \int_{(0,\infty)}\ \frac{d}{d\tau} \left\lVert O_{k-1}(s_1,....,s_k)(N_i T(\tau)x) \right\rVert^2_{\mathcal H \otimes \mathbb R^{n^{\otimes (k-1)}}} \ d\tau \ ds =0.
\end{split} \end{equation*}
Using uniform convergence of $\mathscr{O} = \sum_{k=0}^{\infty} \mathscr{O}_k$ it follows that
\begin{equation*}
\langle \mathscr{O} Ax,x \rangle_X + \langle \mathscr{O}x,Ax \rangle_X + \sum_{i=1}^n \langle \mathscr{O}N_ix,N_ix \rangle_X + \left\lVert Cx \right\rVert^2_{\mathcal{H}}=0. 
\end{equation*}
Finally, we may use the polarization identity to obtain \eqref{Lyap1}. 
\end{proof}

Analogously to the result for finite-dimensional systems in \cite[Theorem $3.1$]{BD}, we obtain the following eponymous properties for the gramians.  
\begin{lemm}
\label{homsys}
All elements $\varphi_0\in \operatorname{ker}(\mathscr{O})$ are unobservable in the homogeneous system, i.e. solutions to 
\begin{equation}
\label{homogeneousevoleq}
\varphi'(t)=A\varphi(t)+\sum_{i=1}^n N_i\varphi(t)u_i(t),\ \text{ for }t>0 
 \end{equation}
 with $\varphi(0)=\varphi_0  \in  \operatorname{ker}(\mathscr{O})$ satisfy $C\varphi(t)=0$ for all $t \ge 0.$
\end{lemm}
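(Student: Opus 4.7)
The plan is to exploit the nonnegativity of every summand $\mathscr O_k$ to promote the kernel condition on $\mathscr O$ to a family of pointwise vanishing identities, and then to substitute those identities into the Volterra expansion of the mild solution of \eqref{homogeneousevoleq}.

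First I would observe that by \eqref{eq:O} each $\mathscr O_k$ admits the factorisation $\mathscr O_k=W_k^{*}W_k\ge 0$, where $W_k\colon X\to L^{2}((0,\infty)^{k+1},\mathcal H\otimes\mathbb R^{n^{\otimes k}})$ sends $\varphi\mapsto O_k(\cdot)\varphi$. Under Assumption \ref{generalass} the series $\mathscr O=\sum_{k\ge 0}\mathscr O_k$ converges in $\mathcal L(X)$ with nonnegative summands, so $\varphi_0\in\ker(\mathscr O)$ is equivalent to $\langle\varphi_0,\mathscr O\varphi_0\rangle_X=0$, which in turn forces $\langle\varphi_0,\mathscr O_k\varphi_0\rangle_X=0$ for every $k\ge 0$, i.e.
$$\int_{(0,\infty)^{k+1}}\|O_k(s)\varphi_0\|_{\mathcal H\otimes\mathbb R^{n^{\otimes k}}}^{2}\,ds=0.$$

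Second, strong continuity of $(T(t))_{t\ge 0}$ together with boundedness of $C$ and of the $N_i$ makes $s\mapsto O_k(s)\varphi_0$ continuous on $(0,\infty)^{k+1}$, so the $L^2$-vanishing above upgrades to pointwise vanishing. Projecting on the tensors $\widehat e_{n_1}\otimes\cdots\otimes\widehat e_{n_k}$ and letting the arguments tend to $0$ by strong continuity at the origin yields
$$CT(t_1)N_{n_1}T(t_2)N_{n_2}\cdots N_{n_k}T(t_{k+1})\varphi_0=0$$
for all $k\ge 0$, all $n_1,\dots,n_k\in\{1,\dots,n\}$, and all $t_1,\dots,t_{k+1}\ge 0$.

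Finally I would invoke the Volterra series representation of the mild solution to \eqref{homogeneousevoleq} provided by the technical lemma in Appendix A: for every $t\ge 0$,
$$\varphi(t)=T(t)\varphi_0+\sum_{k=1}^{\infty}\sum_{n_1,\dots,n_k=1}^{n}\int_{\Delta_k(t)}T(t-s_1)N_{n_1}T(s_1-s_2)\cdots N_{n_k}T(s_k)\varphi_0\prod_{j=1}^{k}u_{n_j}(s_j)\,ds,$$
with $\Delta_k(t):=\{0<s_k<\cdots<s_1<t\}$, the series converging absolutely in $X$ by Assumption \ref{generalass}. Boundedness of $C$ lets me pull it inside the sum and the iterated integrals, so every term reduces to $CT(\cdot)N_{n_1}T(\cdot)\cdots N_{n_k}T(\cdot)\varphi_0$ evaluated at nonnegative arguments, which vanishes by the previous step. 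Hence $C\varphi(t)=0$ for every $t\ge 0$.

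The main obstacle is the second step: going from a vanishing $L^{2}$-integral to a genuine pointwise identity. This hinges on continuity of the integrands on the open positive orthant, which follows from strong continuity of the semigroup and boundedness of $C$ and the $N_i$, together with a boundary extension as $t_i\downarrow 0$. The interchange of $C$ with the Volterra sum is then routine, since the absolute convergence in $X$ is controlled by the same geometric bound $M^{2}\Gamma^{2}(2\nu)^{-1}<1$ that was used to construct $\mathscr O$ and $\mathscr P$.
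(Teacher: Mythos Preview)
Your first two steps are sound: nonnegativity of each $\mathscr O_k$ forces $\int\|O_k(s)\varphi_0\|^2\,ds=0$ for every $k$, and joint strong continuity of $s\mapsto O_k(s)\varphi_0$ upgrades this to the pointwise identities $CT(t_1)N_{n_1}\cdots N_{n_k}T(t_{k+1})\varphi_0=0$ for all $t_j\ge 0$. The gap is in the third step. You justify convergence of the Volterra series by Assumption~\ref{generalass}, but that assumption concerns only the system parameters ($M^2\Gamma^2(2\nu)^{-1}<1$) and says nothing about $u$; Lemma~\ref{Voltconv} requires the separate smallness hypothesis $\|u\|_{L^2((0,\infty),(\mathbb R^n,\|\cdot\|_\infty))}<\sqrt{2\nu}/(M\Xi)$. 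As written, your argument therefore proves the lemma only for sufficiently small controls, whereas the statement is for arbitrary $u\in L^2$.

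The paper avoids this restriction by a different route: rather than expanding the solution, it shows directly that $\ker(\mathscr O)$ is a closed subspace invariant under the semigroup $T(t)$ (via~\eqref{eq:O} and the semigroup law) and under each $N_i$, and that $\ker(\mathscr O)\subset\ker(C)$ (both from the Lyapunov identity~\eqref{Lyap1}). The homogeneous equation is then well-posed on $\ker(\mathscr O)$, so the mild solution stays there for every $u$, with no series needed. Your pointwise identities already contain these invariances---replace $t_{k+1}$ by $t_{k+1}+t$ for $T(t)$-invariance, and set $t_{k+2}=0$ in the level-$(k+1)$ identity for $N_i$-invariance---so you can repair the gap by finishing along the paper's lines; alternatively, partition $[0,T]$ into subintervals on which $\|u\|_{L^2}$ is small and iterate, but that requires the same invariance of $\ker(\mathscr O)$ to propagate the hypothesis across subintervals.
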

\begin{proof}
An element $x \in X$ is in $\operatorname{ker}(\mathscr O)$ if and only if $\left\langle \mathscr O_k x, x \right\rangle_X =0$ for all $k \in \mathbb N_0.$
We start by showing that $\operatorname{ker}(\mathscr O)$ is an invariant subspace of the semigroup $(T(t)).$
Let $x \in \operatorname{ker}(\mathscr O)$ then for all $t \ge 0$ and all $k$ by \eqref{eq:O} and the semigroup property 
\begin{equation*}
\begin{split}
0 \le \left\langle \mathscr O_k T(t)x, T(t)x \right\rangle_X &=\int_{(0,\infty)^{k+1}}\left\lVert O_k(s)T(t)x \right\rVert^2_{\mathcal H \otimes \mathbb R^{n^{\otimes k}}} \ ds\\
&= \int_{(0,\infty)^{k}} \sum_{i=1}^n \int_{0}^{\infty}\left\lVert O_{k-1}(s)N_i T(s_{k+1}+t)x\right\rVert^2_{\mathcal H \otimes \mathbb R^{n^{\otimes {k-1}}}} \ ds_{k+1} \ ds \\
&= \int_{(0,\infty)^{k}} \int_{t}^{\infty}\langle O_k(s,\tau)x, O_k(s,\tau)x \rangle_{\mathcal H \otimes \mathbb R^{n^{\otimes k}}} \ d\tau \ ds \\
&\le \int_{(0,\infty)^{k+1}} \langle O_k(s)x, O_k(s)x \rangle_{\mathcal H \otimes \mathbb R^{n^{\otimes k}}}  \ ds = \left\langle \mathscr O_k x, x \right\rangle_X = 0
\end{split}
\end{equation*} 
where we used the semigroup property of $(T(t))$, substituted $\tau=s_{k+1}+t,$ and extended the integration domain to get the final inequality.
Thus, $(T(t))$ restricts to a $C_0$-semigroup on the closed subspace $\operatorname{ker}(\mathscr O)$ and the generator of $A$ is the part of $A$ in $\operatorname{ker}(\mathscr O)$ \cite[Chapter II 2.3]{EN}. In particular, $D(A) \cap \operatorname{ker}(\mathscr O)$ is dense in $\operatorname{ker}(\mathscr O).$
Let $x \in \operatorname{ker}(\mathscr{O}) \cap D(A),$ then positivity of $\mathscr{O}$ implies by the first Lyapunov equation \eqref{Lyap1} with $x_1=y_1=x$
that $N_ix \in \operatorname{ker}(\mathscr{O})$ and $x \in \operatorname{ker}(C)$. Thus, a density argument shows $N_i \left(\operatorname{ker}(\mathscr{O}) \right) \subset \operatorname{ker}(\mathscr{O})$ and $\operatorname{ker}(\mathscr{O})\subset \operatorname{ker}(C).$

This shows, by \cite[Proposition 5.3]{LiYo}, that \eqref{homogeneousevoleq} is well-posed on $\operatorname{ker}(\mathscr O)$, i.e. for initial data in $\operatorname{ker}(\mathscr O)$ the solution to \eqref{homogeneousevoleq} stays in $\operatorname{ker}(\mathscr{O})$.
From the inclusion $\operatorname{ker}(\mathscr{O}) \subset \operatorname{ker}(C)$, we then obtain $C \varphi(t)=0.$
\end{proof}

\begin{lemm}
The closure of the range of the reachability gramian $\mathscr{P}$ is an invariant subspace of the flow of \eqref{evoleq}, i.e. for $\varphi_0 \in \overline{\operatorname{ran}}(P)$ it follows that $\varphi(t) \in \overline{\operatorname{ran}}(P)$ for all times $t\ge 0.$
\end{lemm}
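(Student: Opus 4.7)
The plan is to argue dually via the kernel of $\mathscr{P}$, exactly mirroring the strategy used for Lemma~\ref{homsys}. Since $\mathscr{P}$ is self-adjoint (the sum of self-adjoint positive operators $\mathscr{P}_k$), one has $\overline{\operatorname{ran}}(\mathscr{P}) = \operatorname{ker}(\mathscr{P})^{\perp}$, so it suffices to show that $\operatorname{ker}(\mathscr{P})$ is invariant under $T(t)^{*}$ and each $N_i^{*}$, and that $\operatorname{ker}(\mathscr{P}) \subset \operatorname{ker}(B^{*})$; applying the mild solution formula \eqref{mild solution} then puts $\varphi(t)$ in $\overline{\operatorname{ran}}(\mathscr{P})$.

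First I would establish $T(t)^{*}$-invariance of $\operatorname{ker}(\mathscr{P})$ by direct computation. Because $P_k(t_1,\dots,t_{k+1})$ only touches its argument through the rightmost factor $T(t_{k+1})^{*}$, composing with $T(t)^{*}$ yields $P_k(t_1,\dots,t_k,t_{k+1}+t)$. Substituting $\tau = t_{k+1}+t$ and extending the integration domain from $(t,\infty)$ back to $(0,\infty)$ produces the bound
\begin{equation*}
0 \le \langle \mathscr{P}_k T(t)^{*} x, T(t)^{*} x \rangle_X \le \langle \mathscr{P}_k x, x \rangle_X,
\end{equation*}
exactly as in the proof of Lemma~\ref{homsys}. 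Summing over $k$ gives $T(t)^{*} \operatorname{ker}(\mathscr{P}) \subset \operatorname{ker}(\mathscr{P})$, so $T(t)^{*}$ restricts to a $C_0$-semigroup on the closed subspace $\operatorname{ker}(\mathscr{P})$, and its generator is the part of $A^{*}$ in $\operatorname{ker}(\mathscr{P})$. In particular $D(A^{*}) \cap \operatorname{ker}(\mathscr{P})$ is dense in $\operatorname{ker}(\mathscr{P})$.

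Next I would invoke the second Lyapunov identity in \eqref{Lyap1}: for $x \in D(A^{*}) \cap \operatorname{ker}(\mathscr{P})$ setting $x_2 = y_2 = x$ gives
\begin{equation*}
2\,\Re\langle \mathscr{P} A^{*} x, x \rangle_X + \sum_{i=1}^{n}\langle \mathscr{P} N_i^{*} x, N_i^{*} x \rangle_X + \|B^{*} x\|_{\mathbb{R}^n}^{2} = 0.
\end{equation*}
The first term vanishes because $\mathscr{P} x = 0$, and the remaining two terms are non-negative by positivity of $\mathscr{P}$. Hence $\mathscr{P}^{1/2} N_i^{*} x = 0$ and $B^{*} x = 0$, i.e.\ $N_i^{*} x \in \operatorname{ker}(\mathscr{P})$ and $x \in \operatorname{ker}(B^{*})$. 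A density argument (using boundedness of $N_i^{*}$ and $B^{*}$) extends these inclusions to all of $\operatorname{ker}(\mathscr{P})$.

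Taking orthogonal complements then yields $T(t)\overline{\operatorname{ran}}(\mathscr{P}) \subset \overline{\operatorname{ran}}(\mathscr{P})$, $N_i \overline{\operatorname{ran}}(\mathscr{P}) \subset \overline{\operatorname{ran}}(\mathscr{P})$, and $\operatorname{ran}(B) \subset \overline{\operatorname{ran}}(\mathscr{P})$; plugging into the mild solution formula \eqref{mild solution} and using that $\overline{\operatorname{ran}}(\mathscr{P})$ is closed under the Bochner integral of $\overline{\operatorname{ran}}(\mathscr{P})$-valued integrands finishes the proof. The only nontrivial step is the direct invariance computation for $\operatorname{ker}(\mathscr{P})$ under $T(t)^{*}$, which is needed precisely to ensure that $D(A^{*}) \cap \operatorname{ker}(\mathscr{P})$ is dense enough in $\operatorname{ker}(\mathscr{P})$ for the Lyapunov argument to apply; everything else is a dualisation of bookkeeping already present in Lemma~\ref{homsys}.
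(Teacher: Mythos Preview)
Your proposal is correct and is precisely the dualisation the paper has in mind: the paper's own proof consists of the single line ``Analogous to Lemma~\ref{homsys},'' and you have spelled out exactly that analogy---showing $T(t)^{*}$-invariance of $\operatorname{ker}(\mathscr{P})$ by the same substitution-and-extend argument, then using the second Lyapunov identity and density of $D(A^{*})\cap\operatorname{ker}(\mathscr{P})$ to obtain $N_i^{*}$-invariance and $\operatorname{ker}(\mathscr{P})\subset\operatorname{ker}(B^{*})$, and finally passing to orthogonal complements. The only minor addition would be to note (as you do for Lemma~\ref{homsys}) that well-posedness of \eqref{evoleq} on the closed invariant subspace $\overline{\operatorname{ran}}(\mathscr{P})$ follows from \cite[Proposition~5.3]{LiYo}, so that uniqueness of mild solutions gives $\varphi(t)\in\overline{\operatorname{ran}}(\mathscr{P})$ directly rather than via a Bochner-integral closure argument.
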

\begin{proof}
Analogous to Lemma \ref{homsys}.
\end{proof}

\section{Hankel operators on Fock spaces}\label{sec:Hankel}
To decompose the observability gramian as $\mathscr{O}= W^*  W$ and the reachability gramian as $\mathscr{P} =  R  R^* $, we start by defining the observability and reachability maps.
\begin{defi}
\label{defWk}
For $k \in \mathbb{N}_0$ let $ W_k\in \mathcal L\left( X, F^n_{k+1}\left(\mathcal{H}  \right)\right)$ be the operators that map $X \ni x \mapsto O_k(\bullet)x.$
Their operator norms can be bounded by $\left\lVert  W_k \right\rVert = \mathcal O\left( \left(M \Gamma ( 2 \nu )^{-1/2} \right)^k\right).$ \newline
A straightforward computation shows that the adjoint operators $ W_k^* \in \mathcal L\left(F_{k+1}^n\left(\mathcal{H} \right),X\right)$ read
\begin{equation*}
 W_k^*f:=\int_{(0,\infty)^{k+1}} O_k^*(s)f(s) \ ds.
\end{equation*}
Then, we can define, by Assumption \ref{generalass}, the \emph{observability map} $W \in \mathcal L\left(X, F^n\left(\mathcal{H} \right)\right)$ as $W(x):=\left( W_k(x)\right)_{k \in \mathbb{N}_0}.$
An explicit calculation shows that  $ W^*$ is given for $(f_k)_{k}\in F^n\left(\mathcal{H} \right)$ by
\[ W^*((f_k)_{k})= \sum_{k=0}^{\infty} W_k^*f_k.\]

Similarly to the decomposition of the observability gramian, we introduce a decomposition of the reachability gramian $\mathscr{P} =  R  R^*$.
Let  
\begin{equation*}
 R_k \in \operatorname{HS}\left(F_{k+1}^n\left(\mathbb{R}^{n}\right),X\right) \text{ be given by } R_kf:=\int_{(0,\infty)^{k+1}} P_k(s)^*(B\otimes \operatorname{id}_{\mathbb R^{n^{\otimes k}}}) f(s) \ ds.
\end{equation*}
The adjoint operators of the $R_k$ are the operators 
\begin{equation*}
 R_k^* \in \operatorname{HS}\left(X,F_{k+1}^n\left(\mathbb{R}^{n} \right)\right) \text{ with } R_k^*x:= \left(B^*\otimes \operatorname{id}_{\mathbb R^{n^{\otimes k}}}\right) P_k(\bullet)x. 
\end{equation*}
If the gramians exist, then the \emph{reachability map} is defined as 
\[ R \in \operatorname{HS}\left(  F^n\left(\mathcal{H}\right),X\right) \text{ such that } (f_k)_{k \in \mathbb{N}_0} \mapsto \sum_{k=0}^{\infty}  R_k f_k.\]
Its adjoint is given by $ R^* \in \operatorname{HS}\left(X, F^n(\mathbb{R}^n)\right), \ 
X \ni x \mapsto \left(  R_k^*(x) \right)_{k \in \mathbb{N}_0}.$
\end{defi}
To see that $ R_k$ is a Hilbert-Schmidt operator we take an ONB $(e_i)$ of $F_{k+1}^n\left(\mathbb{R}^{n}\right)$, such that the $e_i$ are tensor products of an ONB of $L^2((0,\infty),\mathbb R)$ and standard unit vectors of $\mathbb R^n$, and an arbitrary ONB $(f_j)$ of $X$
\begin{equation}
\begin{split}
\label{eq:HSP}
&\left\lVert R_k \right\rVert^2_{\operatorname{HS}\left(F_{k+1}^n\left(\mathbb{R}^{n}\right),X\right)}
=\sum_{j=1}^{\operatorname{dim}(X)}\sum_{i=1}^{\infty} \left\lvert \left\langle f_j, R_k e_i \right\rangle_X \right\rvert^2=\sum_{j=1}^{\operatorname{dim}(X)}\sum_{i=1}^{\infty} \left\lvert \left\langle R_k^*f_j, e_i \right\rangle_{F_{k+1}^n\left(\mathbb{R}^{n}\right)} \right\rvert^2 \\
&= \sum_{j=1}^{\operatorname{dim}(X)} \sum_{i=1}^n \sum_{n_1,...,n_k=1}^n\int_{(0,\infty)^{k+1}} \left\lvert \left\langle f_j, P_k(s)^*(\psi_i\otimes \widehat{e}_{n_1} \otimes ...\otimes \widehat{e}_{n_k}) \right\rangle_{X} \right\rvert^2 \ ds  \\
&=  \sum_{i=1}^n \sum_{n_1,...,n_k=1}^n\int_{(0,\infty)^{k+1}} \left\lVert P_k(s)^*(\psi_i\otimes \widehat{e}_{n_1} \otimes ...\otimes \widehat{e}_{n_k}) \right\rVert_{X}^2  \ ds = \mathcal O \left( \left(M^{2} \Gamma^{2}( 2 \nu )^{-1} \right)^k \right).  
\end{split}
 \end{equation}
One can then check that the maps $W$ and $P$ indeed decompose the gramians as $\mathscr{O} =  W^* W$ and $\mathscr{P} =  R  R^*.$
We now introduce the main object of our analysis:
\begin{defi}
\label{HSop}
The \emph{Hankel operator} is the Hilbert-Schmidt operator $H:= W  R \in \operatorname{HS}\left(F^n(\mathbb{R}^n),F^n(\mathcal{H})\right).$
\end{defi}
Since any compact operator acting between Hilbert spaces possesses a singular value decomposition, we conclude that:
\begin{corr}
\label{regularitytheo} 
There are $(e_k)_{k \in \mathbb{N}} \subset F^n(\mathbb{R}^n) $ and $(f_k)_{k \in \mathbb{N}} \subset F^n(\mathcal H)$-orthonormal systems as well as singular values $(\sigma_k)_{k \in \mathbb{N}} \in \ell^2(\mathbb{N})$ such that 
\begin{equation}
\label{Henkelop}
H = \sum_{k=1}^{\infty} \sigma_k\langle \bullet,e_k \rangle_{F^n(\mathbb{R}^n)} f_k, \quad
He_k= \sigma_k f_k, \text{ and } 
H^*f_k = \sigma_k e_k.
\end{equation}
\end{corr}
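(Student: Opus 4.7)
The plan is to derive the statement as an immediate consequence of the singular value decomposition theorem for compact operators between separable Hilbert spaces, once we verify that $H$ is in the right operator ideal. Since the source $F^n(\mathbb{R}^n)$ and target $F^n(\mathcal{H})$ are both separable Hilbert spaces (countable direct sums of $L^2$-spaces with values in separable Hilbert spaces), and $H$ is Hilbert-Schmidt by Definition \ref{HSop}, the corollary is essentially a citation of standard operator theory; the main "work" is just to unpack the definition of $H$ and invoke the right textbook result.

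First, I would note that $H = WR$ is Hilbert-Schmidt as stated in Definition \ref{HSop}: indeed, $R \in \operatorname{HS}(F^n(\mathbb{R}^n),X)$ by the explicit Hilbert--Schmidt norm computation \eqref{eq:HSP} (which uses Assumption \ref{generalass} to ensure summability of the geometric-type series in $M^2\Gamma^2(2\nu)^{-1}<1$), and $W \in \mathcal L(X, F^n(\mathcal{H}))$ is bounded by the estimate $\|W_k\| = \mathcal O((M\Gamma(2\nu)^{-1/2})^k)$ given in Definition \ref{defWk}. Since the Hilbert--Schmidt operators form a two-sided ideal inside the bounded operators, the composition $WR$ lies in $\operatorname{HS}(F^n(\mathbb{R}^n),F^n(\mathcal{H}))$. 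In particular, $H$ is compact.

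Next I would apply the standard singular value decomposition for compact operators between separable Hilbert spaces to $H$: there exist orthonormal systems $(e_k)_{k\in\mathbb{N}} \subset F^n(\mathbb{R}^n)$ and $(f_k)_{k\in\mathbb{N}} \subset F^n(\mathcal{H})$, together with a nonincreasing sequence $\sigma_k \ge 0$ tending to zero, such that
\begin{equation*}
H = \sum_{k=1}^{\infty} \sigma_k \langle \bullet, e_k \rangle_{F^n(\mathbb{R}^n)} f_k,
\end{equation*}
with convergence in operator norm. The identities $He_k = \sigma_k f_k$ and $H^*f_k = \sigma_k e_k$ then follow by direct evaluation, using orthonormality of $(e_k)$ and $(f_k)$. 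Concretely, $(e_k)$ can be taken to be an orthonormal basis of eigenvectors of the compact self-adjoint operator $H^*H$ with eigenvalues $\sigma_k^2$, and $f_k := \sigma_k^{-1}He_k$ for $\sigma_k>0$.

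Finally, the $\ell^2$-summability $(\sigma_k) \in \ell^2(\mathbb{N})$ is exactly the Hilbert--Schmidt condition $\sum_k \sigma_k^2 = \|H\|_{\operatorname{HS}}^2 < \infty$, which we already established. Thus the only non-routine input is the verification that $H$ is Hilbert--Schmidt; everything else is standard spectral theory, and I do not anticipate a genuine obstacle.
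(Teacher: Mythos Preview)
Your proposal is correct and matches the paper's approach exactly: the paper simply remarks that ``since any compact operator acting between Hilbert spaces possesses a singular value decomposition'' the corollary follows, with the $\ell^2$-summability of the singular values being the Hilbert--Schmidt property already established in Definition~\ref{HSop}. Your write-up is more detailed than the paper's one-line justification, but the content is identical.
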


We now state a sufficient condition under which $H$ becomes a trace class operator such that $(\sigma_k)_{k \in \mathbb{N}} \in \ell^1(\mathbb{N}).$
\begin{lemm}
\label{traceclasslemma}
If $\mathcal H\simeq\mathbb{R}^m$ for any $m \in \mathbb N$ then $ W$ is a Hilbert-Schmidt operator just like $R$. Consequently, $H =  W  R \text{ and } \mathscr{O}=  W^* W $
are both of trace class.
\end{lemm}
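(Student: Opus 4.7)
The plan is to mimic the Hilbert--Schmidt computation already carried out for $R_k$ in \eqref{eq:HSP}, exploiting the finite-dimensionality of $\mathcal H$ to ensure the trace appearing in $\|W_k\|_{\operatorname{HS}}^2$ actually converges, and then conclude trace-class statements from the standard facts that the product of two Hilbert--Schmidt operators is trace class and that $W^{*}W$ is trace class whenever $W$ is Hilbert--Schmidt.

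Concretely, I would first write
\[
\|W_k\|_{\operatorname{HS}}^{2}=\int_{(0,\infty)^{k+1}}\|O_k(s)\|_{\operatorname{HS}(X,\,\mathcal H\otimes(\mathbb R^n)^{\otimes k})}^{2}\,ds,
\]
and then, by the definition of $O_k$ in Definition \ref{gramians}, split the Hilbert--Schmidt norm of $O_k(s)$ as a sum over $n_1,\dots,n_k\in\{1,\dots,n\}$ of Hilbert--Schmidt norms of operators $x\mapsto CT(t_1)N_{n_1}T(t_2)\cdots N_{n_k}T(t_{k+1})x$ mapping $X$ into $\mathcal H$. Since $\mathcal H\simeq\mathbb R^m$, each such operator has rank at most $m$, so its Hilbert--Schmidt norm is bounded by $\sqrt m$ times its operator norm. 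Using the semigroup bound $\|T(t)\|\le Me^{-\nu t}$ together with $\|N_{n_l}\|$ on each factor, this gives
\[
\|O_k(s)\|_{\operatorname{HS}}^{2}\le m\,\|C\|^{2}M^{2(k+1)}e^{-2\nu(s_1+\cdots+s_{k+1})}\!\!\sum_{n_1,\dots,n_k=1}^{n}\prod_{l=1}^{k}\|N_{n_l}\|^{2}.
\]

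Integrating over $(0,\infty)^{k+1}$ and using $\sum_{n_l=1}^{n}\|N_{n_l}\|^{2}=\sum_{i=1}^{n}\|N_i N_i^{*}\|=\Gamma^{2}$ yields
\[
\|W_k\|_{\operatorname{HS}}^{2}\le\frac{m\|C\|^{2}M^{2}}{2\nu}\left(\frac{M^{2}\Gamma^{2}}{2\nu}\right)^{k}.
\]
Because $W=(W_k)_{k\in\mathbb N_0}$ takes values in the orthogonal direct sum $F^n(\mathcal H)$, the Hilbert--Schmidt norms add up, and Assumption \ref{generalass} makes the geometric series convergent, so $W\in\operatorname{HS}(X,F^n(\mathcal H))$. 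The analogous computation \eqref{eq:HSP} already gives $R\in\operatorname{HS}$.

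With both $W$ and $R$ Hilbert--Schmidt, the ideal property $\operatorname{HS}\cdot\operatorname{HS}\subset\operatorname{TC}$ shows $H=WR\in\operatorname{TC}(F^n(\mathbb R^n),F^n(\mathcal H))$, and $\mathscr O=W^{*}W$ is trace class with $\|\mathscr O\|_{\operatorname{TC}}=\|W\|_{\operatorname{HS}}^{2}$. The only subtle step is the geometric decay of $\|W_k\|_{\operatorname{HS}}^{2}$; here the finite-dimensionality of $\mathcal H$ is essential, since without the rank bound one would only obtain the operator-norm estimate $\|W_k\|=\mathcal O((M\Gamma(2\nu)^{-1/2})^k)$ noted in Definition \ref{defWk}, which is insufficient to deduce that $W$ is Hilbert--Schmidt when $\dim X=\infty$.
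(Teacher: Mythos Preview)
Your proof is correct and follows essentially the same route as the paper. Both arguments reduce to establishing the geometric decay $\|W_k\|_{\operatorname{HS}}^2=\mathcal O\bigl((M^2\Gamma^2(2\nu)^{-1})^k\bigr)$ by exploiting that the target $\mathcal H\otimes(\mathbb R^n)^{\otimes k}$ is finite-dimensional; the paper does this by expanding $\|O_k(t)\|_{\operatorname{HS}}^2=\sum_{i,i_1,\dots,i_k}\|O_k(t)^{*}(\widehat e_i\otimes\widehat e_{i_1}\otimes\cdots\otimes\widehat e_{i_k})\|_X^2$ via the Carleman operator characterisation from Weidmann, whereas you obtain the same bound from the rank inequality $\|S\|_{\operatorname{HS}}\le\sqrt m\,\|S\|$ for $S\colon X\to\mathbb R^m$---these are two phrasings of the same computation.
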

\begin{proof} 
Since for any $i \in \left\{1,..,m\right\}$ and $i_1,...,i_k \in \left\{1,..,n \right\}$ the operator 
\[X \ni x \mapsto \left\langle \widehat{e}_i \otimes \widehat{e}_{i_1} \otimes...\otimes \widehat{e}_{i_k},W_kx \right\rangle_{\mathbb{R}^m \otimes \mathbb R^{n^{\otimes k}}}=:Q_{i,i_1,...,i_k}(x) \] is a \emph{Carleman operator}, we can apply \cite[Theorem $6.12$(iii)]{Wei} that characterizes Carleman operators of Hilbert-Schmidt type. The statement of the Lemma follows from the summability of
\begin{equation*}
\begin{split}
&\left\lVert W_k \right\rVert_{\operatorname{HS}}^2
= \sum_{i=1}^m  \sum_{i_1,..,i_k=1}^n \left\lVert  Q_{i,i_1,...,i_k}  \right\rVert_{\operatorname{HS}(X,L^2((0,\infty)^{k+1},\mathbb R))}^2 \\
&\le \sum_{i=1}^m  \sum_{i_1,..,i_k=1}^n  \int_{(0,\infty)^{k+1}} \left\lVert O_k(t)^*\left(\widehat{e}_i \otimes \widehat{e}_{i_1} \otimes...\otimes \widehat{e}_{i_k}\right) \right\rVert^2_{X} \ dt =\mathcal O\left(\left(M^{2} \Gamma^{2} (2 \nu )^{-1}\right)^{k}\right).
\end{split}
\end{equation*}
\end{proof}
In the rest of this section, we discuss immediate applications of our preceding construction. We start by introducing the truncated gramians.
\begin{defi}
\label{trungram}
The \emph{k-th order truncation} of the gramians are the first $k$ summands of the gramians, i.e. $\mathscr{O}^{(k)}:=\sum_{i=0}^{k-1}\mathscr{O}_i$ and $\mathscr{P}^{(k)}:=\sum_{i=0}^{k-1}\mathscr{P}_i.$
The associated $k$-th order truncated Hankel operator is $H^{(k)}f:=( W_i \sum_{j=0}^{k-1}  R_jf_j)_{i \in \{0,...,k-1\}}$.
\end{defi}
The proof of Proposition \ref{singularvalueconv} follows then from our preliminary work very easily:
\begin{proof}[Proof of Proposition \ref{singularvalueconv}]
From \cite[Corollary $2.3$]{Krein} it follows that for any $i \in \mathbb{N}$ the difference of singular values can be bounded as $\left\lvert \sigma_{i}-\sigma^k_{i} \right\rvert \le \left\lVert H-H^{(k)} \right\rVert \le \left\lVert H-H^{(k)} \right\rVert_{\operatorname{HS}}$ and by the inverse triangle inequality $\left\lvert \left\lVert \sigma \right\rVert_{\ell^2}-\left\lVert \sigma^{k} \right\rVert_{\ell^2} \right\rvert \le\left\lVert H-H^{(k)} \right\rVert_{\operatorname{HS}}$. Thus, it suffices to bound by \eqref{eq:HSP} and Definition \ref{defWk}
\begin{equation*}
\begin{split}
\left\lVert H-H^{(k)} \right\rVert_{\operatorname{HS}}^2 = \sum_{(i,j) \in \mathbb{N}_0^2 \backslash \{0,...,k-1\}^2} \left\lVert H_{ij} \right\rVert^2_{\operatorname{HS}} &=  \sum_{(i,j) \in \mathbb{N}_0^2 \backslash \{0,...,k-1\}^2} \left\lVert  W_i \right\rVert^2 \left\lVert  R_j \right\rVert^2_{\operatorname{HS}} \\
&= \mathcal O \left(\left(M^{2} \Gamma^{2} ( 2 \nu )^{-1} \right)^{2k} \right).
\end{split}
\end{equation*}
\end{proof}
Next, we state the proof of Proposition \ref{finapprox} on the approximation by subsystems. The Hankel operator for the subsystem on $V_i$ is then just given by $H_{V_i}:=WR_{V_i}$ where
\[R_{V_i}(f):=\sum_{k=0}^{\infty} \int_{(0,\infty)^{k+1}} P_k(s)^*(P_{V_i}B\otimes \operatorname{id}_{\mathbb R^{n^{\otimes k}}}) f_k(s) \ ds\]
with $P_{V_i}$ being the orthogonal projection onto $V_i.$
\begin{proof}[Proof of Proposition \ref{finapprox}]
Using elementary estimates 
\[\left\lVert H-H_{V_i} \right\rVert_{\operatorname{TC}} \le  \left\lVert W \right\rVert_{\operatorname{HS}}\left\lVert R-R_{V_i} \right\rVert_{\operatorname{HS}}\text{ and }\left\lVert H-H_{V_i} \right\rVert_{\operatorname{HS}} \le  \left\lVert W \right\rVert\left\lVert R-R_{V_i} \right\rVert_{\operatorname{HS}},\]
it suffices to show $\operatorname{HS}$-convergence of $R_{V_i}$ to $R.$ This is done along the lines of \eqref{eq:HSP}.
\end{proof}

\subsection{Convergence of singular vectors}
The convergence of singular values has already been addressed in Proposition \ref{singularvalueconv}.
For the convergence of singular vectors, we now assume that there is a family of compact operators $H(m) \in \mathcal L \left(F^n\left(\mathbb R^n \right),F^n\left(\mathcal H \right) \right)$ converging in operator norm to $H$. By compactness, every operator $H(m)$ has a singular value decomposition $H(m)= \sum_{k=1}^{\infty} \sigma_k(m) \langle \bullet,e_{k}(m) \rangle f_{k}(m).$

\begin{ass}
Without loss of generality let the singular values be ordered as $\sigma_1(m) \ge \sigma_2(m) \ge..$ . 
Furthermore, for the rest of this section, all singular values of $H$ are assumed to be non-zero and non-degenerate, i.e. all eigenspaces of $HH^*$ and $H^*H$ are one-dimensional. \end{ass}
\begin{lemm}
\label{SVC}
Let the family of compact operators $(H(m))$ converge to the Hankel operator $H$ in operator norm, then the singular vectors convergence in norm as well. 
\end{lemm}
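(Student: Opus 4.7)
The plan is to reduce to a self-adjoint spectral perturbation problem and apply the Riesz projection method. The nondegeneracy assumption ensures that every singular value corresponds to a one-dimensional eigenspace of $H^*H$ (and of $HH^*$), so spectral projections will be rank-one and identify singular vectors up to phase.

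First I would observe that since $H(m)\to H$ in operator norm and all operators are bounded, we have $H(m)^*H(m)\to H^*H$ and $H(m)H(m)^*\to HH^*$ in operator norm. Both limits are compact, self-adjoint and positive, with simple strictly positive eigenvalues $\sigma_k^2$ accumulating only at $0$. Fix $k$; by nondegeneracy and compactness, $\sigma_k^2$ is isolated in the spectrum of $H^*H$, so we can enclose it in a positively oriented circle $\gamma_k\subset\mathbb{C}$ such that no other point of the spectrum of $H^*H$ lies inside or on $\gamma_k$.

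Next, I would use the standard Riesz projection formula
\begin{equation*}
P_k=\frac{1}{2\pi i}\oint_{\gamma_k}(z-H^*H)^{-1}\,dz,\qquad P_k(m)=\frac{1}{2\pi i}\oint_{\gamma_k}(z-H(m)^*H(m))^{-1}\,dz.
\end{equation*}
By norm continuity of the resolvent on compact subsets of the resolvent set, together with norm convergence of $H(m)^*H(m)$, for all $m$ large enough $\gamma_k$ lies in the resolvent set of $H(m)^*H(m)$ and $(z-H(m)^*H(m))^{-1}\to(z-H^*H)^{-1}$ uniformly in $z\in\gamma_k$. Hence $P_k(m)\to P_k$ in operator norm. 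By convergence of eigenvalues for compact self-adjoint operators (e.g.\ Weyl's inequality, as used in the proof of Proposition~\ref{singularvalueconv}), the only eigenvalue of $H(m)^*H(m)$ enclosed by $\gamma_k$ for large $m$ is $\sigma_k(m)^2$, so $P_k(m)$ is the rank-one orthogonal projection onto $\operatorname{span}(e_k(m))$.

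The last step turns norm convergence of rank-one projections into norm convergence of unit vectors. Since $P_k x=\langle x,e_k\rangle e_k$ and $P_k(m)x=\langle x,e_k(m)\rangle e_k(m)$, we may replace $e_k(m)$ by a unimodular multiple so that $\langle e_k(m),e_k\rangle\ge 0$. Then
\begin{equation*}
\|e_k(m)-e_k\|^2=2-2\langle e_k(m),e_k\rangle\quad\text{and}\quad |\langle e_k(m),e_k\rangle|^2=\langle P_k(m)e_k,e_k\rangle\to \langle P_k e_k,e_k\rangle=1,
\end{equation*}
hence $e_k(m)\to e_k$ in $F^n(\mathbb{R}^n)$. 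The same Riesz-projection argument applied to $HH^*$ gives the analogous convergence of output singular vectors; alternatively, once $e_k(m)\to e_k$, one can invoke $H(m)e_k(m)=\sigma_k(m)f_k(m)$ together with $\sigma_k(m)\to\sigma_k>0$ and $H(m)\to H$ in norm to conclude $f_k(m)\to f_k$ in $F^n(\mathcal{H})$.

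The only delicate point is the choice of phase: singular vectors are intrinsically defined only up to a unit scalar, so the statement of convergence must be understood after a suitable phase normalization, which is performed exactly as above. Everything else is a direct application of norm resolvent convergence and the fact that the limiting eigenvalues are simple and isolated.
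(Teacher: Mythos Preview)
Your proof is correct and takes a genuinely different route from the paper. You pass to the self-adjoint operators $H^*H$ and $HH^*$, use norm resolvent convergence together with Riesz spectral projections, and exploit that the nondegeneracy assumption makes each limiting projection rank one; norm convergence of rank-one projections then yields norm convergence of (phase-normalised) unit eigenvectors. The paper instead follows the argument of \cite[Appendix~2]{CGP}: it writes $e_j = r(m)e_j(m) + x_j(m)$ with $x_j(m)\perp e_j(m)$ and derives the explicit bound
\[
\left\lVert x_j(m)\right\rVert^2 \le \frac{\sigma_j^2 - \bigl(\sigma_j - 2\left\lVert H_j - H_j(m)\right\rVert\bigr)^2}{\sigma_j^2 - \sigma_{j+1}^2},
\]
where $H_j$ and $H_j(m)$ are the operators with the first $j-1$ singular triples removed; the argument is therefore inductive in $j$. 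Your spectral-calculus approach is more conceptual and handles each index independently, while the paper's direct perturbation argument has the advantage of an explicit quantitative rate in terms of the spectral gap $\sigma_j^2 - \sigma_{j+1}^2$ and the operator-norm error. Both arguments require the same phase-normalisation caveat that you correctly flag.
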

\begin{proof}[Proof of Lemma \ref{SVC}]
We formulate the proof only for singular vectors $(e_j)$ since the arguments for $(f_j)$ are analogous. We start by writing $e_j=r(m)e_j(m)+x_j(m)$ where $\langle e_j(m),x_j(m) \rangle=0.$ Then, the arguments stated in the proof of  \cite[Appendix 2]{CGP} show that for $m$ sufficiently large (the denominator is well-defined as the singular values are non-degenerate) 
\begin{equation*} \begin{split}
&\left\lVert x_j(m) \right\rVert_{F^n(\mathbb{R}^n)}^2 \le \frac{\sigma_j^2-\left(\sigma_j-2\left\lVert H_j-H_j(m) \right\rVert_{\mathcal L\left(F^n\left(\mathbb R^n \right),F^n\left(\mathcal H \right) \right)}\right)^2}{\sigma_j^2-\sigma_{j+1}^2}  \xrightarrow[m \rightarrow \infty]{} 0
\end{split} \end{equation*}
where $H_j:=H-\sum_{k=0}^{j-1} \sigma_k \langle\bullet,e_k \rangle f_k$ and $H_j(m):=H(m)-\sum_{k=0}^{j-1} \sigma_k(m) \langle\bullet,e_k(m) \rangle f_k(m). $
\end{proof}

\section{Global error estimates}\label{sec:errorBound}
We start by defining a \emph{control tensor} $U_k(s) \in \mathcal L\left(\mathcal H \otimes \mathbb R^{n^{\otimes k}}, \mathcal H\right) $ 
\[ U_k(s):=\sum_{i_1,..,i_k=1}^n u_{i_1}(s_1)\cdot...\cdot u_{i_k}(s_k)\operatorname{id}_{\mathcal H} \otimes \  \left\langle \widehat{e}_{i_1}\otimes ...\otimes \widehat{e}_{i_k}, \bullet \right\rangle. \]
Using sets $\Delta_k(t):=\{(s_1,...,s_k) \in \mathbb{R}^k; 0 \le s_k \le ...\le s_1 \le t\}$, we can decompose the output map $(0,\infty) \ni t \mapsto C\varphi(t)$ with $\varphi$ as in \eqref{mild solution} for controls $\left\lVert u \right\rVert_{L^2((0,\infty),(\mathbb R^n, \left\lVert \bullet \right\rVert_{\infty}))}  < \frac{\sqrt{2 \nu}}{M \Xi}$ and $\Xi:=\sum_{i=1}^n \left\lVert N_i \right\rVert$ according to Lemma \ref{Voltconv} into two terms
$C\varphi(t) =  K_1(t)+K_2(t)$ such that
\begin{equation}
\begin{split}
\label{eq:G}
K_1(t)&:= \sum_{k=1}^{\infty} \int_{\Delta_k(t)} U_k(s) \left(O_{k}(t-s_1,.,s_{k-1}-s_k,s_k)\varphi_0\right)  \ ds + CT(t) \varphi_0 \text{ and } \\
K_2(t)&:= \sum_{k=1}^{\infty} \int_{\Delta_k(t)}  U_k(s) \left(\sum_{i=1}^nO_{k-1}(t-s_1,s_1-s_2,...,s_{k-1}-s_k)\psi_i \otimes \widehat{e}_i \right) \  ds.
\end{split}
\end{equation}
The first term $K_1$ is determined by the initial state $\varphi_0$ of the evolution problem \eqref{evoleq}. If this state is zero, the term $K_1$ vanishes. 
The term $K_2$ on the other hand captures the \emph{intrinsic} dynamics of equation \eqref{evoleq}. 
A technical object that links the dynamics of the evolution equation with the operators from the balancing method are the Volterra kernels we study next.
\begin{defi}
\label{def:Voltker}
The \emph{Volterra kernels} associated with \eqref{evoleq} are the functions
\begin{equation*}
\begin{split}
&h_{k,j} \in L^2\left((0,\infty)^{k+j+1},\operatorname{HS}\left(\mathbb R^{n^{\otimes (j+1)}}, \mathcal H \otimes \mathbb R^{n^{\otimes k}} \right)\right)\\
&h_{k,j}(\sigma_0,...,\sigma_{k}+\sigma_{k+1},..,\sigma_{k+j+1}):=O_{k}(\sigma_0,...,\sigma_k)P_{j}^*(\sigma_{k+j+1},...,\sigma_{k+1})(B\otimes \operatorname{id}_{\mathbb R^{n^{\otimes {j}}}}).
\end{split}
\end{equation*}
\end{defi}
The Volterra kernels satisfy an invariance property for all $p,q,k,j \in \mathbb N_0$ such that $p+q=k+j:$
\begin{equation}
\label{eq:invp}
 \left\lVert   h_{k,j} \right\rVert_{L^{1}_{k+1}L^{2}_{k+j}\left(\operatorname{HS}\left(\mathbb R^{n^{\otimes (j+1)}},\mathcal H \otimes \mathbb R^{n^{\otimes k}}\right)\right)}  =  \left\lVert   h_{p,q} \right\rVert_{L^{1}_{k+1}L^{2}_{k+j}\left(\operatorname{HS}\left(\mathbb R^{n^{\otimes (q+1)}},\mathcal H \otimes \mathbb R^{n^{\otimes p}}\right)\right)}. 
 \end{equation}
The Volterra kernels are also the integral kernels of the components of the Hankel operator
\begin{equation*} 
\begin{split} 
\left( W_k R_j f \right)(s_0,...,s_k)  = \int_{(0,\infty)^{j+1}} h_{k,j} (s_0,...,s_k+t_{1},...,t_{j+1}) f(t) \ dt.
\end{split}
\end{equation*}
\begin{rem}
In particular the kernels $h_{k,0}$ appear in the definition of the $\mathscr H^2$-system norm introduced in \cite[Eq. 15]{ZL}
\begin{equation*}
\begin{split}
\left\lVert \Sigma \right\rVert^2_{\mathscr{H}^2}&:= \sum_{k=0}^{\infty} \left\lVert h_{k,0} \right\rVert^2_{L^2\left((0,\infty)^{k+1},\operatorname{HS}\left(\mathbb R^{n},\mathcal H \otimes \mathbb R^{n^{\otimes k}}\right)\right)} \\
&=\sum_{k=0}^{\infty}  \int_{(0,\infty)^{k+1}} \sum_{n_1,...,n_{k}=1}^n \left\lVert CT(t_1)  \prod_{l=2}^{k+1} \left(N_{n_{l-1}}T(t_l) \right)B  \right\rVert_{\operatorname{HS}(\mathbb R^n, \mathcal H)}^2 \ dt
\end{split}
\end{equation*}
for which robust numerical algorithms with strong $\mathscr H^2$-error performance are available \cite{BB11}.

This system norm can also be expressed directly in terms of the gramians 
\[\left\lVert \Sigma \right\rVert^2_{\mathscr{H}^2} = \tr\left(BB^*\mathscr O\right)= \tr\left(C^*C\mathscr P\right)\]
which is well-defined as $B^*B$ and $\mathscr P$ are both trace class operators.
\end{rem}
In \cite{BD2} the $k$-th order \emph{transfer function} $G_k$ has been introduced as the $k+1$-variable Laplace transform of the Volterra kernel $h_{k,0}$
\begin{equation*}
G_{k}(s):=\int_{(0,\infty)^{k+1}} h_{k,0}(t)e^{- \langle s, t \rangle} \ dt.  
\end{equation*}
Using mixed Hardy norms as defined in \eqref{eq: mix}, the Paley-Wiener theorem implies the following estimate for $i \in \left\{1,..,k+1 \right\}$
\begin{equation}
\begin{split}
\label{Fourierestimate}
&\left\lVert G_{k} \right\rVert_{\mathscr H^{\infty}_{i} \mathscr H^{2}_{k}\left(\operatorname{HS}\left(\mathbb R^n,\mathcal H \otimes \mathbb R^{n^{\otimes k}}\right)\right)} \\
& \le \int_{0}^{\infty} \left\lVert \int_{(0,\infty)^{k}}h_{k,0}^{(i)}(s,\sigma)e^{-\langle \bullet, s \rangle} \ ds \right\rVert_{\mathscr{H}^2\left((0,\infty)^{k},\operatorname{HS}\left(\mathbb R^n,\mathcal H \otimes \mathbb R^{n^{\otimes k}}\right)\right)} \ d\sigma \\
&=\left\lVert h_{k,0}^{(i)} \right\rVert_{L^{1}_{i}L^{2}_{k}\left(\operatorname{HS}\left(\mathbb R^n,\mathcal H \otimes \mathbb R^{n^{\otimes k}}\right)\right)}.
\end{split}
\end{equation}

For two systems $\Sigma$ and $\widetilde \Sigma$ satisfying Assumption \ref{generalass} with the same number of controls and the same output space $\mathcal H$, we then define the \emph{difference Volterra kernel} and the \emph{difference Hankel operator} $\Delta(h):=h-\widetilde h$ and $\Delta(H):= H -\widetilde H =\left( W_i R_j - \widetilde{W_i} \widetilde{ R_j} \right)_{ij}.$

In the following Lemma we derive a bound on the mixed $L^1$-$L^2$ norm of the Volterra kernels:

\begin{lemm}
\label{Hankelestimate}
Consider two systems satisfying Assumption \ref{generalass} with the same number of controls and the same output space $\mathcal H \simeq \mathbb R^m$ such that $H$ is trace class (Lemma \ref{traceclasslemma}). Then the Volterra kernels $h_{k,j}$ satisfy
\begin{equation*}
 \left\lVert  \Delta( h_{k,j}) \right\rVert_{L^{1}_{k+1}L^{2}_{k+j}\left(\operatorname{HS}\left(\mathbb R^{n^{\otimes (j+1)}},\mathbb R^m \otimes \mathbb R^{n^{\otimes k}}\right)\right)} \le 2 \left\lVert \Delta(  W_k  R_j) \right\rVert_{\operatorname{TC}\left(F^n_{j+1}(\mathbb R^n),F^n_{k+1}\left(\mathbb R^m\right)\right)}.
\end{equation*} 
\end{lemm}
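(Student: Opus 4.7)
The plan is to use the singular value decomposition of the trace-class operator $T := \Delta(W_k R_j)$ together with the translation invariance of its integral kernel in the junction variable. Writing $T = \sum_m \mu_m \langle \cdot, e_m\rangle_{F^n_{j+1}(\mathbb R^n)} f_m$ with $\sum_m \mu_m = \|T\|_{\operatorname{TC}}$ and $(e_m), (f_m)$ orthonormal systems in $F^n_{j+1}(\mathbb R^n)$ and $F^n_{k+1}(\mathbb R^m)$, the integral kernel of $T$ reads
$$K\bigl((s_0,\ldots,s_k),(t_1,\ldots,t_{j+1})\bigr) = \sum_m \mu_m \, f_m(s_0,\ldots,s_k) \otimes e_m(t_1,\ldots,t_{j+1})^*$$
as an $\operatorname{HS}(\mathbb R^{n^{\otimes(j+1)}}, \mathbb R^m \otimes \mathbb R^{n^{\otimes k}})$-valued function. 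On the other hand, the kernel identification recorded in the excerpt gives $K = \Delta(h_{k,j})(s_0,\ldots,s_{k-1}, s_k+t_1, t_2,\ldots,t_{j+1})$, so $K$ only depends on the sum $s_k + t_1 =: \tau$. I will exploit this by making the symmetric choice $s_k = t_1 = \tau/2$ and reading off a singular value representation for $\Delta(h_{k,j})$ in the junction variable $\tau$.

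With this split, for almost every $\tau > 0$ the sections $f_m(\,\cdot\,,\tau/2)$ and $e_m(\tau/2,\,\cdot\,)$ are well-defined in $L^2$ (by Fubini), and
$$\Delta(h_{k,j})(\,\cdot\,,\tau,\,\cdot\,) = \sum_m \mu_m \, f_m(\,\cdot\,,\tau/2) \otimes e_m(\tau/2,\,\cdot\,)^*$$
as a function of the remaining $k+j$ coordinates. Minkowski's inequality for the $L^2$-norm in $(s_0,\ldots,s_{k-1},t_2,\ldots,t_{j+1})$, combined with the tensor-product identity $\|u \otimes v^*\|_{\operatorname{HS}} = \|u\|\|v\|$, then yields
$$\bigl\|\Delta(h_{k,j})(\,\cdot\,,\tau,\,\cdot\,)\bigr\|_{L^2} \le \sum_m \mu_m \, \|f_m(\,\cdot\,,\tau/2)\|_{L^2} \, \|e_m(\tau/2,\,\cdot\,)\|_{L^2}.$$

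Integrating this in $\tau$, substituting $u = \tau/2$ (which produces the factor $2$ in the final bound), and then applying Cauchy--Schwarz together with Fubini gives
$$\int_0^\infty \|f_m(\,\cdot\,,u)\|_{L^2}\,\|e_m(u,\,\cdot\,)\|_{L^2}\,du \le \|f_m\|_{F^n_{k+1}(\mathbb R^m)} \, \|e_m\|_{F^n_{j+1}(\mathbb R^n)} = 1,$$
so that summing over $m$ produces $2 \sum_m \mu_m = 2\|T\|_{\operatorname{TC}}$, which is exactly the claimed bound. The only delicate point in the argument is the pointwise-in-$\tau$ identity for $f_m$ and $e_m$: since these are $L^2$-singular vectors, they are defined only up to null sets and have no a priori pointwise sections. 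Fubini resolves this for a.e.~$\tau$, which is enough both for Minkowski's inequality applied levelwise and for the outer $L^1$-integration in $\tau$; the rest is routine SVD bookkeeping.
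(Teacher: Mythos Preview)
Your approach via the SVD of $T=\Delta(W_kR_j)$ is natural and different from the paper's, but the step where you restrict the kernel identity to the diagonal $s_k=t_1=\tau/2$ is a genuine gap, and Fubini does \emph{not} close it. The SVD gives
\[
K(\,\cdot\,,s_k,t_1,\,\cdot\,)=\sum_m\mu_m\,f_m(\,\cdot\,,s_k)\otimes e_m(t_1,\,\cdot\,)^*
\]
only as an equality in $L^2$ of \emph{all} variables, hence a.e.\ in $(s_k,t_1)\in(0,\infty)^2$; the diagonal $\{s_k=t_1\}$ has two--dimensional Lebesgue measure zero, so an a.e.\ identity says nothing there. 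Fubini lets you make sense of the slices $f_m(\,\cdot\,,\sigma)$ and $e_m(\sigma,\,\cdot\,)$ for a.e.\ $\sigma$, but it cannot manufacture the equality $\Delta(h_{k,j})(\,\cdot\,,2\sigma,\,\cdot\,)=\sum_m\mu_m f_m(\,\cdot\,,\sigma)\otimes e_m(\sigma,\,\cdot\,)^*$ at those $\sigma$. (A trivial counterexample to the underlying logic: $G(s,t)=\indic_{\{s=t\}}$ vanishes a.e.\ in $(s,t)$ but equals $1$ on the diagonal.)

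Your argument can be repaired, but it costs exactly the kind of approximation that the paper performs. One clean fix: from the a.e.\ inequality
\[
\bigl\|\Delta(h_{k,j})(\,\cdot\,,s_k+t_1,\,\cdot\,)\bigr\|_{L^2}\ \le\ \sum_m\mu_m\,F_m(s_k)\,E_m(t_1),
\qquad F_m(s):=\|f_m(\,\cdot\,,s)\|_{L^2},\ E_m(t):=\|e_m(t,\,\cdot\,)\|_{L^2},
\]
integrate over the strip $|s_k-t_1|<\varepsilon/2$, divide by $\varepsilon$, and use the change of variables $u=s_k+t_1$, $v=s_k-t_1$. The left side tends to $\tfrac12\int_0^\infty\|\Delta(h_{k,j})(\,\cdot\,,u,\,\cdot\,)\|_{L^2}\,du$, while on the right one checks by Cauchy--Schwarz that $\varepsilon^{-1}\iint_{|s-t|<\varepsilon/2}F_m(s)E_m(t)\,ds\,dt\le\|F_m\|_{L^2}\|E_m\|_{L^2}=1$ uniformly in $\varepsilon$, giving the bound $\sum_m\mu_m=\|T\|_{\operatorname{TC}}$ and hence the factor $2$ after passing to the limit. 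This is precisely the mollification that your ``set $s_k=t_1=\tau/2$'' tries to shortcut.

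The paper argues from the dual side: it exploits the characterisation \eqref{tracenorm} of the trace norm and \emph{constructs} orthonormal test systems $s_{z,i},t_{z,i}$ built from $\alpha^{-1/2}\indic_{[z\alpha,(z+1)\alpha)}$ in the junction variable tensored with singular vectors of the slice $\Delta\!\left(h_{k,j}^{(k+1)}(\,\cdot\,,2z\alpha)\right)$, then passes to the limit $\alpha\downarrow 0$ using strong continuity of translations. Both routes ultimately need an averaging step near the diagonal; your SVD route is conceptually shorter once that step is supplied, whereas the paper's test-vector route makes the role of the trace norm more explicit and sidesteps any claim about the singular vectors themselves.
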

\begin{proof}
Given the difference Volterra kernel $\Delta(h_{k,j})$ associated with $\Delta( W_k R_j).$ 

For every $z \in \mathbb{N}_0$ and $\alpha>0$ fixed, we introduce the family of sesquilinear forms $(L_{z,\alpha})$
\begin{equation*} 
\begin{split}
\label{form}
&L_{z,\alpha}:  F_k^1\left(\mathbb{R}^{m}\otimes \mathbb R^{n^{\otimes k}}\right) \oplus F_j^1\left(\mathbb{R}^{n^{\otimes (j+1)}}\right) \rightarrow \mathbb{R}  \\
& (f,g) \mapsto \int_{(0,\infty)^{k+j}} \left\langle f(s_1,..,s_{k}),\Delta\left(h^{(k+1)}_{k,j}(s,2z\alpha)\right) g(s_{k+1},..,s_{k+j}) \right\rangle_{\mathbb{R}^{m}\otimes \mathbb R^{n^{\otimes k}}} \ ds.  
\end{split}  
\end{equation*}
Since $\Delta\left(h^{(k+1)}_{k,j}(\bullet,2z\alpha)\right) \in F_k^1\left(\mathbb{R}^{m}\otimes \mathbb R^{n^{\otimes k}}\right) \otimes F_j^1\left(\mathbb{R}^{n^{\otimes (j+1)}}\right)=: Z$ we can define a Hilbert-Schmidt operator \footnote{ For separable Hilbert spaces $H_1$ and $H_2$ there is the isometric isomorphism $H_1 \otimes H_2 \equiv \operatorname{HS}(H_1^*,H_2).$} of unit Hilbert-Schmidt norm given by $Q : F_j^1\left(\mathbb{R}^{n^{\otimes (j+1)}}\right) \rightarrow F_k^1\left(\mathbb{R}^{m}\otimes \mathbb R^{n^{\otimes k}}\right)$
\begin{equation*}
\begin{split}
&(Q\varphi)(s):=\int_{(0,\infty)^{j}} \frac{\Delta \left(h^{(k+1)}_{k,j}((s,t),2z\alpha)\right)}{\left\lVert \Delta\left(h^{(k+1)}_{k,j}(\bullet,2z\alpha)\right)\right\rVert_{Z}} \varphi (t) \ dt.
\end{split}
\end{equation*}
Doing a singular value decomposition of $Q$ yields orthonormal systems $f_{z,i} \in F_k^1\left(\mathbb{R}^{m}\otimes \mathbb R^{n^{\otimes k}}\right)$, $g_{z,i} \in F_j^1\left(\mathbb{R}^{n^{\otimes (j+1)}}\right)$, parametrized by $i \in \mathbb N,$ and singular values $\sigma_{z,i} \in [0,1]$ such that for any $\delta>0$ given there is $N(\delta)$ large enough with
\[ \left\lVert \frac{\Delta\left(h^{(k+1)}_{k,j}(\bullet,2z\alpha)\right)}{\left\lVert \Delta\left(h^{(k+1)}_{k,j}(\bullet,2z\alpha)\right)\right\rVert_{Z}} - \sum_{i=1}^{N(\delta)} \sigma_{z,i} (f_{z,i} \otimes g_{z,i}  )\right\rVert_{Z}<\delta. \]

Let $\varepsilon>0,$ then for $M$ sufficiently large $\int_{M}^{\infty} \left\lVert \Delta\left(h^{(k+1)}_{k,j}(\bullet,v)\right)\right\rVert_{Z} \ dv < \varepsilon.$
Thus, for $z \in \mathbb{N}_0$ there are $f_{z,i}\in F_k^1\left(\mathbb{R}^{m}\otimes \mathbb R^{n^{\otimes k}}\right)$ and $g_{z,i}  \in F_j^1\left(\mathbb{R}^{n^{\otimes (j+1)}}\right)$ orthonormalized, $\sigma_{z,i} \in [0,1]$, and $N_z \in \mathbb N$ such that 
\begin{equation}
\begin{split}
\label{functionalbound}
&\left\lvert \left\langle  \frac{\Delta\left(h^{(k+1)}_{k,j}(\bullet,2z\alpha)\right)}{\left\lVert \Delta\left(h^{(k+1)}_{k,j}(\bullet,2z\alpha)\right)\right\rVert_{Z}}-\sum_{i=1}^{N_z} \sigma_{z,i} (f_{z,i} \otimes g_{z,i} ), \Delta\left(h^{(k+1)}_{k,j}(\bullet,2z\alpha)\right)\right\rangle_{Z} \right\rvert \\
&=\left\lvert\left\lVert \Delta\left(h^{(k+1)}_{k,j}(\bullet,2z\alpha)\right)\right\rVert_{Z} -\sum_{i=1}^{N_z} \sigma_{z,i} L_{z,\alpha}(f_{z,i},g_{z,i}) \right\rvert <\frac{ \varepsilon}{M}. 
\end{split}
\end{equation}

Then, $s_{z,i}(r,u):= \frac{1}{\sqrt{\alpha}} \indic_{[z\alpha,(z+1)\alpha)}(r)g_{z,i}(u)$ and $t_{z,i}(r,u):= \frac{1}{\sqrt{\alpha}} \indic_{[z\alpha,(z+1)\alpha)}(r) f_{z,i}(u)$ form orthonormal systems parametrized by $z$ and $i$ in spaces $F^n_{j+1}(\mathbb{R}^n)$ and $F_{k+1}^n\left(\mathbb{R}^{m}\right)$ respectively, such that using the auxiliary quantities
\begin{equation*}
\begin{split}
&I:=(z \alpha,(z+1)\alpha)^2 \times (0,\infty)^{k+j}, \ J:=(2z \alpha,2(z+1)\alpha) \times (0,\infty)^{k+j}, \text{ and }\\
&\lambda(v):=\min\left\{v-2z\alpha,2(z+1)\alpha-v\right\}
\end{split}
\end{equation*}
it follows that
\begin{equation} 
\begin{split} 
\label{eq:char}
&\langle t_{z,i}, \Delta( W_k  R_j) s_{z,i} \rangle_{F_{k+1}^n (\mathbb R^m)} \\
&=\frac{1}{\alpha}\int_{I} \left\langle f_{z,i}(s_1,.,s_{k}),\Delta\left(h^{(k+1)}_{k,j}\right)(s,r+t) g_{z,i}(s_{k+1},.,s_{k+j})\right\rangle_{\mathbb{R}^{m}\otimes \mathbb R^{n^{\otimes k}}} \ dr \ dt \ ds \\
&=\frac{1}{2\alpha} \int_{J}  \int_{-\lambda(v)}^{\lambda(v)}  \left\langle f_{z,i}(s_1,.,s_{k}),\Delta\left(h^{(k+1)}_{k,j}\right)(s,v) g_{z,i}(s_{k+1},.,s_{k+j})\right\rangle_{\mathbb{R}^{m}\otimes \mathbb R^{n^{\otimes k}}}  \ dw \ dv \ ds\\
&=\frac{1}{\alpha} \int_{J} \lambda(v) \left\langle f_{z,i}(s_1,.,s_{k}),\Delta\left(h^{(k+1)}_{k,j}\right)(s,v) g_{z,i}(s_{k+1},.,s_{k+j})\right\rangle_{\mathbb{R}^{m}\otimes \mathbb R^{n^{\otimes k}}}  \ dv \ ds
\end{split} 
\end{equation}
where we made the change of variables $v:=r+t$ and $w:=r-t.$ 
For $\alpha$ small enough and $v_1, v_2 \in [0,M+1]$ we have by strong continuity of translations
\begin{equation}
\label{eq: uniformc}
 \left\lVert \Delta\left(h^{(k+1)}_{k,j}(\bullet,v_1)\right)-\Delta\left(h^{(k+1)}_{k,j}(\bullet,v_2)\right) \right\rVert_{Z} < \frac{\varepsilon}{M} \text{ if } \left\lvert v_1-v_2 \right\rvert < 2 \alpha.
\end{equation}
Hence, using the above uniform continuity as well as \eqref{functionalbound} and \eqref{eq:char} 
\begin{equation*} 
\begin{split}
&\left\lvert \sum_{i=1}^{N_z} \sigma_{z,i} \langle   t_{z,i}, \Delta( W_k  R_j) s_{z,i} \rangle_{F_{k+1}^n (\mathbb R^m)}    - \alpha  \left\lVert \Delta\left(h^{(k+1)}_{k,j}(\bullet,2z\alpha)\right)\right\rVert_Z  \right\rvert \\
&\le \frac{1}{\alpha} \int_{2z\alpha}^{2(z+1)\alpha} \lambda(v) \Bigg(\left\lvert \sum_{i=1}^{N_z} \sigma_{z,i} \left\langle  f_{z,i} \otimes g_{z,i}, \Delta\left(h^{(k+1)}_{k,j}(\bullet,v)\right)-\Delta\left(h^{(k+1)}_{k,j}(\bullet,2z\alpha)\right) \right\rangle_Z \right\rvert   \\
&\qquad \qquad \qquad \qquad  +\left\lvert  \sum_{i=1}^{N_z} \sigma_{z,i}  L_{z,\alpha}(f_{z,i},g_{z,i}) -\left\lVert\Delta\left(h^{(k+1)}_{k,j}(\bullet,2z\alpha)\right)\right\rVert_Z \right\rvert \Bigg) \ dv  \lesssim \frac{\alpha \varepsilon}{M}.
\end{split}
\end{equation*}

This implies immediately by uniform continuity \eqref{eq: uniformc}
\[\left\lvert \sum_{i=1}^{N_z} \sigma_{z,i}  \langle t_{z,i}, \Delta( W_k  R_j) s_{z,i} \rangle_{F_{k+1}^n (\mathbb R^m)}  - \frac{1}{2} \int_{2z\alpha}^{2(z+1)\alpha} \left\lVert \Delta\left(h^{(k+1)}_{k,j}(\bullet,v)\right)\right\rVert_Z dv \right\rvert \lesssim \frac{\alpha \varepsilon}{M}.
\]
Summing over $z$ up to $\left\lfloor{\frac{M}{2\alpha}} \right \rfloor $ implies by the choice of $M$ that
\begin{equation*} 
\begin{split}
&\left\lvert \sum_{z=0}^{\left \lfloor{\frac{M}{2\alpha}}\right \rfloor } \sum_{i=1}^{N_z} \sigma_{z,i}  \langle t_{z,i}, \Delta( W_k  R_j) s_{z,i} \rangle_{F_{k+1}^n (\mathbb R^m)} - \frac{1}{2}  \left\lVert \Delta\left(h^{(k+1)}_{k,j}\right) \right\rVert_{L^{1}_{k+1}L^{2}_{k+j}\left(\operatorname{HS}\right)} \right\rvert \lesssim \varepsilon.
\end{split}
\end{equation*}
The Lemma follows then from the characterization of the trace norm stated in \eqref{tracenorm}.
\end{proof}
The preceding Lemma implies bounds on the difference of the dynamics for two systems $\Sigma$ and $\widetilde{\Sigma}$ satisfying Assumption \ref{generalass}. 
Before explaining this in more detail, we recall the notation $\Delta(X):=X-\widetilde{X}$ used in the introduction where $X$ is some observable of system $\Sigma$ and $\widetilde{X}$ its pendant in system $\widetilde{\Sigma}$.

In particular, Lemma \ref{Hankelestimate} immediately gives the statement of Theorem \ref{singestim}.
\begin{proof}[Proof of Theorem \ref{singestim}]
The Hankel operator is an infinite matrix with operator-valued entries $H_{ij}= W_iR_j.$ 
Using the invariance property \eqref{eq:invp}, we can combine Lemma \ref{Hankelestimate} with estimate \eqref{Fourierestimate}, relating the transfer functions to the Volterra kernels, to obtain from the definition of the trace norm \eqref{tracenorm} that
\begin{equation*}
\begin{split}
&\sum_{k=1}^{\infty} \left\lVert \Delta(G_{2k-1}) \right\rVert_{\mathscr H^{\infty}_k \mathscr H^2_{2k-1}} \le 2 \sum_{k=1}^{\infty} \left\lVert \Delta(W_{k}R_{k-1}) \right\rVert_{\operatorname{TC}}\le 2 \left\lVert \Delta(H) \right\rVert_{\operatorname{TC}} \text{ and } \\
&\sum_{k=1}^{\infty} \left\lVert \Delta(G_{2k-2}) \right\rVert_{\mathscr H^{\infty}_k \mathscr H^2_{2k-2}}   \le 2 \sum_{k=0}^{\infty} \left\lVert \Delta(W_{k}R_{k}) \right\rVert_{\operatorname{TC}}\le 2 \left\lVert \Delta(H) \right\rVert_{\operatorname{TC}}
\end{split}
\end{equation*}
which by summing up the two bounds yields the statement of the theorem.
\end{proof}
While Theorem \ref{singestim} controls the transfer functions, the subsequent theorem controls the actual dynamics from zero:
\begin{proof}[Proof of Theorem \ref{dyncorr}]
The operator norm of the control tensor is bounded by
\begin{equation*}
\begin{split}
\left\lVert U_k(s) \right\rVert &\le  \prod_{i=1}^k \left\lVert u(s_i) \right\rVert_{(\mathbb R^n, \left\lVert \bullet \right\rVert_{\infty})}\left\lVert \operatorname{id}_{\mathcal H} \otimes \sum_{i_1,...,i_k=1}^n \langle \widehat{e}_{i_1}\otimes ...\otimes \widehat{e}_{i_k}, \bullet \rangle \right\rVert \\
&\le  \prod_{i=1}^k \left\lVert u(s_i) \right\rVert_{(\mathbb R^n, \left\lVert \bullet \right\rVert_{\infty})}  \left\lVert \sum_{i_1,...,i_k=1}^n\langle \widehat{e}_{i_1}\otimes ...\otimes \widehat{e}_{i_k}, \bullet \rangle \cdot 1 \right\rVert   \le n^{k/2} \prod_{i=1}^k \left\lVert u(s_i) \right\rVert_{(\mathbb R^n, \left\lVert \bullet \right\rVert_{\infty})}
\end{split}
\end{equation*}
where we applied the Cauchy-Schwarz inequality to the product inside the sum to bound the $\ell^1$ norm by an $\ell^2$ norm.

It follows from \eqref{eq:G}, H\"older's inequality, and Minkowski's integral inequality that
\begin{equation*}
\begin{split}
&\left\lVert \Delta(C \varphi(t)) \right\rVert_{\mathbb R^m} \le  \sum_{k=1}^{\infty}\int_{\Delta_k(t)} \Bigg(\left\lVert U_k(s)\right\rVert_{\mathcal L(\mathbb R^m \otimes \mathbb R^{n^{\otimes k}},\mathbb R^m)} \cdot  \\
& \qquad \qquad \qquad \qquad \cdot  \left\lVert \sum_{i=1}^n\Delta \left(O_{k-1}(t-s_1,...,s_{k-1}-s_k)\psi_i \right)\otimes \widehat{e}_i   \right\rVert_{\mathbb R^m \otimes \mathbb R^{n^{\otimes k}}}  \Bigg)\ ds \\
&\le  \sum_{k=1}^{\infty}\int_{\Delta_k(t)}\underbrace{\left\lVert U_k(s)\right\rVert_{\mathcal L(\mathbb R^m \otimes \mathbb R^{n^{\otimes k}},\mathbb R^m)}}_{\le n^{k/2} \prod_{i=1}^k \left\lVert u(s_i) \right\rVert_{(\mathbb R^n, \left\lVert \bullet \right\rVert_{\infty})}} \left\lVert \Delta h_{k-1,0}(t-s_1,..,s_{k-1}-s_k) \right\rVert_{\operatorname{HS}\left(\mathbb R^n,\mathbb R^m \otimes \mathbb R^{n^{\otimes (k-1)}}\right)} \ ds \\
&\le  \sum_{k=1}^{\infty} \left( \left\lVert \Delta(h _{2k-1,0}) \right\rVert_{L^{1}_kL^{2}_{2k-1}(\operatorname{HS})}+  \left\lVert \Delta( h _{2k-2,0}) \right\rVert_{L^{1}_kL^{2}_{2k-2}(\operatorname{HS})} \right) \sqrt{n} \left\lVert u \right\rVert_{L^{\infty}((0,\infty),(\mathbb R^n, \left\lVert \bullet \right\rVert_{\infty}))}. 
\end{split}
\end{equation*}
Then, by \eqref{tracenorm}, Lemma \ref{Hankelestimate}, and the invariance property \eqref{eq:invp}
\begin{equation*}
\begin{split}
\left\lVert \Delta(C \varphi(t)) \right\rVert_{\mathbb R^m}& \le    \sum_{k=1}^{\infty}  \left\lVert \Delta(h _{k-1,k}) \right\rVert_{L^{1}_kL^{2}_{2k-1}(\operatorname{HS})} \sqrt{n} \left\lVert u \right\rVert_{L^{\infty}((0,\infty),(\mathbb R^n, \left\lVert \bullet \right\rVert_{\infty}))} \\
& \quad +\sum_{k=1}^{\infty}  \left\lVert \Delta(h _{k-1,k-1}) \right\rVert_{L^{1}_kL^{2}_{2k-2}(\operatorname{HS})}\sqrt{n} \left\lVert u \right\rVert_{L^{\infty}((0,\infty),(\mathbb R^n, \left\lVert \bullet \right\rVert_{\infty}))} \\
&\le  4  \sqrt{n} \left\lVert \Delta(H) \right\rVert_{\operatorname{TC}} \left\lVert u \right\rVert_{L^{\infty}((0,\infty),(\mathbb R^n, \left\lVert \bullet \right\rVert_{\infty}))}.
\end{split}
\end{equation*}
\end{proof}

\section{Applications}
\label{sec:spdes}
Throughout this section, we assume that we are given a filtered probability space $(\Omega, \mathcal F,(\mathcal F_t)_{t \ge T_0}, \mathbb P)$ satisfying the usual conditions, i.e. the filtration is right-continuous and $\mathcal F_{T_0}$ contains all $\mathcal F$ null-sets. 
We assume $X$ to be a \emph{real} separable Hilbert space. In the following subsection, we study an infinite-dimensional stochastic evolution equation with Wiener noise to motivate the extension of stochastic balanced truncation to infinite-dimensional systems that we introduce thereupon. We stick mostly to the notation introduced in the preceding sections and also consider the \emph{state-to-output (observation) operator} $C \in \mathcal L(X,\mathcal H)$, the \emph{control-to-state (control) operator} $Bu = \sum_{i=1}^n \psi_i u_i,$ and $A$ the generator of an exponentially stable $C_0$-semigroup $(T(t))$ on $X$. 

\subsection{Stochastic evolution equation with Wiener noise.}

Let $Y$ be a separable Hilbert space and $\operatorname{TC}(Y) \ni Q=Q^* \ge 0 $ a positive trace class operator.
We then consider a Wiener process $(W_t)_{t \ge T_0}$ \cite[Def. $2.6$]{GM11} adapted to the filtration $(\mathcal F_t)_{t \ge T_0}$ with covariance operator $Q$.

Furthermore, we introduce the Banach space $\left(\mathcal H_{2}^{(T_0,T)}(X), \sup_{t \in (T_0,T)}\left( \mathbb E \left(   \left\lVert Z_t \right\rVert_{X}\right)^2\right)^{1/2}\right)$ of jointly measurable $((T_0,T)\times \Omega \ni (t,\omega) \mapsto Z_t(\omega))$, $X$-valued processes adapted to the filtration $(\mathcal F_t)_{t \ge T_0}$ and mappings\footnote{We will drop an argument whenever it is convenient and at no risk of confusion. For instance, we will sometimes write $u(t)$ instead of $u(\omega,t)$ or omit the measure and $\sigma$-algebra such that $L^2(\Omega,\mathcal F_0,\mathbb P,X)$ is just denoted as $L^2(\Omega,X)$.}
\begin{equation*}
\begin{split}
& N \in \mathcal L(X, \mathcal L(Y,X)) \text{ and controls } u \in L^2_{\text{ad}}(\Omega_{\mathbb R_{\ge 0}}, \mathbb R^n) \cap L^{\infty}_{\text{ad}}(\Omega_{\mathbb R_{\ge 0}}, \mathbb R^n)
\end{split}
\end{equation*}
 where we recall the notation $\Omega_X:= \Omega \times X.$
 For the stochastic partial differential equation
\begin{equation}
\begin{split}
\label{eq: spde}
dZ_t&= (AZ_t +Bu(t))\ dt + N(Z_t) \ dW_t, \quad t >0  \\ 
 Z_{0}&=\xi \in L^2(\Omega,X)
\end{split}
\end{equation}
there exists by \cite[Theorem $3.5$]{GM11} a unique continuous mild solution in $\mathcal H_{2}^{(T_0,T)}$, satisfying $\mathbb P$-a.s. for $t \in [0,T]$
\begin{equation}
\label{eq:stochmildsol}
Z_t = T(t)\xi + \int_0^t T(t-s) Bu(s) \ ds + \int_0^t T(t-s) N(Z_s) \ dW_s.
\end{equation}

We refer to \eqref{eq: spde} with $B \equiv 0$ as the homogeneous part of that equation.
For solutions $Z_t^{\text{hom}}$ to the homogeneous part of \eqref{eq: spde} starting at $t=0$, let $\Phi(\bullet): L^2(\Omega,\mathcal F_0,X) \rightarrow \mathcal H_{2}^{(0,T)}(X)$ be the flow defined by the mild solution, i.e. $\Phi(t)\xi:=Z_t^{\text{hom}}$. If the initial time is some $T_0$ rather than $0$ we denote the (initial time-dependent) flow by $\Phi(\bullet,T_0):L^2(\Omega,\mathcal F_{T_0},X) \rightarrow \mathcal H_{2}^{(T_0,T)}(X)$. 
The ($X$-)adjoint of the flow is defined by $\langle \Phi(\bullet, T_0)\varphi_1,\varphi_2 \rangle_X = \langle \varphi_1,\Phi(\bullet, T_0)^*\varphi_2 \rangle_X$ for arbitrary $\varphi_1,\varphi_2 \in X.$
\begin{defi}[Exponential stability in m.s.s.]
The solution to the homogeneous system with flow $\Phi$ is called \emph{exponentially stable} in the mean square sense (m.s.s.) if there is some $c>0$ such that for all $\varphi_0 \in X$ and all $t \ge 0$
\begin{equation}
\label{eq: mss}
\mathbb E \left( \left\lVert \Phi(t) \varphi_0 \right\rVert^2_X \right) \lesssim e^{-c t}  \left\lVert \varphi_0 \right\rVert_X^2.
\end{equation}
\end{defi}

Lyapunov techniques to verify exponential stability for SPDEs of the form \eqref{eq: spde} are discussed in \cite[Section $6.2$]{GM11}.

We then define the variation of constants process $Y$ of the flow $\Phi$ as
\begin{equation}
\label{eq:DuHamTra}
Y_t(u) :=\int_0^t \Phi(t,s) Bu(s)\ ds= \sum_{i=1}^n\int_0^t  \Phi(t,s)\psi_i u_i(s) \ ds.
\end{equation}

This variation of constants process coincides with the mild solution to the full SPDE \eqref{eq: spde} almost surely for initial-conditions $\xi=0$. This follows from \eqref{eq:stochmildsol} and the stochastic Fubini theorem \cite[Theorem $2.8$]{GM11}, since the absolute integral exists by \eqref{eq: mss} and exponential stability of the semigroup, 
\begin{equation*}
\begin{split}
& \int_0^t T(t-s)  N (Y_s) \ dW_s = \int_0^t T(t-s) N\left(\int_0^s \Phi(s,r)B u(r) \ dr\right) \ dW_s  \\
&= \int_0^t T(t-s) N\left(\int_0^t \underbrace{\indic_{[0,s]}(r)}_{=\indic_{[r,t]}(s)}  \Phi(s,r)B u(r) \ dr\right) \ dW_s  \\
&= \int_0^t \left( \int_0^t T(t-s) N\left( \indic_{[r,t]}(s)  \Phi(s,r)B u(r)\right) \ dW_s\right) \ dr  \\
&= \int_0^t  \left(\int_r^t T(t-s)N\left(\Phi(s,r)B u(r)\right) \ dW_s \right)\ dr\\
&= \int_0^t \Phi(t,r)B u(r)- T(t-r)B u(r) \ dr = Y_t - \int_0^t T(t-r) Bu(r) \ dr
\end{split}
\end{equation*}
which can be rewritten as $Y_t =   \int_0^t T(t-s)N(Y_s) \ dW_s + \int_0^t T(t-r) Bu(r) \ dr.$

Another important property of the homogeneous solution to \eqref{eq: spde} is that it satisfies the \emph{homogeneous Markov property} \cite[Section $3.4$]{GM11}.
While the flow $\Phi$ is time-dependent as the SPDE is non-autononomous, there is an associated $ C_b$-Markov semigroup $P(t):  C_b(X) \rightarrow  C_b(X)$ satisfying $P(t)f(x)=\mathbb E(f(\Phi(s+t,s)x))$ independent of $s \ge 0$ and $P(t+s)f=P(t)P(s)f.$

The $ C_b$-Feller property, i.e.~$P(t)$ maps $ C_b(X)$ again into $ C_b(X),$ will not be needed in our subsequent analysis, but reflects the continuous dependence of the solution \eqref{eq: spde} on initial data.

In particular, we use that the $ C_b$-Markov semigroup can be extended to all $f$ for which the process is still integrable, i.e. $f(\Phi(t,s)x) \in L^1(\Omega,\mathbb R) $ for arbitrary $s \le t$ and $x \in X.$ 

By applying the Markov property to the auxiliary functions $f_{x,y}$ 
\begin{equation*}
\begin{split}
&\left\langle \Phi(T-t+s,s)^*x,BB^*\Phi(T-t+s,s)^*y \right\rangle_{\mathbb R^n}\\
&= \sum_{i=1}^n \underbrace{\left\langle  \Phi(T-t+s,s)\psi_i,y\right\rangle \left\langle x, \Phi(T-t+s,s)\psi_i\right\rangle}_{=:f_{x,y}(\Phi(T-t+s,s)\psi_i)}
 \end{split}
 \end{equation*}
with $0 \le t \le T$, $x,y \in X$, and $0 \le s \le T-t$ it follows by evaluating $\mathbb E(f_{x,y}(\Phi(T-t+s,s)\psi_i))$ at $s=0$ and $s=t$ that
\begin{equation}
\label{eq:markov}
 \mathbb E\left\langle \Phi(T-t,0)^*y,BB^*\Phi(T-t,0)^*x \right\rangle_{\mathbb R^n}=\mathbb E\left\langle \Phi(T,t)^*y,BB^*\Phi(T,t)^*x \right\rangle_{\mathbb R^n}.
 \end{equation}

In the following subsection we introduce a \emph{generalized stochastic balanced truncation framework} for systems with properties similar to the ones that we just discussed for the particular stochastic evolution equation \eqref{eq: spde}.

\subsection{Generalized stochastic balanced truncation}
For an exponentially stable flow $\Phi$ we define the stochastic observability map $W$ and reachability map $R$
\begin{equation}
\begin{split}
\label{eq:stoobreach}
& W \in \mathcal L (X, L^2(\Omega_{(0,\infty)},\mathcal H))\text{ with }(Wx)(t,\omega):= C \Phi(t,\omega) x  \text{ and }\\
& R \in \operatorname{HS}(L^2(\Omega_{(0,\infty)},\mathbb R^n), X)\text{ with } Rf:=\mathbb E \left(\int_{(0,\infty)}     \sum_{i=1}^n\Phi(s)\psi_i \langle f(s), \widehat{e}_i \rangle  \ ds \right).
\end{split}
\end{equation}
\begin{rem}
\label{rem:TC}
Let $\mathcal H \simeq \mathbb R^m,$ then each map $x \mapsto \langle \widehat{e}_i,Wx \rangle$ is a Carleman operator and by the characterization of Carleman operators of Hilbert-Schmidt type \cite[Theorem $6.4$ (iii)]{Wei} the operator $W$ is a Hilbert-Schmidt operator as well.
\end{rem}
Using the observability and reachability maps \eqref{eq:stoobreach}, we define stochastic \emph{observability} $\mathscr{O}=W^*W \in \mathcal L(X)$ and \emph{reachability} $\mathscr{P}=RR^* \in \operatorname{TC}(X)$ gramians satisfying for all $x,y \in X$
\begin{equation}
\begin{split}
\label{eq:gram}
\langle x, \mathscr{O}y \rangle &=\mathbb{E} \left( \int_0^{\infty} \langle C \Phi(t)x, C \Phi(t) y \rangle \ dt \right) \\ 
\langle x, \mathscr{P}y \rangle &=\mathbb{E} \left( \int_0^{\infty}  \langle B^* \Phi(t)^* x, B^* \Phi(t)^* y \rangle    \ dt \right).
\end{split}
\end{equation}

To obtain a dynamical interpretation of the gramians, let us recall that for compact self-adjoint operators $K: X\rightarrow X$, we can define the (possibly unbounded) Moore-Penrose pseudoinverse as
\begin{equation*}
\begin{split}
K^{\#}&:\operatorname{ran}(K) \oplus \operatorname{ran}( K)^{\perp} \subset X \rightarrow X \text{ such that }K^{\#}x := \sum_{\lambda \in \sigma(K)\backslash\{0 \}} \lambda^{-1} \langle x,v_{\lambda} \rangle v_{\lambda} 
\end{split}
\end{equation*}
using any orthonormal eigenbasis $(v_{\lambda})_{\lambda \in \sigma(K)}$ associated with eigenvalues $\lambda$ of $K$.

Then, for any time $\tau>0$ one defines the input energy $E^{\tau}_{\text{input}}: X \rightarrow [0,\infty]$ and output energy $E^{\tau}_{\text{output}}: X \rightarrow [0,\infty]$ up to time $\tau$ as 
\begin{equation}
\begin{split}
\label{eq:energy}
E^{\tau}_{\text{input}}(x)&:= \inf_{u \in L^2((0,\infty),\mathbb R^n); \mathbb E(Y_{\tau}(u))=x} \int_0^{{\tau}} \left\lVert u(t) \right\rVert^2  \ dt \text{ and }\\
E^{\tau}_{\text{output}}(x)&:= \left\lVert C\Phi x \right\rVert^2_{L^2(\Omega_{(0,\tau)},\mathcal H)}
\end{split}
\end{equation}
where $Y_t$ is the variation of constants process of the flow defined in \eqref{eq:DuHamTra}. In particular, the expectation value $ \mathbb E(Y_{\tau}(u))$ appearing in the definition of the input energy is a solution to the deterministic equation
\begin{equation}
\label{eq:classicaleq}
\varphi'(t) =  T(t) \varphi(t) + Bu(t), \quad \varphi(0) = 0
\end{equation}
where $u \in L^2((0,\infty),\mathbb R^n)$ is a deterministic control.
The theory of linear systems implies that $x$ is then reachable, by the dynamics of \eqref{eq:classicaleq}, after a fixed finite time ${\tau}>0$ if $x \in \operatorname{ran} \mathscr P_{\tau}^{\text{det}}$ where $ \mathscr P_{\tau}^{\text{det}}$ is the time-truncated deterministic linear gramian which for $x,y \in X$ is defined as
\[ \langle x,\mathscr P_{\tau}^{\text{det}}y \rangle := \int_0^{\tau} \langle B^*T(s)^*x,B^* T(s)^*y \rangle \ ds.\]
The control, of minimal $L^2$ norm, that steers the deterministic system \eqref{eq:classicaleq} into state $x$ after time $\tau$ is then given by $u(t)=\indic_{[0,\tau]}(t) B^*T(\tau-t)^*\left( \mathscr P_{\tau}^{\text{det}}\right)^{\#}x.$  
We also define time-truncated stochastic reachability and observability gramians $\mathscr P_{\tau}$ and $\mathscr O_{\tau}$ for $x,y \in X$
\begin{equation*}
\begin{split}
\langle x, \mathscr{P}_{\tau}y \rangle &=\mathbb{E} \left( \int_0^{\tau}  \langle B^*\Phi(t)^* x, B^* \Phi(t)^* y  \rangle    \ dt \right) \text{ and } \\
\langle x, \mathscr{O}_{\tau}y \rangle &=\mathbb{E} \left( \int_0^{\tau}  \langle C\Phi(t)x, C \Phi(t) y  \rangle    \ dt \right).
\end{split}
\end{equation*}
An application of the Cauchy-Schwarz inequality shows that $\operatorname{ker}(\mathscr{P}_{\tau}) \subset \operatorname{ker}(\mathscr{P}_{\tau}^{\text{det}})$ and thus $\overline{\operatorname{ran}}(\mathscr{P}_{\tau}^{\text{det}}) \subset \overline{\operatorname{ran}}(\mathscr{P}_{\tau}):$
\begin{equation*}
\begin{split}
\langle x, \mathscr{P}_{\tau}^{\text{det}} x \rangle &=\int_0^{\tau}  \left\lVert  B^*T(t)^* x \right\rVert^2    \ dt=\int_0^{\tau}  \left\lVert  \mathbb E ( B^*\Phi(t,0)^* x )\right\rVert^2    \ dt \\
&\le \mathbb E \int_0^{\tau}  \left\lVert   B^*\Phi(t,0)^* x \right\rVert^2    \ dt  = \langle x, \mathscr{P}_{\tau} x \rangle.
\end{split}
\end{equation*}
Since for $\tau_1>\tau_2: \operatorname{ker}(\mathscr{P}_{\tau_1})\subset  \operatorname{ker}(\mathscr{P}_{\tau_2})$ it also follows that $ \overline{\operatorname{ran}}(\mathscr{P}_{\tau_2}) \subset  \overline{\operatorname{ran}}(\mathscr{P}_{\tau_1}).$
Then, one has, as for finite-dimensional systems \cite[Prop. 3.10]{BR15}, the following bound on the input energy \eqref{eq:energy}:
\begin{lemm}
Let $x$ be a reachable by the flow defined in \eqref{eq:classicaleq} and $x \in \operatorname{ran}(\mathscr P_{\tau})$ then \[E^{\tau}_{\text{input}}(x)=\left\langle x,\left(\mathscr P_{\tau}^{\operatorname{det}}\right)^{\#}x \right\rangle  \ge \langle x, \mathscr P_{\tau}^{\#} x \rangle.\]
The output energy of any state $x \in X$ satisfies 
\[E^{\tau}_{\text{output}}(x)=\left\langle x, \mathscr O_{\tau} x \right\rangle\le \left\langle x, \mathscr O x \right\rangle. \]
\end{lemm}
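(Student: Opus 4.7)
The plan is to treat the two assertions separately. The output-energy part is essentially an unfolding of definitions, while the input-energy identity reduces to the classical minimum-norm control problem for the deterministic equation \eqref{eq:classicaleq}, and the input-energy inequality then follows from operator monotonicity of the Moore-Penrose pseudoinverse.

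First I would establish the output-energy claim. By Fubini-Tonelli applied to the non-negative integrand,
\begin{equation*}
E^{\tau}_{\operatorname{output}}(x) = \left\lVert C\Phi x \right\rVert^{2}_{L^{2}(\Omega_{(0,\tau)},\mathcal H)} = \mathbb E \int_{0}^{\tau}\left\lVert C\Phi(t)x\right\rVert^{2}_{\mathcal H}\,dt = \langle x, \mathscr O_{\tau}x\rangle,
\end{equation*}
which is immediate from the definition \eqref{eq:gram} of $\mathscr O_{\tau}$. Extending the integration domain from $(0,\tau)$ to $(0,\infty)$ only augments the non-negative integrand, giving the inequality $\langle x, \mathscr O_{\tau}x\rangle \le \langle x, \mathscr O x\rangle$.

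Next, for the input-energy identity, I would introduce the deterministic control-to-state map at time $\tau$, namely $L_\tau : L^2((0,\tau),\mathbb R^n) \to X$, $L_\tau u := \int_0^{\tau} T(\tau-s)Bu(s)\,ds$, so that by construction $L_\tau L_\tau^{*} = \mathscr P_\tau^{\operatorname{det}}$. A stochastic Fubini computation analogous to the one carried out in the preceding text — using that stochastic $W$-integrals have zero mean — yields $\mathbb E\, Y_\tau(u) = L_\tau u$ for deterministic $u$, so the constraint set in the infimum in \eqref{eq:energy} coincides with $\{u \in L^2((0,\tau),\mathbb R^n): L_\tau u = x\}$. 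The hypothesis that $x$ is reachable by the deterministic flow means precisely that $x \in \operatorname{ran}(L_\tau)$, and standard Moore-Penrose pseudoinverse theory for bounded operators between Hilbert spaces identifies the minimum-$L^2$-norm element of this affine subspace as $u^{*}(t) = B^{*}T(\tau-t)^{*}(\mathscr P_\tau^{\operatorname{det}})^{\#}x$, the candidate already written down in the excerpt, whose squared norm equals
\begin{equation*}
\langle L_\tau^{*}(\mathscr P_\tau^{\operatorname{det}})^{\#}x, L_\tau^{*}(\mathscr P_\tau^{\operatorname{det}})^{\#}x \rangle = \langle (\mathscr P_\tau^{\operatorname{det}})^{\#}x, \mathscr P_\tau^{\operatorname{det}}(\mathscr P_\tau^{\operatorname{det}})^{\#}x \rangle = \langle x, (\mathscr P_\tau^{\operatorname{det}})^{\#}x\rangle,
\end{equation*}
where the last equality uses $\mathscr P_\tau^{\operatorname{det}}(\mathscr P_\tau^{\operatorname{det}})^{\#}$ equals the orthogonal projection onto $\overline{\operatorname{ran}}(\mathscr P_\tau^{\operatorname{det}})$, which fixes $x$.

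For the remaining inequality $\langle x,(\mathscr P_\tau^{\operatorname{det}})^{\#}x\rangle \ge \langle x,\mathscr P_\tau^{\#}x\rangle$, the cleanest route avoids direct manipulation of the pseudoinverses and uses the variational characterization
\begin{equation*}
\langle x, K^{\#}x\rangle = \sup_{y \in X}\bigl(2\langle x, y\rangle - \langle y, Ky\rangle\bigr),
\end{equation*}
valid for any positive self-adjoint compact $K$ and $x \in \overline{\operatorname{ran}}(K)$; the supremum is attained at $y = K^{\#}x$ whenever the latter is defined. Since the excerpt has already established $\mathscr P_\tau^{\operatorname{det}} \le \mathscr P_\tau$ via a Cauchy-Schwarz/Jensen estimate, the functional inside the supremum is pointwise larger for $K=\mathscr P_\tau^{\operatorname{det}}$ than for $K=\mathscr P_\tau$, and taking suprema preserves the ordering. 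The main obstacle is justifying these pseudoinverse manipulations in infinite dimensions, where $K^{\#}$ can be unbounded; the variational characterization is what renders the argument clean, as it only involves the bounded operators $\mathscr P_\tau$ and $\mathscr P_\tau^{\operatorname{det}}$, while the established range inclusion $\overline{\operatorname{ran}}(\mathscr P_\tau^{\operatorname{det}}) \subset \overline{\operatorname{ran}}(\mathscr P_\tau)$ together with the assumption $x \in \operatorname{ran}(\mathscr P_\tau^{\operatorname{det}}) \cap \operatorname{ran}(\mathscr P_\tau)$ ensures both quantities are finite.
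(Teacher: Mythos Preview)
Your proof is correct. The output-energy statement and the input-energy identity $E^\tau_{\text{input}}(x)=\langle x,(\mathscr P_\tau^{\operatorname{det}})^{\#}x\rangle$ are handled essentially as in the paper (indeed, your treatment of the identity is more explicit than the paper's, which simply cites the standard minimum-norm control formula). For the inequality $\langle x,(\mathscr P_\tau^{\operatorname{det}})^{\#}x\rangle\ge\langle x,\mathscr P_\tau^{\#}x\rangle$, however, you take a genuinely different route. The paper introduces the stochastic quantity $v(t)=\indic_{[0,\tau]}(t)\,B^*\Phi(\tau,t)^*\mathscr P_\tau^{\#}x$ alongside the deterministic optimal control $u$, uses the Markov identity \eqref{eq:markov} together with $\mathbb E\,\Phi(\tau,s)^*=T(\tau-s)^*$ to verify the orthogonality relation $\mathbb E\int_0^\tau\langle v,u-v\rangle\,ds=0$, and concludes by the Pythagorean identity $\mathbb E\|u\|^2=\mathbb E\|v\|^2+\mathbb E\|u-v\|^2$. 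Your argument instead invokes the variational formula $\langle x,K^{\#}x\rangle=\sup_y\bigl(2\langle x,y\rangle-\langle y,Ky\rangle\bigr)$ and the already established ordering $\mathscr P_\tau^{\operatorname{det}}\le\mathscr P_\tau$. This is more abstract and works for any ordered pair of positive compact operators with $x$ in both ranges, never touching the stochastic flow for this step; the paper's approach is more concrete, exhibits the gap explicitly as $\mathbb E\|u-v\|^2$, and ties the inequality directly to the dynamics through the Markov property.
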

\begin{proof}
The representation of the output energy is immediate from the definition of the (time-truncated) observability gramian. 
For the representation of the input energy we have by assumption $x \in \operatorname{ran}(\mathscr P_{\tau}^{\text{det}}) \cap \operatorname{ran}(\mathscr P_{\tau})$. Consider then functions 
\begin{equation*}
\begin{split}
u(t)&:=\indic_{[0,\tau]}(t) B^*T(\tau-t)^* \left( \mathscr P_{\tau}^{\text{det}}\right)^{\#}x \text{ and }
v(t):=\indic_{[0,\tau]}(t) B^*\Phi(\tau,t)^*\mathscr P_{\tau}^{\#}x.
\end{split}
\end{equation*}
Hence, we find since $x= \mathscr P_{\tau}^{\text{det}}\left( \mathscr P_{\tau}^{\text{det}}\right)^{\#}x=\mathscr P_{\tau}\mathscr P_{\tau}^{\#} x$ that
\begin{equation*}
\begin{split}
\mathbb E\int_0^{\tau} \left\langle v(s),u(s)-v(s) \right\rangle_{\mathbb R^n}\ ds=0
\end{split}
\end{equation*}
which implies the claim on the (time-truncated) reachability gramian
\begin{equation*}
\begin{split}
\left\langle x,\left(P_{\tau}^{\operatorname{det}}\right)^{\#}x \right\rangle
&=\mathbb E \int_0^{\tau} \left\lVert u(s)  \right\rVert_{\mathbb R^n}^2  \ ds\\
&=  \mathbb E \int_0^{\tau} \left\lVert v(s)  \right\rVert_{\mathbb R^n}^2  \ ds + \mathbb E \int_0^{\tau} \left\lVert u(s)-v(s)  \right\rVert_{\mathbb R^n}^2  \ ds \\
&\ge \left\langle x,P_{\tau}^{\#}x \right\rangle.
\end{split}
\end{equation*}

\end{proof}

\begin{defi}
The stochastic Hankel operator is defined as
\begin{equation}
\begin{split}
\label{eq: Hankelsto}
&H  \in \operatorname{HS}\left(L^2(\Omega_{(0,\infty)},\mathbb R^n),L^2(\Omega_{(0,\infty)},\mathcal H)\right) \text{ such that }(Hf)(t,\omega) = (WRf)(t,\omega).
\end{split}
\end{equation}
By Remark \ref{rem:TC} the Hankel operator is trace class if $\mathcal H\simeq \mathbb R^m$ for some $m \in \mathbb N.$
\end{defi}

From standard properties of the stochastic integral it follows that the expectation value of the solution $\mathbb E(Z_t)$ or $\mathbb E(CZ_t)$ to \eqref{eq: spde2} is just the solution $\varphi$ or $C\varphi$ to the linear and deterministic equation $\varphi'(t)=A\varphi(t)+B\mathbb Eu(t).$ We can then show Proposition \ref{theo2} which extends this analogy between stochastic and linear systems to the error bounds for deterministic controls:

\begin{proof}[Proposition \ref{theo2}]
Let $(e_n)$ and $(f_n)$ be orthonormal systems in $L^2((0,\infty), \mathbb{R}^n)$ and $L^2((0,\infty),\mathbb{R}^m)$, then they are also orthonormal in $L^2(\Omega_{(0,\infty)},\mathbb R^n)$ and $L^2(\Omega_{(0,\infty)},\mathbb R^m)$.  

Let $q_{k}(x):=\left\langle \widehat{e}_k,Cx  \right\rangle_{\mathbb R^m}$ and $g(\sigma):=\Delta\left(\mathbb E \left(C\Phi(\sigma)B \right) \right)\in \mathbb{R}^{m \times n}.$
From the definition of the trace norm \eqref{tracenorm} and the semigroup property it follows that 
\begin{equation*}
\begin{split}
&\left\lVert \Delta(H) \right\rVert_{\operatorname{TC}} \ge \sum_{i \in \mathbb{N}}  \left\lvert \langle f_i,\Delta(H) e_i \rangle \right\rvert\\
&= \sum_{i \in \mathbb{N}} \left\lvert \int_{(0,\infty)^2} \sum_{k=1}^m\Delta\left( \int_{\Omega^2} \left\langle f_i(s),\widehat{e}_k \right\rangle  \left\langle \widehat{e}_k, C\Phi(s,\omega')\Phi(t,\omega)B  \  e_i(t)  \right\rangle  \ d\mathbb P(\omega') \ d\mathbb P(\omega) \right) \ ds \ dt \right\rvert\\
&=\sum_{i \in \mathbb{N}} \left\lvert \int_{(0,\infty)^2} \sum_{j=1}^n  \sum_{k=1}^m  \left\langle f_i(s),\widehat{e}_k \right\rangle \Delta \left( \mathbb E((P(s)q_k)(\Phi(t)\psi_j))        \right) \langle \widehat{e}_j,e_i(t)\rangle  \ ds \ dt \right\rvert.
\end{split}
\end{equation*}

Then by the semigroup property of the time-homogeneous Markov process it follows that 
 \[\mathbb E((P(s)q_k)(\Phi(t)\psi_j))=(P(t)P(s)q_k)(\psi_j)=(P(t+s)q_k)(\psi_j)\]
 and thus
\begin{equation*}
\begin{split}
&\left\lVert \Delta(H) \right\rVert_{\operatorname{TC}} \ge \sum_{i \in \mathbb{N}} \left\lvert \int_{(0,\infty)^2} \sum_{j=1}^n\left\langle f_i(s),\Delta( C P(s+t)\psi_j) \langle e_i(t), \widehat{e}_j\rangle       \right\rangle_{\mathbb R^m}  \ ds \ dt \right\rvert\\
&= \sum_{i \in \mathbb{N}} \left\lvert \int_{(0,\infty)^2} \left\langle f_i(s),\Delta\left( \mathbb E\left(C \Phi(s+t) B \right)   \right) e_i(t)  \right\rangle_{\mathbb R^m} \ ds \ dt \right\rvert\\
&= \sum_{i \in \mathbb{N}} \left\lvert \int_{(0,\infty)^2} \left\langle f_i(s),g(s+t) \ e_i(t)  \right\rangle_{\mathbb R^m} \ ds \ dt \right\rvert.
\end{split}
\end{equation*}
The standard estimate for linear systems \cite[Theorem $2.1$]{CGP} implies then
\[\left\lVert \Delta(H) \right\rVert_{\operatorname{TC}}\ge \frac{1}{2} \left\lVert g \right\rVert_{L^1((0, \infty),\mathcal L(\mathbb{R}^n, \mathbb{R}^m))}.\]
Using this inequality the statement of the theorem follows from the homogeneity of the Markov semigroup and Young's inequality
\begin{equation*}
\begin{split}
& \left\lVert \mathbb E  \Delta(CY_{\bullet}(u)) \right\rVert_{L^p((0,\infty),\mathbb R^m)} \le   \left\lVert \int_{(0,\bullet)}  \left\lVert \Delta \left(  \mathbb E(C \Phi(\bullet-s) B)\right)  u(s) \right\rVert_{\mathbb R^m} \ ds  \right\rVert_{L^p((0,\infty),\mathbb R)} \\
 &\le \left\lVert \int_{(0,\infty)}  \left\lVert \indic_{(0,\infty)}(\bullet-s)\Delta \left(  \mathbb E(C \Phi(\bullet-s) B)\right)\right\rVert_{\mathcal L(\mathbb{R}^n,\mathbb{R}^m)}    \left\lVert \indic_{(0,\infty)}(s)  u(s) \right\rVert_{\mathbb R^m} \ ds  \right\rVert_{L^p((0,\infty),\mathbb R)} \\
 &\le    \left\lVert   \left\lVert \indic_{(0,\infty)} \Delta \left(  \mathbb E(C \Phi B)\right) \right\rVert_{\mathcal L(\mathbb{R}^n,\mathbb{R}^m)} *   \left\lVert \indic_{(0,\infty)} u \right\rVert_{\mathbb R^n} \right\rVert_{L^p((0,\infty),\mathbb R)}  \\
&\le   \left\lVert  g  \right\rVert_{L^1((0,\infty), \mathcal L(\mathbb{R}^n,\mathbb{R}^m))} \left\lVert u \right\rVert_{L^{p}((0,\infty), \mathbb{R}^n)}   \le  2\left\lVert \Delta(H) \right\rVert_{\operatorname{TC}}\left\lVert u \right\rVert_{L^{p}( (0,\infty), \mathbb{R}^n)}.
\end{split}
\end{equation*}
\end{proof}
\begin{defi}
The Volterra kernel of the stochastic Hankel operator is defined as 
\[ h((s,\omega),(t,\omega')):= C\Phi(s,\omega)\Phi(t,\omega')B\]
and the \emph{compressed Volterra kernel} reads $\overline{h}(s,\omega):= C\Phi(s,\omega)B.$
\end{defi}
While the error bound in Proposition \ref{theo2} relied essentially on linear theory, our next estimate stated in Theorem \ref{theo3} bounds the expected error. The proof strategy resembles the proof given for bilinear systems in Lemma \ref{Hankelestimate}. We commence, as we did for bilinear systems, by introducing the Volterra kernels of the stochastic Hankel operator.

\begin{proof}[Proof of Theorem \ref{theo3}]
We will show that the difference of compressed Volterra kernels $\overline h$ for both systems satisfies
\begin{equation}
\label{eq:tbound}
 \int_{0}^{\infty} \left\lVert  \Delta( \overline{h}(v,\bullet)) \right\rVert_{L^2(\Omega, \operatorname{HS}(\mathbb R^n, \mathbb R^m))} \ dv \le 2 \left\lVert \Delta(  H) \right\rVert_{\operatorname{TC}\left(L^2(\Omega_{(0,\infty)},\mathbb R^n),L^2(\Omega_{(0,\infty)},\mathbb R^m)\right)}.
\end{equation}
We start by showing how \eqref{eq:tbound} implies \eqref{eq:estimate}
\begin{equation*}
\begin{split}
\sup_{t \in (0,\infty)} \mathbb E  \left\lVert  \Delta(CY_t(u)) \right\rVert_{\mathbb R^m} &\le  \sup_{t \in (0,\infty)}  \int_{(0,t)}   \mathbb E \left\lVert \Delta \left(  C \Phi(t,s) B\right)  u(s) \right\rVert_{\mathbb R^m} \ ds   \\
&\le  \sup_{t \in (0,\infty)}  \int_{(0,t)}   \left(\mathbb E \left\lVert \Delta \left(  C \Phi(t,s) B\right)\right\rVert_{\mathcal L(\mathbb R^n, \mathbb R^m)}^2\right)^{1/2} \left(\mathbb E \left\lVert u(s) \right\rVert_{\mathbb R^n}^2\right)^{1/2} \ ds   \\
&\le   \int_{(0,\infty)}   \left(\mathbb E \left\lVert \Delta \left(  C \Phi(t) B\right)\right\rVert_{\mathcal L(\mathbb R^n, \mathbb R^m)}^2\right)^{1/2} \ dt \left\lVert  u \right\rVert_{\mathcal H_2^{(0,\infty)}(\mathbb R^n)}    \\
&  \le  2\left\lVert \Delta(H) \right\rVert_{\operatorname{TC}}\left\lVert u \right\rVert_{\mathcal H_2^{(0,\infty)}(\mathbb R^n)}.
\end{split}
\end{equation*}
Thus, it suffices to verify \eqref{eq:tbound}. Let $Z:= L^2\left(\Omega,\mathbb{R}^{m}\right)\otimes L^2 \left(\Omega,\mathbb{R}^{n}\right)$. The independence assumption in the theorem has been introduced for
\[\left\lVert \Delta(h((s,\bullet),(t,\bullet')))\right\rVert_{Z} = \left\lVert \Delta( \overline{h}(s+t,\bullet))\right\rVert_{L^2(\Omega, \operatorname{HS}(\mathbb R^n, \mathbb R^m))}\,\]
to hold. 
To see this, we consider an auxiliary function $\xi_{i}(x_1,x_2):=\big( \langle \widehat{e}_i,Cx_1-\widetilde Cx_2 \rangle_{\mathbb R^m} \big)^2$ where $C$ and $\widetilde C$ are the observation operators of the two systems.
By the independence assumption, there is again a Markov semigroup $(P(t))_{t \ge 0}$ associated with the time-homogeneous Markov process determined by the vector-valued flow $(\Phi(t),\widetilde{\Phi}(t))_{t \ge 0}$ such that $(P(t)\xi_i)(x_1,x_2):=\mathbb E(\xi_i(\Phi(s+t,s)x_1, \widetilde{\Phi}(s+t,s)x_2)).$  Let $(\psi_j)_{j \in \left\{1,..,n \right\}}, (\widetilde{\psi})_{j \in \left\{1,..,n \right\}}$ be the vectors in $X$ comprising the control operators $B$ and $\widetilde B$, respectively. The semigroup property of $(P(t))_{t \ge 0}$ implies then
\begin{equation} 
\begin{split}
\label{eq:condensation}
&\left\lVert \Delta(h((s,\bullet),(t,\bullet')))\right\rVert^2_{Z} \\
&= \sum_{i=1}^m \sum_{j=1}^n\int_{\Omega \times \Omega} \xi_i \left(\Phi(s,\omega)\Phi(t,\omega')\psi_j,\widetilde{\Phi(s,\omega)} \widetilde{\Phi(t,\omega')}\widetilde{\psi_j} \right) \ d \mathbb P(\omega) \ d \mathbb P(\omega')\\ 
&= \sum_{i=1}^m \sum_{j=1}^n\mathbb E\left(P(s) \xi_i\left(\Phi(t)\psi_j,\widetilde{\Phi(t)}\widetilde{\psi_j}\right)\right)\\ 
&= \sum_{i=1}^m \sum_{j=1}^n (P(t)P(s)\xi_{i})(\psi_j,\widetilde{\psi}_j) =  \sum_{i=1}^m \sum_{j=1}^n (P(s+t)\xi_{i})(\psi_j,\widetilde{\psi}_j) \\
&=  \left\lVert \Delta(\overline{h}(s+t,\bullet))\right\rVert_{L^2(\Omega, \operatorname{HS}(\mathbb R^n, \mathbb R^m))}^2.
\end{split}
\end{equation}
Let $M$ be large enough such that $\frac{1}{2} \int_{(2M,\infty)}  \left\lVert  \Delta( \overline{h}(v,\bullet)) \right\rVert_{L^2(\Omega, \operatorname{HS}(\mathbb R^n, \mathbb R^m))} \ dv  \le \varepsilon.$ Then, consider the integral function defined for $0<\alpha/2<x$
\[G(x,\alpha):=\frac{1}{\alpha} \int_{x-\alpha/2}^{x+\alpha/2} \Delta( \overline{h}(2s,\bullet)) \ ds.\]
By Lebesgue's differentiation theorem for Bochner integrals this function converges for $x \in (0,M)$ pointwise on a set $I \subset (0,M)$ of full measure to its integrand evaluated at $s=x$ as $\alpha \downarrow 0.$ 
In particular, for any $x \in I$ there is $\delta_x<\text{min}(x,M-x)$ such that if $0 <  \alpha /2 \le \delta_x$ then
\begin{equation}
\begin{split}
\label{eq: uniformly}
&\left\lvert \frac{1}{\alpha} \int_{x-\alpha/2}^{x+\alpha/2} \left\lVert  \Delta( \overline{h}(2s,\bullet) )\right\rVert_{L^2(\Omega, \operatorname{HS}(\mathbb R^n, \mathbb R^m))} \ ds- \left\lVert  \Delta( \overline{h}(2x,\bullet) )\right\rVert_{L^2(\Omega, \operatorname{HS}(\mathbb R^n, \mathbb R^m))} \right\rvert \\
&  \le \frac{1}{\alpha} \int_{x-\alpha/2}^{x+\alpha/2} \left\lVert  \Delta( \overline{h}(2s,\bullet) - \overline{h}(2x,\bullet) )\right\rVert_{L^2(\Omega, \operatorname{HS}(\mathbb R^n, \mathbb R^m))} \ ds \le \varepsilon/M. 
\end{split}
\end{equation}
Since $\Delta(h((s,\bullet),(t,\bullet')))$ contains the products of two flows, the function $\Delta(h((x,\bullet),(x,\bullet')))$ is a.e. well-defined on the diagonal. 
Then, there is a set $J$ of full measure such that every $x \in J \subset (0,M)$ is a Lebesgue point of the Volterra kernel on the diagonal. Thus, as for the condensed Volterra kernel above, there is also for the full Volterra kernel some $0<\gamma_x<\text{min}(x,M-x)$ such that if $0 < \alpha/2 \le \gamma_x$ then
\begin{equation}
\label{eq:epsM}
\frac{1}{\alpha^2} \int_{x-\alpha/2}^{x+\alpha/2}  \int_{x-\alpha/2}^{x+\alpha/2}  \left\lVert \Delta(h((s,\bullet),(t,\bullet')))- \Delta(h((x,\bullet),(x,\bullet')))\right\rVert_Z  \ ds \ dt \le \varepsilon/M.
\end{equation}

This is due to Lebesgue's differentiation theorem for Banach space-valued integrands applied to the flows $\Phi,\widetilde{\Phi}$ and the following estimate
\begin{equation*}
\begin{split}
&\frac{1}{\alpha^2} \int_{x-\alpha/2}^{x+\alpha/2}  \int_{x-\alpha/2}^{x+\alpha/2}  \left\lVert \Delta\left(h((s,\bullet),(t,\bullet'))\right)- \Delta\left(h((x,\bullet),(x,\bullet'))\right)\right\rVert_Z \ ds \ dt \\
&\le \frac{1}{\alpha^2} \int_{x-\alpha/2}^{x+\alpha/2}  \int_{x-\alpha/2}^{x+\alpha/2} \left\lVert \Delta\left(h((s,\bullet),(t,\bullet'))\right)- \Delta\left(h((s,\bullet),(x,\bullet'))\right)\right\rVert_Z \ ds \ dt \\
&\ + \frac{1}{\alpha^2} \int_{x-\alpha/2}^{x+\alpha/2}  \int_{x-\alpha/2}^{x+\alpha/2}  \left\lVert \Delta\left(h((s,\bullet),(x,\bullet'))\right)- \Delta\left(h((x,\bullet),(x,\bullet'))\right)\right\rVert_Z  \ ds \ dt \\
&\le \frac{\left\lVert C \right\rVert \left\lVert B \right\rVert_{\operatorname{HS}}}{\alpha^2} \int_{x-\alpha/2}^{x+\alpha/2}  \int_{x-\alpha/2}^{x+\alpha/2} \left\lVert \Phi(s)\right\rVert_{L^2(\Omega, \mathcal{L}(X))}\left\lVert \Phi(t)-\Phi(x) \right\rVert_{L^2(\Omega, \mathcal{L}(X))} \ ds \ dt \\
&\ + \frac{\left\lVert C \right\rVert \left\lVert B \right\rVert_{\operatorname{HS}}}{\alpha^2} \int_{x-\alpha/2}^{x+\alpha/2}  \int_{x-\alpha/2}^{x+\alpha/2}  \left\lVert \Phi(s)-\Phi(x) \right\rVert_{L^2(\Omega, \mathcal{L}(X))} \left\lVert \Phi(x) \right\rVert_{L^2(\Omega, \mathcal{L}(X))}   \ ds \ dt \\
& + \frac{\left\lVert \widetilde{C} \right\rVert \left\lVert \widetilde{B}  \right\rVert_{\operatorname{HS}}}{\alpha^2} \int_{x-\alpha/2}^{x+\alpha/2}  \int_{x-\alpha/2}^{x+\alpha/2} \left\lVert \widetilde{\Phi(s)} \right\rVert_{L^2(\Omega, \mathcal{L}(X))}\left\lVert \widetilde{\Phi(t)}-\widetilde{\Phi(x)} \right\rVert_{L^2(\Omega, \mathcal{L}(X))} \ ds \ dt \\
&\ + \frac{\left\lVert \widetilde{C} \right\rVert \left\lVert \widetilde{B} \right\rVert_{\operatorname{HS}}}{\alpha^2} \int_{x-\alpha/2}^{x+\alpha/2}  \int_{x-\alpha/2}^{x+\alpha/2}  \left\lVert \widetilde{\Phi(s)}-\widetilde{\Phi(x)} \right\rVert_{L^2(\Omega, \mathcal{L}(X))} \left\lVert\widetilde{\Phi(x)}\right\rVert_{L^2(\Omega, \mathcal{L}(X))}   \ ds \ dt.
\end{split}
\end{equation*}

Consider then the family of intervals $I_x:=[x-\operatorname{min} \left(\delta_x, \gamma_x\right),x+\operatorname{min} \left(\delta_x, \gamma_x\right)]$ for $x \in I\cap J.$ Lebesgue's covering theorem \cite[Theroem $26$]{L10} states that, after possibly shrinking the diameter of the sets $I_x$ first, there exists an at most countably infinite family of disjoint sets $(I_{x_i})_{i \in \mathbb N}$ covering $I \cap J$ such that the Lebesgue measure of $I \cap J \cap \left( \bigcup_{i \in \mathbb N} I_{x_i} \right)^{C}$ is zero. Using additivity of the Lebesgue measure, there are for every $\varepsilon>0$ finitely many points $x_1,..,x_n \in I \cap J$ such that the set $I\cap J \cap \left( \bigcup_{i=1}^n I_{x_i} \right)^{C}$ has Lebesgue measure at most $\varepsilon$ . Thus, we have obtained finitely many disjoint sets $I_{x_i}$ of total measure $M-\varepsilon$ such that for $0<\alpha_i  /2 \le \operatorname{diam}(I_{x_i})/2$ both estimates \eqref{eq: uniformly} and \eqref{eq:epsM} hold at $x=x_i$ where $x_i$ is the midpoint of $I_{x_i}.$

For every $i \in \left\{1,..,n \right\}$ fixed, we introduce the family of sesquilinear forms $(L_{i})$
\begin{equation*} 
\begin{split}
\label{form2}
&L_{i}:  L^2\left(\Omega,\mathbb{R}^{m}\right) \oplus L^2 \left(\Omega,\mathbb{R}^{n}\right) \rightarrow \mathbb{R}  \\
& (f,g) \mapsto \int_{\Omega^2} \left\langle f(\omega),\Delta(h((x_i,\omega),(x_i,\omega'))) g(\omega') \right\rangle_{\mathbb{R}^{m}} \ d\mathbb P(\omega) \ d\mathbb P(\omega')
\end{split}  
\end{equation*}
 and for $Z:= L^2\left(\Omega,\mathbb{R}^{m}\right)\otimes L^2 \left(\Omega,\mathbb{R}^{n}\right)$ we can define a Hilbert-Schmidt operator of unit $\operatorname{HS}$-norm given by $Q_i : L^2 \left(\Omega,\mathbb{R}^{n}\right)  \rightarrow L^2\left(\Omega,\mathbb{R}^{m}\right)$
\begin{equation*}
\begin{split}
&(Q_i\varphi)(\omega):=\int_{\Omega} \frac{\Delta(h((x_i,\omega),(x_i,\omega')))}{\left\lVert \Delta(h((x_i,\bullet),(x_i,\bullet')))\right\rVert_{Z}} \varphi (\omega') \ d\mathbb P(\omega').
\end{split}
\end{equation*}
Doing a singular value decomposition of $Q_i$ yields orthonormal systems $f_{k,i} \in L^2\left(\Omega,\mathbb{R}^{m}\right), \ g_{k,i} \in L^2 \left(\Omega,\mathbb{R}^{n}\right)$ as well as singular values $\sigma_{k,i} \in [0,1]$ parametrized by $k \in \mathbb N.$ For any $\delta>0$ given there is $N(\delta)$ large enough such that
\[ \left\lVert \frac{\Delta(h((x_i,\bullet),(x_i,\bullet')))}{\left\lVert \Delta(h((x_i,\bullet),(x_i,\bullet')))\right\rVert_{Z}} - \sum_{k=1}^{N(\delta)} \sigma_{k,i} (   f_{k,i} \otimes g_{k,i} )\right\rVert_{Z}<\delta. \]

Thus, there are also $f_{k,i}\in L^2\left(\Omega,\mathbb{R}^{m}\right)$ and $g_{k,i}  \in L^2 \left(\Omega,\mathbb{R}^{n}\right)$ orthonormalized, $N_i \in \mathbb N,$ and $\sigma_{k,i} \in [0,1]$ such that 
\begin{equation}
\begin{split}
\label{eq: functionalbound2}
&\left\lvert \left\langle  \frac{\Delta(h((x_i,\bullet),(x_i,\bullet')))}{\left\lVert \Delta(h((x_i,\bullet),(x_i,\bullet')))\right\rVert_{Z}} -\sum_{k=1}^{N_i} \sigma_{k,i} ( f_{k,i}  \otimes  g_{k,i} ), \Delta(h((x_i,\bullet),(x_i,\bullet'))) \right\rangle_{Z} \right\rvert \\
&=\left\lvert \left\lVert \Delta(h((x_i,\bullet),(x_i,\bullet')))\right\rVert_{Z} -\sum_{k=1}^{N_i} \sigma_{k,i} L_{i}(f_{k,i},g_{k,i}) \right\rvert <\varepsilon / M. 
\end{split}
\end{equation}

Then, $s_{k,i}(s,\omega):= \tfrac{g_{k,i}(\omega)\indic_{I_{x_i}}(s)}{\sqrt{\left\lvert I_{x_i} \right\rvert}} $ and $t_{k,i}(s,\omega):= \tfrac{f_{k,i}(\omega) \indic_{I_{x_i}}(s)}{\sqrt{\left\lvert I_{x_i} \right\rvert}}  $ form orthonormal systems in $L^2\left(\Omega_{(0,\infty)},\mathbb{R}^n\right)$ and $L^2\left(\Omega_{(0,\infty)},\mathbb{R}^{m}\right)$ respectively, both in $k$ and $i$, such that for $\mathcal I_i:=\Omega_{I_{x_i}}\times \Omega_{I_{x_i}}$
it follows that
\begin{equation} 
\begin{split} 
\label{eq:char2}
&\langle t_{k,i}, \Delta( H) s_{k,i} \rangle_{L^2\left(\Omega_{(0,\infty)},\mathbb{R}^{m}\right)} \\
&=\tfrac{1}{\left\lvert I_{x_i} \right\rvert}\int_{\mathcal I_{i}} \left\langle f_{k,i}(\omega),\Delta(h((s,\omega),(t,\omega')))g_{k,i}(\omega')\right\rangle_{\mathbb{R}^{m}} \ dt \ ds \ d\mathbb P(\omega) \ d \mathbb P(\omega').
\end{split} 
\end{equation}
Hence, we get
\begin{equation*} 
\begin{split}
&\left\lvert \sum_{i=1}^{n } \left(\sum_{k=1}^{N_i} \sigma_{k,i} \langle   t_{k,i}, \Delta( H) s_{k,i} \rangle_{L^2\left(\Omega_{(0,\infty)},\mathbb{R}^{m}\right)}  -  \tfrac{1}{\left\lvert I_{x_i} \right\rvert} \int_{I_{x_i}^2}  \left\lVert  \Delta( \overline{h}(2x_i,\bullet) )\right\rVert_{L^2(\Omega, \operatorname{HS}(\mathbb R^n, \mathbb R^m))} \ ds \ dt\right) \right\rvert \\
&\le   \sum_{i=1}^{n} \tfrac{1}{\left\lvert I_{x_i} \right\rvert} \int_{I_{x_i}^2}  \Bigg(\left\lvert \sum_{k=1}^{N_i} \sigma_{k,i} \left\langle   g_{k,i} \otimes f_{k,i}, \left(\Delta\left(h((s,\bullet),(t,\bullet'))\right)-\Delta({h}((x_i,\bullet),(x_i,\bullet')))\right) \right\rangle_Z  \right\rvert \\
&\qquad \qquad \qquad   +\left\lvert  \sum_{k=1}^{N_i} \sigma_{k,i}  L_{i}(f_{k,i},g_{k,i})-\left\lVert \Delta(h((x_i,\bullet),(x_i,\bullet'))) \right\rVert_{Z} \right\rvert \\
& \qquad \qquad \qquad   +\left\lvert \sum_{k=1}^{N_i} \left\lVert  \Delta(h((x_i,\bullet),(x_i,\bullet'))) \right\rVert_{Z}- \left\lVert  \Delta( \overline{h}(2x_i,\bullet)) \right\rVert_{L^2(\Omega, \operatorname{HS}(\mathbb R^n, \mathbb R^m))}\right\rvert \Bigg) \ ds \ dt \ \lesssim  \varepsilon.
\end{split}
\end{equation*}
The bound on the first term follows from \eqref{eq:epsM} and
$\left\lVert \sum_{i=1}^{N_i} \sigma_{k,i}   g_{k,i} \otimes f_{k,i} \right\rVert_{Z} \le 1.$
The bound on the second term follows from \eqref{eq: functionalbound2} and the third term is \eqref{eq:condensation}.
We then compute further that
\begin{equation*}
\begin{split}
&\left\lvert \sum_{i=1}^{n } \left(\tfrac{1}{\left\lvert I_{x_i} \right\rvert} \int_{I_{x_i}^2}  \left\lVert  \Delta( \overline{h}(2x_i,\bullet) )\right\rVert_{L^2(\Omega, \operatorname{HS}(\mathbb R^n, \mathbb R^m))} \ ds \ dt - \tfrac{1}{2} \int_{2I_{x_i} }  \left\lVert  \Delta( \overline{h}(v,\bullet) )\right\rVert_{L^2(\Omega, \operatorname{HS}(\mathbb R^n, \mathbb R^m))}  \ dv \right) \right\rvert \\
&\le \left\lvert \sum_{i=1}^{n } \left(\tfrac{1}{\left\lvert I_{x_i} \right\rvert} \int_{I_{x_i}^2}  \left\lVert  \Delta( \overline{h}(2x_i,\bullet) )\right\rVert_{L^2(\Omega, \operatorname{HS}(\mathbb R^n, \mathbb R^m))} \ ds \ dt - \left\lvert I_{x_i} \right\rvert  \left\lVert  \Delta( \overline{h}(2x_i,\bullet) )\right\rVert_{L^2(\Omega, \operatorname{HS}(\mathbb R^n, \mathbb R^m))}   \right) \right\rvert \\
&+\left\lvert \sum_{i=1}^{n } \left(\left\lvert I_{x_i} \right\rvert  \left\lVert  \Delta( \overline{h}(2x_i,\bullet) )\right\rVert_{L^2(\Omega, \operatorname{HS}(\mathbb R^n, \mathbb R^m))} - \tfrac{1}{2} \int_{2I_{x_i}} \left\lVert \Delta( \overline{h}(v,\bullet) )\right\rVert_{L^2(\Omega, \operatorname{HS}(\mathbb R^n, \mathbb R^m))} \ dv   \right) \right\rvert \lesssim \varepsilon
\end{split}
\end{equation*}
where we used \eqref{eq: uniformly} to obtain the second estimate.
Combining the two preceding estimates, the Theorem follows from the characterization of the trace norm given in \eqref{tracenorm}.
\end{proof}

Next, we study conditions under which convergence of flows implies convergence of stochastic Hankel operators.
Let $(\Phi_i)$ be a sequence of flows converging in $L^2(\Omega_{(0,\infty)},\mathcal L(X))$ to $\Phi$ and $W_i$, $R_i$ the observability and reachability maps derived from $\Phi_i$ as in \eqref{eq:stoobreach}.
For the observability map this yields convergence in operator norm
\[ \left\lVert W-W_i \right\rVert^2=\mathbb E\int_{(0,\infty)} \left\lVert C(\Phi-\Phi_i)(t) \right\rVert_{\mathcal L(X,\mathcal H)}^2 \ dt \xrightarrow[i \rightarrow \infty]{} 0. \] 
If $\mathcal H \simeq \mathbb{R}^m$ then it follows by an analogous estimate that $W_i$ converges to $W$ in Hilbert-Schmidt norm, too \cite[Theorem $6.12$(iii)]{Wei}. 

For the reachability map we choose an ONB $(e_k)_{k \in \mathbb{N}}$ of $L^2(\Omega_{(0,\infty)},\mathbb R)$ which we extend by tensorisation $e_k^j:=e_k \otimes \widehat{e}_j$ for $j \in \left\{1,..,n \right\}$ to an ONB of  $L^2(\Omega_{(0,\infty)},\mathbb R^n).$ 
Using this basis and an orthonormal basis $(f_l)_{l \in \mathbb N}$ of $X$, it follows that
\begin{equation*}
\begin{split}
&\left\lVert R_i-R \right\rVert^2_{\operatorname{HS}(L^2(\Omega_{(0,\infty)},\mathbb R^n), X)}\\
&=\sum_{l \in \mathbb N}\sum_{k \in \mathbb{N}} \sum_{j=1}^n \left\lvert \int_{\Omega_{(0,\infty)}}\left\langle f_l, ( \Phi- \Phi_i)(t)(\omega)\psi_j \right \rangle_X e_k(t)(\omega)   \ dt \  d\mathbb P(\omega) \right\rvert^2 \\
&=\sum_{l \in \mathbb N} \sum_{j=1}^n \int_{\Omega_{(0,\infty)}} \left\lvert \left\langle f_l, ( \Phi- \Phi_i)(t)(\omega)\psi_j\right\rangle_X \right\rvert^2 \ dt  \ d\mathbb P(\omega)\\
&=\sum_{j=1}^n \int_{\Omega_{(0,\infty)}}  \left\lVert  ( \Phi- \Phi_i)(t)(\omega)\psi_j \right\rVert^2_X \ dt  \ d\mathbb P(\omega)  \xrightarrow[i \rightarrow \infty]{} 0.
\end{split}
\end{equation*}
As in the bilinear case, we obtain from this a convergence result for stochastic Hankel operators:
\begin{corr}
Let $H_i$ denote the Hankel operators associated with flows $\Phi_i$ converging in $L^2(\Omega_{(0,\infty)},\mathcal L(X))$ to $\Phi.$ Then, the $H_i$ converge in Hilbert-Schmidt norm to $H$ 
\[ \left\lVert H_i - H \right\rVert_{\operatorname{HS}} \le \left\lVert W_i-W \right\rVert \left\lVert R_i \right\rVert_{\operatorname{HS}} + \left\lVert W \right\rVert \left\lVert R_i- R \right\rVert_{\operatorname{HS}}\xrightarrow[i \rightarrow \infty]{} 0\]
and if $\mathcal H \simeq \mathbb{R}^m$ then the convergence is also in the sense of trace class operators
\[ \left\lVert H_i - H \right\rVert_{\operatorname{TC}} \le \left\lVert W_i-W \right\rVert_{\operatorname{HS}} \left\lVert R_i \right\rVert_{\operatorname{HS}} + \left\lVert W \right\rVert_{\operatorname{HS}} \left\lVert R_i- R \right\rVert_{\operatorname{HS}}\xrightarrow[i \rightarrow \infty]{} 0.\]
In particular, all singular values of $H_i$ converge to the singular values of $H$ \cite[Corollary $2.3$]{Krein} and, if the respective singular values non-degenerate, then the singular vectors converge in norm as well (see the proof of Lemma \ref{SVC}).
\end{corr}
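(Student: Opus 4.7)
The plan is to reduce everything to the algebraic identity
\[
H_i - H \;=\; (W_i - W)\,R_i \;+\; W\,(R_i - R),
\]
which expresses the difference of Hankel operators in terms of the differences of the observability and reachability maps whose convergence was just established in the paragraph preceding the statement. All three claims of the corollary will then follow from the triangle inequality applied to this identity together with the standard ideal properties of $\operatorname{HS}$ and $\operatorname{TC}$.

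For the Hilbert--Schmidt bound I would apply the sub-multiplicative inequalities $\lVert AB \rVert_{\operatorname{HS}} \le \lVert A \rVert \, \lVert B \rVert_{\operatorname{HS}}$ and $\lVert AB \rVert_{\operatorname{HS}} \le \lVert A \rVert_{\operatorname{HS}} \, \lVert B \rVert$ summand-wise to obtain exactly the inequality stated. Convergence to zero is then immediate: $\lVert W_i - W \rVert \to 0$ has just been verified, $\lVert R_i - R \rVert_{\operatorname{HS}} \to 0$ likewise, $W$ is bounded, and $\lVert R_i \rVert_{\operatorname{HS}}$ is uniformly bounded in $i$ as a straightforward consequence of the Hilbert--Schmidt convergence of $R_i$.

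For the trace-norm bound, the additional ingredient is Remark \ref{rem:TC}: when $\mathcal H \simeq \mathbb R^m$ the Carleman characterization upgrades $W$ and each $W_i$ to Hilbert--Schmidt operators, and the same reasoning gives $\lVert W_i - W \rVert_{\operatorname{HS}} \to 0$ (as already noted right before the corollary). I then invoke the factorization $\operatorname{HS}\cdot\operatorname{HS} \subset \operatorname{TC}$ with the sharper bound $\lVert AB \rVert_{\operatorname{TC}} \le \lVert A \rVert_{\operatorname{HS}} \lVert B \rVert_{\operatorname{HS}}$ applied to both summands of the identity.

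Singular-value convergence drops out from \cite[Corollary~2.3]{Krein}, exactly as in the proof of Proposition \ref{singularvalueconv}, since Hilbert--Schmidt convergence implies operator-norm convergence. Singular-vector convergence is covered by Lemma \ref{SVC} with $H(m):=H_i$: operator-norm convergence of compact operators to a Hankel operator with non-degenerate singular values is precisely the hypothesis of that lemma. I do not expect a genuine obstacle here; the real content lay in the preceding convergence analysis for $W_i$ and $R_i$, and the corollary is its immediate packaging through operator-ideal inequalities.
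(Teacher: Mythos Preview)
Your proposal is correct and matches the paper's approach exactly: the corollary in the paper is stated with the two displayed inequalities embedded in it and no further proof, so the content is precisely the decomposition $H_i-H=(W_i-W)R_i+W(R_i-R)$ together with the ideal inequalities for $\operatorname{HS}$ and $\operatorname{TC}$ that you spell out. The convergence of $W_i$, $R_i$ (and of $W_i$ in $\operatorname{HS}$ when $\mathcal H\simeq\mathbb R^m$) is established in the paragraph immediately preceding the corollary, and the singular-value and singular-vector claims are handled by the same references you give.
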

To exhibit the connection between the model reduction methods for SPDEs and bilinear systems we finally state a weak version of the stochastic Lyapunov equations for real-valued L\'{e}vy-noise as stated for finite-dimensional systems in \cite[Eq. (14), (22)]{BR15}. Let $(L_t)$ be a square-integrable scalar L\'{e}vy process, then $M_t:=L_t-t \mathbb E(L_1)$ is a square-integrable centred martingale \cite[Theorem $2.7$]{BR15}. Its quadratic variation measure satisfies $d\langle M,M \rangle_t= \mathbb E\left(M_1^2\right) \ dt$. 
Let $(X_s)$ be an $X$-valued, predictable process with $\int_0^T \left\lVert X_s \right\rVert_X^2 d\langle M,M \rangle_s< \infty$ then the stochastic integral is defined by the unconditional convergent series $\int_0^t X_s \ dM_s:=\sum_{k \in \mathbb N} \int_0^t \langle X_s, e_k \rangle \ dM_s \ e_k $ where $(e_k)$ is any ONB of $X$ for $t \in [0,T]$ and the isometry formula
\[ \mathbb E \left\lVert \int_0^t X_s \ dM_s \right\rVert_X^2 = \mathbb E \int_0^t \left\lVert X_s \right\rVert_X^2 \ d\langle M,M \rangle_s  \]
holds \cite[Def. $6$ and Prop. $8$]{T13}. Moreover, from the series representation it follows from one-dimensional theory \cite[Theorem $2.11$]{BR15} that $\int_0^T X_s \ dM_s$ is a martingale and $\mathbb E\int_0^T X_s \ dM_s=0.$

Consider $n$ independent copies of such martingales $(M_t^{(j)})_{j \in \left\{1,\ldots,n \right\}}$ and the control operator $B$ as before.
We then study the stochastic evolution equation
\begin{equation}
\begin{split}
\label{eq: spde2}
dZ_t&= (AZ_t + Bu) \ dt +\sum_{j=1}^{n} N_j Z_t \ dM^{(j)}_t, \quad t >0  \\ 
 Z_{0}&=\xi
\end{split}
\end{equation}
for $\xi \in L^2(\Omega,\mathcal F_0,\mathbb P,X)$, $A$ the generator of a $C_0-$semigroup $(T(t))$, and $N_j \in \mathcal L(X)$. 
Then, the homogeneous part of \eqref{eq: spde2}, i.e. without the control term $Bu$, defines a unique predictable process $Z_t^{\text{hom}}:=\Phi(t)\xi \in \mathcal H_{2}^{(0,T)}$ \cite[Def. 9.11, Theorem $9.15$, Theorem $9.29$]{P15} with flow $\Phi$ that satisfies the homogeneous Markov property \cite[Prop. $9.31$ and $9.32$]{P15} and
\begin{equation}
\label{eq:mildsolpro}
Z_t^{\text{hom}}=T(t)\xi+ \sum_{j=1}^{n} \int_{0}^t T(t-s)N_j Z_s^{\text{hom}} \ dM^{(j)}_s.
\end{equation}
The adjoint equation to \eqref{eq: spde2} shall be defined with initial condition $Y_0 = \xi$ as
\begin{equation*}
\begin{split}
dY_t&= (A^*Y_t+ Bu) \ dt +\sum_{j=1}^{n} N_j^* Y_t \ dM^{(j)}_t , \quad t >0 
\end{split}
\end{equation*}
and the mild solution to the homogeneous part of this equation is
\begin{equation}
\label{eq:mildsolpro2}
Y_t^{\text{hom}}=T(t)^*\xi+ \sum_{j=1}^{n} \int_{0}^t T(t-s)^*N_j^* Y_s^{\text{hom}} \ dM^{(j)}_s.
\end{equation}
Let $\Psi$ be the flow of the adjoint equation such that $Y_t^{\text{hom}}:=\Psi(t)\xi$ then the $X$-adjoint of $\Psi$ satisfies the variation of constant formula
\[ \Psi(t)^*\xi= T(t)\xi+ \sum_{j=1}^{n} \int_{0}^t \Psi(s)^*N_j T(t-s)\xi \ dM^{(j)}_s.\]
For $\Phi$ being an exponentially stable flow in m.s.s. to \eqref{eq:mildsolpro}, we then define another observability gramian for \eqref{eq: spde2} by
\[ \langle x,\mathscr O^{\text{L\'{e}vy}} y \rangle:=\int_0^{\infty} \langle C \Psi(t)^* x,C \Psi(t)^*y \rangle \ dt.\] 
To see that $\mathscr O^{\text{L\'{e}vy}}$ coincides with the standard stochastic observability gramian \eqref{eq:gram} $\mathscr O$, we must show that for all $x \in X:$  $\mathbb E \left\lVert C \Phi(t)x \right\rVert_{\mathcal H}^2 = \mathbb E\left\lVert C \Psi(t)^*x \right\rVert_{\mathcal H}^2.$
Applying It\={o}'s isometry we obtain from \eqref{eq:mildsolpro} using sets $\Delta_k(t):=\{(s_1,...,s_k) \in \mathbb{R}^k; 0 \le s_k \le ...\le s_1 \le t\}$
\begin{equation*}
\begin{split}
& \mathbb E  \left\lVert C \Phi(t)x \right\rVert_{\mathcal H}^2 = \left\lVert  CT(t) x \right\rVert_{\mathcal H}^2+ \sum_{i=1}^n \mathbb E\left(M^{(i)}(1)^2\right) \ \mathbb E \int_0^t \left\lVert C T(t-s_1)N_{i} \Phi(s_1) x \right\rVert_{\mathcal H}^2 ds_1 \\
&=\left\lVert  CT(t) x \right\rVert_{\mathcal H}^2 + \sum_{k=1}^{\infty} \sum_{i_1,..,i_k=1}^n \prod_{j=1}^k \mathbb E\left(M^{(i_j)}(1)^2\right) \cdot\\
& \qquad \qquad \qquad \qquad \qquad \qquad  \cdot \int_{\Delta_k(t)} \left\lVert C T(t-s_1) \prod_{j=1}^{k-1}\left(N_{i_j} T(s_j-s_{j+1}) \right) N_{i_k}T(s_{k}) x \right\rVert_{\mathcal H}^2 ds  
   \end{split}
 \end{equation*}
 whereas it follows from \eqref{eq:mildsolpro2}
 \begin{equation*}
\begin{split}
&\mathbb E  \left\lVert C \Psi(t)^*x \right\rVert_{\mathcal H}^2= \left\lVert  CT(t) x \right\rVert_{\mathcal H}^2+ \sum_{i=1}^n \mathbb E \left(M^{(i)}(1)^2\right) \ \mathbb E \int_0^t \left\lVert C \Psi(s_1)^*N_i T(t-s_1) x \right\rVert_{\mathcal H}^2 ds_1 \\
&=\left\lVert  CT(t) x \right\rVert_{\mathcal H}^2 + \sum_{k=1}^{\infty} \sum_{i_1,..,i_k=1}^n \prod_{j=1}^k \mathbb E\left(M^{(i_j)}(1)^2\right) \cdot \\
& \qquad \qquad \qquad \qquad \qquad \qquad \cdot \int_{\Delta_k(t)}  \left\lVert C T(s_{k}) \prod_{j=k-1}^{1} \left(N_{i_{j+1}} T(s_{j}-s_{j+1}) \right) N_{i_1}T(t-s_1) x \right\rVert_{\mathcal H}^2 ds. 
 \end{split}
 \end{equation*}
An inflection of the integration domain shows then that both expressions (and hence the gramians) coincide.

Finally, the gramians satisfy the following Lyapunov equations for scalar L\'{e}vy-type noise (cf.~\cite{BR15} for the finite dimensional analogue):
\begin{lemm}
Let $\Phi$ be an exponentially stable flow in m.s.s. to \eqref{eq:mildsolpro} such that both gramians exist. Let $x_1,y_1 \in D(A^*)$ and $x_2,y_2 \in D(A),$ then 
\begin{equation*}
\begin{split}
&\langle x_1, BB^* y_1 \rangle + \langle A^*x_1, \mathscr P y_1 \rangle + \langle x_1, \mathscr P A^*y_1 \rangle +\sum_{j=1}^n \langle N_j^*x_1, \mathscr P N_j^*y_1 \rangle \ \mathbb{E}(M^{(j)}(1)^2) =0 \text{ and } \\
&\langle x_2, C^*C y_2 \rangle + \langle Ax_2, \mathscr O y_2 \rangle + \langle x_2, \mathscr O Ay_2 \rangle +\sum_{j=1}^n \langle N_jx_2, \mathscr O N_jy_2 \rangle \ \mathbb{E}(M^{(j)}(1)^2) =0.
\end{split}
\end{equation*}
\end{lemm}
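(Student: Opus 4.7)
The plan is to reduce both Lyapunov identities to the bilinear Lyapunov equations \eqref{Lyap1} already proved in Section~\ref{sec:bilinearBT}, by absorbing the scalar L\'evy variances into rescaled noise operators $\tilde N_j := \sqrt{\mathbb E(M^{(j)}(1)^2)}\,N_j$. The key observation is that iterating the mild solution \eqref{eq:mildsolpro}, using the L\'evy It\^o isometry $d\langle M^{(j)}, M^{(j)}\rangle_t = \mathbb E(M^{(j)}(1)^2)\,dt$, and invoking independence of the $M^{(j)}$ to kill cross-covariations yields exactly the expansion
\begin{equation*}
\mathbb E\|C\Phi(t)x\|_{\mathcal H}^2 = \|CT(t)x\|_{\mathcal H}^2 + \sum_{k\ge 1}\sum_{i_1,\ldots,i_k=1}^n \prod_{j=1}^k \mathbb E(M^{(i_j)}(1)^2) \int_{\Delta_k(t)} \|CT(t-s_1)N_{i_1}T(s_1-s_2)\cdots N_{i_k}T(s_k)x\|_{\mathcal H}^2\,ds
\end{equation*}
that the paper already spells out just before the lemma. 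An entirely analogous expansion of $\mathbb E\|B^*\Phi(t)^*x\|^2$ follows by iterating the variation-of-constants relation $\Phi(t)^*x = T(t)^*x + \sum_j \int_0^t \Phi(s)^*N_j^* T(t-s)^*x\,dM^{(j)}_s$, obtained by taking the $X$-adjoint of \eqref{eq:mildsolpro}.

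Next I would integrate both series over $t\in(0,\infty)$ and apply the change of variables $t_1=t-s_1$, $t_j=s_{j-1}-s_j$ for $2\le j\le k$, $t_{k+1}=s_k$, which maps $\bigcup_{t>0}\Delta_k(t)$ bijectively onto $(0,\infty)^{k+1}$; pulling the scalar variance factors inside the norms then identifies the resulting series for $\langle x,\mathscr O x\rangle$ and $\langle x,\mathscr P x\rangle$ term by term with the bilinear observability and reachability gramians of Definition~\ref{gramians} for the deterministic bilinear system $(A,\tilde N_1,\ldots,\tilde N_n,B,C)$. Exponential stability in m.s.s., together with the semigroup estimate $\|T(t)\|\le Me^{-\nu t}$, supplies the summability required for the bilinear Lyapunov equations to apply to this rescaled system.

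Finally, plugging $\tilde N_j$ into \eqref{Lyap1} and unfolding $\langle \mathscr O\tilde N_j x,\tilde N_j y\rangle = \mathbb E(M^{(j)}(1)^2)\langle \mathscr O N_j x, N_j y\rangle$ (and the reachability analogue with $N_j^*$), then using self-adjointness of $\mathscr O$ and $\mathscr P$ to move $A$ and $A^*$ between the two arguments of the inner product, reproduces exactly the two claimed identities.

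The hard part is justifying the iterated It\^o expansion and the Fubini/change-of-variables interchanges on the infinite-dimensional side. Absolute convergence of the double series follows from combining \eqref{eq: mss} with $\|T(t)\|\le Me^{-\nu t}$, which dominates every iterated integral uniformly and legitimises all interchanges by dominated convergence; the very same calculation appears implicitly in the paper's own derivation of $\mathscr O^{\text{L\'evy}} = \mathscr O$ immediately preceding the lemma, so one essentially recycles it in both the observability and reachability settings.
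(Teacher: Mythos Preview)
Your approach is correct but takes a genuinely different route from the paper. The paper argues directly via stochastic calculus: it starts from the weak formulation of the mild solution, applies stochastic integration by parts to the product $\langle \Phi(t)\psi_i,x_1\rangle\langle \Phi(t)\psi_i,y_1\rangle$, reads off the quadratic covariation from $d\langle M^{(j)},M^{(j)}\rangle_t=\mathbb E(M^{(j)}(1)^2)\,dt$, takes expectations using the martingale property of the stochastic integral, and then lets $t\to\infty$ using exponential stability in m.s.s. The observability identity is handled analogously through the adjoint flow $\Psi$ and initial data $\sqrt{C^*C}\,x_0$.

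Your reduction to the deterministic bilinear Lyapunov equations \eqref{Lyap1} via the rescaling $\tilde N_j=\sqrt{\mathbb E(M^{(j)}(1)^2)}\,N_j$ is an attractive shortcut because it makes the correspondence between the stochastic and bilinear frameworks completely explicit and recycles the Section~\ref{sec:bilinearBT} machinery wholesale. Two points deserve sharper wording, however. First, the pointwise identity in the It\^o expansion (equality, not just the obvious inequality $\le$) follows because the $K$-th remainder, after bounding the flow by m.s.s.\ stability, picks up the simplex volume $|\Delta_K(t)|=t^K/K!$, which forces it to zero for every fixed $t$; you should say this rather than only invoking dominated convergence. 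Second, \eqref{Lyap1} is stated under Assumption~\ref{generalass}, i.e.\ the geometric condition $M^2\Gamma^2/(2\nu)<1$, which is \emph{not} what m.s.s.\ stability gives you. What you actually obtain from the hypothesis ``both gramians exist'' (via your identification) is weak convergence of the bilinear series $\sum_k\mathscr O_k$ and $\sum_k\mathscr P_k$; you should observe that the \emph{proof} of \eqref{Lyap1} only needs this weak convergence (a Cauchy--Schwarz argument on the positive forms $\mathscr O_k$ handles the cross-terms $\langle \mathscr O_k Ax,y\rangle$), not the specific rate in Assumption~\ref{generalass}. With these two clarifications your argument is complete; the paper's direct It\^o--product-rule proof avoids both issues but at the cost of redoing the telescoping computation from scratch.
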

\begin{proof}
For every $i \in \left\{1,...,n \right\}$ there is a weak formulation of the homogeneous solution to \eqref{eq: spde2} \cite[Theorem $9.15$]{P15}
\begin{equation*}
\begin{split}
\langle \Phi(t)\psi_i,x_1 \rangle&= \langle \psi_i,x_1 \rangle + \int_0^t \langle  \Phi(s) \psi_i,A^*x_1 \rangle \ ds + \sum_{j=1}^n \int_0^t \langle \Phi(s)\psi_i, N_j^*x_1  \rangle  \ dM^{(j)}_s.
\end{split}
\end{equation*}
Stochastic integration by parts yields after summing over $i \in \left\{1,..,n \right\}$
\begin{equation*}
\begin{split}
&\langle \Phi(t)^*x_1,BB^*  \Phi(t)^*y_1 \rangle 
= \langle x_1, BB^*y_1 \rangle +  \sum_{i=1}^n \int_0^t \langle \Phi(s) \psi_i,x_1 \rangle_{-} \ d\langle \Phi(s) \psi_i,y_1 \rangle \\
 &+ \sum_{i=1}^n \int_0^t \langle \Phi(s) \psi_i,y_1 \rangle_{-} \ d\langle \Phi(s)\psi_i,x_1 \rangle 
 + \sum_{i=1}^n \langle \langle x_1,\Phi(t) \psi_i \rangle, \langle \Phi(t)\psi_i ,y_1 \rangle \rangle_t
\end{split}
\end{equation*}
where the subscript $-$ indicates left-limits.

From the quadratic variation process \cite[Eq.(8)]{BR15}
 \[ \sum_{i=1}^{n}\mathbb E \langle \langle x_1,\Phi(t) \psi_i \rangle \rangle_t=\sum_{j=1}^{n}\mathbb E \left(\int_0^t  \langle \Phi(s)^*  N_j^*x_1,BB^*\Phi(s)^* N_j^*x_1  \rangle \ ds  \right) \mathbb E(M^{(j)}(1)^2)\]
we obtain together with the martingale property of the stochastic integral
\begin{equation*}
\begin{split}
\mathbb{E} \left(\langle \Phi(t)^*x_1,BB^*  \Phi(t)^*y_1 \rangle   \right)
= & \langle x_1, BB^*y_1 \rangle +\mathbb E \left(\int_0^t  \langle \Phi(s)^* A^*x_1,BB^*\Phi(s)^* y_1  \rangle \ ds \right) \\
&+ \mathbb E \left(\int_0^t \langle \Phi(s)^* x_1,BB^*\Phi(s)^* A^*y_1 \rangle \ ds  \right)   \\
&+\sum_{j=1}^{n}\mathbb E \left(\int_0^t  \langle \Phi(s)^*  N_j^*x_1,BB^*\Phi(s)^* N_j^*y_1  \rangle \ ds  \right)\mathbb E\left(M^{(j)}(1)^2\right).
\end{split}
\end{equation*} 
Letting $t$ tend to infinity, we obtain the first Lyapunov equation as by exponential stability
$\lim_{t \rightarrow \infty}\mathbb{E} \left(\left\langle x_1, \Phi(t) \psi_i \right\rangle \left\langle  \Phi(t)\psi_i,y_1 \right\rangle  \right)=0.$

The second Lyapunov equation can be obtained by an analogous calculation:
Let $x_0 \in X$ be arbitrary, then we study the evolution for initial conditions $\sqrt{C^*C}x_0$ in the weak sense of the adjoint flow
\begin{equation*}
\begin{split}
\left\langle  \Psi(t) \sqrt{C^*C}x_0,x_2 \right\rangle= &\left\langle \sqrt{C^*C}x_0,x_2 \right\rangle + \int_0^t \left\langle \Psi(s) \sqrt{C^*C}x_0, Ax_2  \right\rangle \ ds \\ 
&+\sum_{j=1}^n \int_0^t \left\langle \Psi(s) \sqrt{C^*C}x_0, N_jx_2  \right\rangle \ dM^{(j)}_s. 
\end{split}
\end{equation*}
Proceeding as before, stochastic integration by parts yields
\begin{equation*}
\begin{split}
\mathbb{E} &\left(\left\langle x_2, \Psi(t) \sqrt{C^*C}x_0 \right\rangle\left\langle  \Psi(t) \sqrt{C^*C}x_0 ,y_2 \right\rangle  \right)
=  \left\langle x_2, \sqrt{C^*C}x_0\right\rangle\left\langle \sqrt{C^*C} x_0,y_2 \right\rangle \\
&+\mathbb E \left(\int_0^t  \left\langle \sqrt{C^*C}\Psi(s)^*Ax_2,(x_0 \otimes x_0) \sqrt{C^*C}\Psi(s)^*y_2  \right\rangle ds  \right) \\
&+ \mathbb E \left(\int_0^t \left\langle \sqrt{C^*C}\Psi(s)^* x_2,(x_0 \otimes x_0)\sqrt{C^*C}\Psi(s)^*Ay_2 \right\rangle ds  \right)   \\
&+\sum_{j=1}^n \mathbb E \left(\int_0^t  \left\langle \sqrt{C^*C}\Psi(s)^*  N_jx_2,(x_0 \otimes x_0)\sqrt{C^*C}\Psi(s)^* N_jy_2 \right\rangle ds  \right)\mathbb E(M^{(j)}(1)^2).
\end{split}
\end{equation*} 
Using Parseval's identity, i.e. summing over an orthonormal basis replacing $x_0$, yields after taking the limit $t \rightarrow \infty$ the second Lyapunov equation.
\end{proof}
\appendix
\section{Volterra series representation}
\begin{lemm}
\label{Voltconv}
Consider controls $\left\lVert u \right\rVert_{L^2((0,\infty),(\mathbb R^n,\left\lVert \bullet \right\rVert_{\infty}))}  < \frac{\sqrt{2\nu }}{M \Xi}$, an exponentially stable $C_0$-semigroup  $\|T(t)\|\le Me^{-\nu t}$ with $\nu >0,$ and $\Xi:=\sum_{i=1}^{n}\|N_{i}\|$. Then for such control functions the Volterra series
\begin{equation}
\label{eq: volterraseries}
\zeta(t):=\sum_{m=0}^{\infty}\zeta_m(t)
\end{equation}
defined recursively by
\begin{equation*}
 \begin{split}
\zeta_0(t)&:=T(t)\varphi_0, \quad
\zeta_1(t):=\int_0^t T(t-s)\left(\sum_{i=1}^n u_i (s) N_i \zeta_0(s)  +Bu(s) \right) \ ds  \\
\zeta_k(t)&:=\int_0^t T(t-s)\sum_{i=1}^n u_i (s) N_i  \zeta_{k-1}(s) \ ds \text{ for }k \ge 2
 \end{split}
\end{equation*}
converges uniformly on $(0,\infty)$ and the mild solution is given by the Volterra series $\zeta$ \eqref{eq: volterraseries}. 
\end{lemm}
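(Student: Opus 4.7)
The plan is to prove uniform convergence of the Volterra series by a geometric-type estimate, and then to verify that the limit satisfies the mild-solution identity \eqref{mild solution}, after which uniqueness of the mild solution (from \cite[Proposition 5.3]{LiYo}) finishes the argument.

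First I will establish a uniform bound $A_k := \sup_{t\ge 0}\|\zeta_k(t)\|_X$ that decays geometrically. For the base cases, $\|\zeta_0(t)\|_X\le M\|\varphi_0\|_X$ uses exponential stability directly. For $\zeta_1$, I split the two summands: the term involving $Bu$ is bounded by $M\|B\|\int_0^t e^{-\nu(t-s)}\|u(s)\|_{\mathbb R^n}\,ds$, and applying Cauchy--Schwarz in $s$ to the exponential against $\|u(s)\|_{\mathbb R^n}\le\sqrt n\|u(s)\|_\infty$ gives a uniform bound $\le \tfrac{M\|B\|\sqrt n}{\sqrt{2\nu}}\|u\|_{L^2(\|\cdot\|_\infty)}$. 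The $N_i\zeta_0$ piece is handled by the generic recursive step next.

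The key recursion is the estimate, valid for $k\ge 2$ and inherited for the second part of $\zeta_1$,
\begin{equation*}
\|\zeta_k(t)\|_X \;\le\; M\,\Xi \int_0^t e^{-\nu(t-s)}\|u(s)\|_\infty\,\|\zeta_{k-1}(s)\|_X\,ds \;\le\; \underbrace{\frac{M\,\Xi\,\|u\|_{L^2(\|\cdot\|_\infty)}}{\sqrt{2\nu}}}_{=:r<1}\,A_{k-1},
\end{equation*}
where I used $\|\sum_i u_i(s) N_i\|\le \Xi\|u(s)\|_\infty$ and Cauchy--Schwarz. By hypothesis $r<1$, so $A_k\le r^{k-1}A_1$, and $\sum_k A_k<\infty$; hence $\zeta(t)=\sum_m\zeta_m(t)$ converges in $X$ uniformly in $t\in(0,\infty)$, in particular in $C_b([0,\infty);X)$.

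Finally I verify the mild-solution identity. Summing the defining recursions over $m\ge 0$ and exchanging sum and Bochner integral (justified by the uniform geometric majorant just obtained, together with $s\mapsto e^{-\nu(t-s)}\|u(s)\|_\infty\in L^1(0,t)$), I get
\begin{equation*}
\zeta(t)=T(t)\varphi_0 + \int_0^t T(t-s)Bu(s)\,ds + \int_0^t T(t-s)\sum_{i=1}^n u_i(s)N_i\,\zeta(s)\,ds,
\end{equation*}
which is precisely \eqref{mild solution}. Since the mild solution is unique, $\zeta=\varphi$. The main obstacle is really only the clean bookkeeping that separates the inhomogeneous $Bu$-contribution in $\zeta_1$ from the homogeneous recursion; once that is isolated, everything reduces to the single contraction factor $r<1$ coming from the assumption $M^2\Gamma^2(2\nu)^{-1}<1$ combined with the $L^2$-bound on the controls.
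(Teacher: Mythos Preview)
Your proof is correct and follows essentially the same approach as the paper: a geometric recursive estimate on $\sup_t\|\zeta_k(t)\|$ via Cauchy--Schwarz against the exponential, giving the contraction factor $r=M\Xi\|u\|_{L^2}/\sqrt{2\nu}<1$, followed by checking that the limit satisfies \eqref{mild solution}. One small slip in your closing remark: the contraction factor comes directly from the hypothesis $\|u\|_{L^2}<\sqrt{2\nu}/(M\Xi)$ on the controls, not from the gramian condition $M^2\Gamma^2(2\nu)^{-1}<1$ (note $\Xi=\sum_i\|N_i\|$ versus $\Gamma=\sqrt{\sum_i\|N_iN_i^*\|}$); but this does not affect the argument itself.
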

\begin{proof}
For all $k \ge 2$ we obtain recursively an exponentially decreasing bound 
\begin{equation*} \begin{split}
\left\lVert \zeta_k \right\rVert_{L^{\infty}} 
&\le \sup_{t>0}\int_0^t M e^{-\nu(t-s)} \Xi \left\lVert u(s) \right\rVert_{(\mathbb R^n,\left\lVert \bullet \right\rVert_{\infty})} \ ds \ \left\lVert \zeta_{k-1} \right\rVert_{L^{\infty}}  \\
& \le M \Xi \sqrt{\sup_{t>0} \frac{1-e^{-2 t \nu}}{2 \nu}} \left\lVert u \right\rVert_{L^2((0,\infty),(\mathbb R^n,\left\lVert \bullet \right\rVert_{\infty}))} \left\lVert \zeta_{k-1} \right\rVert_{L^{\infty}} \\
&=  \underbrace{\frac{M \Xi}{\sqrt{2 \nu}} \left\lVert u \right\rVert_{L^2((0,\infty),(\mathbb R^n,\left\lVert \bullet \right\rVert_{\infty}))}}_{<1} \left\lVert \zeta_{k-1} \right\rVert_{L^{\infty}} .
\end{split} \end{equation*}
Thus, \eqref{eq: volterraseries} is an absolutely convergent series. To see that $\zeta$ and the mild solution coincide, it suffices to verify that the Volterra series \eqref{eq: volterraseries} satisfies \eqref{mild solution}.
\end{proof}
\section{The composite error system}
\label{sec:comperrsys}
The construction of an auxiliary error system is well-known, see for example \cite{ZL}, and repeated here to explain how to actually compute the error bounds provided in this article. Consider two systems on Hilbert spaces $X$ and $X_r$ with operators (for simplicity we assume $n=1$)
\begin{equation}
\begin{split}
N \in \mathcal L( X), \quad 
A: D(A)\subset X \rightarrow X, \quad C \in \mathcal L(X , \mathcal H), \quad B \in \mathcal L(\mathbb R ,X), \text{ and }\\
N_r \in \mathcal L( X_r), \quad 
A_r: D(A)\subset X_r \rightarrow X_r, \quad C_r \in \mathcal L(X_r , \mathcal H), \quad B_r \in \mathcal L(\mathbb R ,X_r).
\end{split}
\end{equation}
For instance the system on $X$ can be thought of as the full system and the system on $X_r$ as the reduced system.
One can then define a composite error system on the direct sum of Hilbert spaces $\overline{X} = X \oplus X_r$ with the same input space $\mathbb R$ and output space $\mathcal H$ as the initial systems
\begin{equation}
\begin{split}
\label{eq:compsys}
\overline{C} = (C,-C_r), \quad \overline{N} =\left(\begin{matrix} N & 0 \\ 0 & N_r \end{matrix}\right)\, \quad \overline{A} = \left(\begin{matrix} A && 0 \\ 0 && A_r \end{matrix} \right), \quad  \text{ and } \overline{B}= (B,B_r)^T.
\end{split}
\end{equation}
If the composite system satisfies then the stability assumption needed for the gramians to exist, one can then compute to the above composite error system \eqref{eq:compsys} again an observability and reachability gramian $\overline{\mathscr{O}}$ and $\overline{\mathscr {P}}.$ 
Moreover, to the above system there exists an associated Hankel operator which is precisely the difference of the Hankel operators of the two systems, i.e.
\begin{equation}
\overline{H} = \overline{W}\overline{R}=\left\langle \left(\begin{matrix} W \\ -W_r \end{matrix}\right), \left(\begin{matrix} R \\ R_r \end{matrix} \right) \right\rangle = H - H_r=\Delta (H).
\end{equation}
Let $\sigma_i:=\sqrt{\lambda_i(\overline{\mathscr{O}}\overline{\mathscr{P}})}$ be the Hankel singular values of the composite error system indexed by some $i \in I$, then it follows from the unitary invariance of the trace norm that 
\[ \left\lVert \Delta(H) \right\rVert_{\operatorname{TC}} =\left\lVert \overline{H} \right\rVert_{\operatorname{TC}}= \sum_{i \in I} \sigma_i. \]

\smallsection{Acknowledgements} 
This work was supported by the EPSRC grant EP/L016516/1 for the University of Cambridge CDT, the CCA (S.B.). The authors are grateful to Igor Pontes Duff, Keith Glover, and the anonymous referees for inspiring discussions and very useful comments on the manuscript.


%


\end{document}